\documentclass{amsart}
\usepackage{graphicx}
\usepackage{amsmath}
\usepackage{amsfonts}
\usepackage{amssymb}
\usepackage{enumitem}
\usepackage{ifpdf}
\providecommand{\U}[1]{\protect\rule{.1in}{.1in}}
\newtheorem{theorem}{Theorem}

\newtheorem{corollary}{Corollary}

\newtheorem{definition}{Definition}
\newtheorem{example}{Example}

\newtheorem{lemma}{Lemma}

\newtheorem{proposition}{Proposition}
\newtheorem{remark}{Remark}

\makeatletter \@namedef{subjclassname@2020}{\textup{2020}
Mathematics Subject Classification} \makeatother

\begin{document}
\title[Poincar\'e inequalities and doubling]
{Poincar\'e inequalities, logarithmic growth and doubling}
\author{Joaquim Mart\'{\i}n}
\address{Department of Mathematics, Universitat Aut\`onoma de
Barcelona, 08193 Bellaterra (Barcelona), Spain}
\email{Joaquin.Martin@uab.cat}
\urladdr{https://orcid.org/0000-0002-7467-787X}
\thanks{Partially supported by Grants PID2024-160507NB-I00 and PID2024-155917NB-I00
funded by MCIN/AEI/10.13039/501100011033.} \subjclass[2020]{Primary
30L15; Secondary 46E30, 46E36} \keywords{Poincar\'e inequalities,
doubling measures, rearrangement-invariant spaces, log-concave
measures, Gaussian measures, metric measure spaces}

\begin{abstract}

We investigate when a Poincar\'e inequality forces the underlying
measure to be doubling, with particular attention to the space \(X\)
controlling the oscillation. We identify \(L\log L\) as the sharp
threshold: under a natural local growth condition on the measure,
doubling is forced precisely when \(L\log L(0,1)\not\hookrightarrow
X\). We also obtain a doubling criterion under a weaker geometric
assumption when the oscillation norm is sufficiently stronger than
\(L\log L\).

We show that this threshold is sharp in every fixed Euclidean
dimension, using a dimension-free local \(L\log L\)-Poincar\'e
inequality for log-concave measures and non-doubling Gaussian
examples. We then turn to infinite-dimensional spaces, where
doubling is impossible for the measures under consideration.
Nevertheless, the \(L\log L\)-Poincar\'e inequality continues to
hold, and we establish sharp characterizations for infinite product
models, Gaussian measures on Hilbert spaces under power-type
small-ball estimates, and Wiener measure with the uniform norm.
\end{abstract}

\maketitle
\tableofcontents

\section{Introduction}
\label{sec:introduction}

Throughout the paper, \((\Omega,d,\mu)\) denotes a metric measure
space, where \(\mu\) is a nonatomic Borel measure satisfying
\(0<\mu(B)<\infty\) for every ball \(B\subset\Omega\). All balls are
understood to be open. For a ball \(B=B(x,r)\), we write \(r_B=r\),
\(\sigma B=B(x,\sigma r)\), and \(f_{B,\mu}=\mu(B)^{-1}\int_B
f\,d\mu\).

The doubling condition is one of the basic structural assumptions in
the analysis of metric measure spaces. It requires a constant
\(C_D\geq1\) such that \(\mu(B(x,2r))\leq C_D\mu(B(x,r))\) for every
\(x\in\Omega\) and \(r>0\), and provides many of the standard tools
of analysis on metric spaces (see
\cite{HeinonenKoskela,HajlaszKoskela,FPW,HKST} and the references
therein).

Another basic tool is the family of Poincar\'e inequalities. For
\(1\leq p,q<\infty\), a \((q,p)\)-Poincar\'e inequality takes the
form
\begin{equation}
\label{eq:qp-Poincare}
\left(\frac1{\mu(B)}\int_B|f-f_{B,\mu}|^q\,d\mu\right)^{1/q} \leq C
r_B \left(\frac1{\mu(\sigma B)} \int_{\sigma
B}g^p\,d\mu\right)^{1/p},
\end{equation}
for every ball \(B\), every locally integrable function \(f\), and
every upper gradient \(g\) of \(f\), with constants \(C>0\) and
\(\sigma\geq1\) independent of \(B\), \(f\), and \(g\). Precise
definitions and conventions are given in Section~\ref{sec:pre}.

The interaction between Poincar\'e inequalities and the doubling
condition is fundamental to analysis on metric measure spaces (see,
for example,
\cite{Ha,HajlaszKoskela,Sh,HKST,AH,AlvaradoGorkaHajlasz,FPW,
ErikssonBique,HeikkinenKarak} and the references therein). The
relation works in both directions. On a doubling space, a
\((1,1)\)-Poincar\'e inequality improves to a \((q,1)\)-Poincar\'e
inequality for some \(q>1\) (see
\cite[Theorem~5.1]{HajlaszKoskela}). Conversely, a
\((q,1)\)-Poincar\'e inequality with \(q>1\) forces the measure to
be doubling (see \cite{AH}); related weak Sobolev inequalities also
force doubling (see \cite{KorobenkoMaldonadoRios}). Thus a fixed
power improvement on the oscillation side is sufficient to force
doubling.

It is therefore natural to consider improvements weaker than any
fixed power gain, as in the Orlicz setting studied in
\cite{Korobenko,HeikkinenKarak}. Such improvements also arise in the
regularity theory of infinitely degenerate elliptic equations, where
Orlicz--Sobolev inequalities are used to obtain regularity in
non-doubling geometries. These inequalities involve adapted
gradients and may require replacing the metric radius by a larger
quantity, called a superradius (see \cite{KRSSMemoirs,Korobenko} and
the references therein).

In the Poincar\'e setting, this raises the following question:
\begin{center}
\emph{How much stronger than \(L^1\) must the norm controlling the
oscillation in a Poincar\'e inequality be in order to force the
underlying measure to be doubling? Is there a critical threshold
separating the doubling and non-doubling regimes?}
\end{center}

Logarithmic and iterated logarithmic improvements have already been
studied in the Orlicz setting (see \cite{Korobenko,HeikkinenKarak}).
This raises the question of whether the relevant threshold is
specific to the Orlicz structure, or is instead governed by a more
general property of the target norm. To address this question, we
work with rearrangement-invariant Banach function spaces (r.i.\
spaces for short).\footnote{A Banach function space is rearrangement
invariant if equimeasurable functions have the same norm, that is,
\(\|f\|_X=\|g\|_X\) whenever \(f\) and \(g\) have the same
distribution function.} This class includes many classical function
spaces, such as Lebesgue, Orlicz, Lorentz, Lorentz--Zygmund, and
Marcinkiewicz spaces, together with their logarithmic refinements
(see \cite{BennettSharpley,KreinPetuninSemenov}).

In this setting, the Poincar\'e inequalities considered in this
paper take the following form\footnote{We refer to
Section~\ref{sec:pre} for the relevant notation and further details
on rearrangements and normalized r.i.\ norms.}

\begin{definition}
\label{def:PI} Let \((\Omega,d,\mu)\) be a metric measure space such
that \(0<\mu(B)<\infty\) for every ball \(B\subset\Omega\), and let
\(X\) be an r.i.\ space on \((0,1)\). For every ball \(B\), set
\[
\mu_B(E)=\frac{\mu(E\cap B)}{\mu(B)}.
\]
We denote by \((\,\cdot\,)_{\mu_B}^*\) the decreasing rearrangement
with respect to \(\mu_B\).

\begin{enumerate}
\item[\rm (i)]
We say that \((\Omega,d,\mu)\) supports an \((X,L^1)\)-Poincar\'e
inequality if there exist constants \(C_P>0\) and \(\sigma\geq1\)
such that
\begin{equation}
\label{eq:intro-PI} \big\|(f-f_{B,\mu})_{\mu_B}^*\big\|_X \leq C_P
r_B\frac1{\mu(\sigma B)} \int_{\sigma B}g\,d\mu
\end{equation}
for every ball \(B\), every locally integrable function \(f\), and
every upper gradient \(g\) of \(f\).

\item[\rm (ii)]
We say that \((\Omega,d,\mu)\) supports the testing
\((X,L^1)\)-Poincar\'e inequality if there exist constants \(C_P>0\)
and \(\sigma\geq1\) such that
\begin{equation}
\label{eq:PI-B} \big\|(f-f_{B,\mu})_{\mu_B}^*\big\|_X \leq C_P
r_B\frac1{\mu(B)} \int_{\sigma B}g\,d\mu
\end{equation}
for every ball \(B\), every bounded Lipschitz function \(f\), and
every upper gradient \(g\) of \(f\).
\end{enumerate}
\end{definition}

The \((X,L^1)\)-Poincar\'e inequality (\ref{eq:intro-PI}) implies
the testing \((X,L^1)\)-Poincar\'e inequality (\ref{eq:PI-B}).

At this point, it is useful to notice that doubling itself already
gives a uniform control on the lower local dimension. Indeed, if
\(\mu\) is doubling with constant \(C_D\), then a standard iteration
of the doubling inequality gives a polynomial lower bound for the
measure of small balls (see \cite{HeinonenLectures}). In particular,
\[
\underline d_\mu(x) =\liminf_{r\downarrow0}
\frac{\log\mu(B(x,r))}{\log r} \leq \log_2 C_D \qquad\text{for every
}x\in\Omega.
\]
Thus uniformly bounded lower local dimension is a necessary
consequence of doubling. It is therefore natural to begin with
measures for which this a priori control is available.

The functional side of the problem is encoded by
\[
U_X(t)=t\|\chi_{(0,1/t)}\|_X,\qquad t\geq1.
\]
This is the ratio between the \(X\)-norm and the \(L^1\)-norm of a
characteristic function of a set of measure \(1/t\). Thus
\(U_{L^1}(t)=1\), \(U_{L^q}(t)=t^{1-1/q}\) for \(q>1\), whereas
\[
U_{L\log L}(t)\simeq\log t,\qquad t\to\infty.
\]
Consequently, the behavior of
\[
\frac{U_X(t)}{\log t}
\]
measures the position of \(X\) relative to the \(L\log L\)
endpoint.\footnote{\(A\preceq B\) means \(A\leq CB\), with \(C\)
independent of the variables under consideration, and \(A\simeq B\)
means that both \(A\preceq B\) and \(B\preceq A\).}

Our first main result identifies \(L\log L\) as the exact threshold
for forcing doubling.

\begin{theorem}
\label{thm:finite-sharp} Let \(X\) be an r.i.\ space on \((0,1)\).
The following statements are equivalent:
\begin{enumerate}
\item[\rm (i)]
\(L\log L(0,1)\not\hookrightarrow X\).

\item[\rm (ii)]
Every metric measure space for which \(\underline d_\mu\in
L^\infty(\Omega,\mu)\) and which supports the testing
\((X,L^1)\)-Poincar\'e inequality (\ref{eq:PI-B}) has doubling
measure, with doubling constant depending only on \(C_P\), \(X\),
and \(\|\underline d_\mu\|_{L^\infty(\Omega,\mu)}\).

\item[\rm (iii)]
\[
\sup_{t\geq2}\frac{U_X(t)}{\log t}=\infty.
\]
\end{enumerate}
Moreover, for every fixed \(n\geq1\), these conditions are
equivalent to the following statement: every absolutely continuous
measure \(d\mu=w\,dx\) on \(\mathbb R^n\), equipped with the
Euclidean distance, with \(w\in L^1_{\rm loc}(\mathbb R^n)\) and
\(w>0\) almost everywhere, which satisfies (\ref{eq:PI-B}), is
doubling.
\end{theorem}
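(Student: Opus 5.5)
We prove (i)$\Leftrightarrow$(iii), then (iii)$\Rightarrow$(ii), and finally close the cycle with a Euclidean counterexample, which gives $\neg$(iii)$\Rightarrow\neg$(ii) and also the final statement for each fixed $n$.

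\emph{(i)$\Leftrightarrow$(iii).} This is a statement about r.i.\ spaces on $(0,1)$. Write $\varphi_X(s)=\|\chi_{(0,s)}\|_X$, so $U_X(t)=t\varphi_X(1/t)$. If $\sup_t U_X(t)/\log t<\infty$, then $\varphi_X(s)\preceq s\log(e/s)=\varphi_{L\log L}(s)$. We would use that $L\log L$ coincides with the Lorentz space $\Lambda$ whose fundamental function is $s\log(e/s)$; this holds because the function $s\log(e/s)$ is concave, and $L\log L$ is the minimal r.i.\ space with that fundamental function. Every r.i.\ space $X$ satisfies $\Lambda_{\tilde\varphi_X}\hookrightarrow X$, where $\tilde\varphi_X$ is the least concave majorant of $\varphi_X$. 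Hence $L\log L\hookrightarrow X$, i.e.\ (i) fails. Conversely, an embedding $L\log L\hookrightarrow X$ gives $\varphi_X\preceq\varphi_{L\log L}$ on characteristic functions, which is the negation of (iii).

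\emph{(iii)$\Rightarrow$(ii).} Fix $x$ and $r$, and set $B=B(x,2r)$ and $b=B(x,r)$. Test (\ref{eq:PI-B}) on $B$ with a Lipschitz cutoff $f$ equal to $1$ on $B(x,\rho)$, equal to $0$ outside $B(x,\rho+h)$, and linear in $d(x,\cdot)$ in between; its upper gradient is $g=h^{-1}\chi_{\text{annulus}}$. Suppose $\mu_B(b)=1/t$ is small. Then $f-f_{B,\mu}\geq 1/2$ on a set of $\mu_B$-measure at least $\mu_B(B(x,\rho))$, so the left side is at least $\tfrac12\varphi_X(\mu_B(B(x,\rho)))$. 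The right side is $C_P\,(2r/h)\,\mu(\text{annulus})/\mu(B)$.

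Now split $(0,r)$ into dyadic-type shells and pigeonhole to choose $\rho$ and $h$ with $\mu(\text{annulus})/h$ comparable to $\mu(B(x,\rho))/\rho$, up to a logarithmic loss. The hypothesis $\underline d_\mu\le D$ is what controls the number of shells: it ensures that $\mu(B(x,s))$ does not decay faster than $s^{D+\epsilon}$. From this we expect an inequality of the form $U_X(t)\preceq C_P\,(1+D)\log t$ for $t=\mu(B)/\mu(b)$. Under (iii) this bounds $t$ in terms of $C_P$, $X$ and $D$, which is doubling. The main obstacle is making this pigeonhole quantitative. The liminf in $\underline d_\mu$ is only asymptotic and pointwise a.e., so we need a uniform version. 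We would first try to integrate the annulus inequality in $\rho$, which yields a differential inequality $\varphi_X(m(\rho)/\mu(B))\preceq r\,m'(\rho)/\mu(B)$ for $m(\rho)=\mu(B(x,\rho))$. Integrating this from $\rho=r$ down to small radii forces $m$ to decay too fast, contradicting $\underline d_\mu(y)\le D$ at points $y$ near $x$. The statement for all $x$ would then follow by a density argument.

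\emph{Counterexample: $\neg$(iii)$\Rightarrow\neg$(ii), and the $\mathbb R^n$ statement.} If $L\log L\hookrightarrow X$, it suffices to exhibit a non-doubling $w\,dx$ on $\mathbb R^n$ with $\underline d_\mu\le n$ supporting the $(L\log L,L^1)$ testing inequality. The abstract indicates the choice: a Gaussian, $w=e^{-|x|^2}$, is log-concave and non-doubling, since balls far from the origin of radius $2r$ and $r$ have measure ratio $\sim e^{c|x|r}$. We would invoke the dimension-free local $L\log L$-Poincar\'e inequality for log-concave measures proved later in the paper (restricting a log-concave measure to a convex ball keeps it log-concave). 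The embedding $L\log L\hookrightarrow X$ then transfers that inequality to $X$. This one example serves every $n$, and since $w>0$ it also settles the final equivalence.
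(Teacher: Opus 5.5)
Your (i)\(\Leftrightarrow\)(iii) argument is correct. It is the paper's layer-cake estimate \(\|h\|_X\le\int_0^\infty\phi_X(\lambda_h(t))\,dt\), rephrased through \(\Lambda_{\tilde\phi_X}\hookrightarrow X\) and \(L\log L=\Lambda_{s\log(e/s)}\). The Gaussian counterexample is also correct. For the fixed-\(n\) statement you should add that every admissible \(w\,dx\) has \(\underline d_\mu=n\) a.e.\ by Lebesgue differentiation, so that (ii) applies to it.

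\textbf{The gap is in (iii)\(\Rightarrow\)(ii), on two counts.}

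\emph{First, the final step is a non sequitur.} Grant your target inequality \(U_X(t)\preceq C_P(1+D)\log t\) for \(t=D_x(r)\). Hypothesis (iii) only says \(\sup_t U_X(t)/\log t=\infty\); it does not say \(U_X(t)/\log t\to\infty\). The constraints on \(U_X\) are that it is nondecreasing and \(U_X(t)/t\) is nonincreasing, and any such profile is realized up to equivalence, e.g.\ by a Marcinkiewicz space. So \(U_X\) can rise linearly from \(\log t_k\) to \(k\log t_k\) on \([t_k,kt_k]\), then stay flat until \(t=t_k^k\). Then the supremum is infinite, yet \(U_X(t)\le\log t\) on an unbounded set. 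A single-scale inequality of your type therefore bounds nothing. Your conclusion would need the hypothesis of Theorem~\ref{thm:SB}.

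\emph{Second, the inequality is not reachable by your route.} The bound \(\underline d_\mu(x)\le D\) is a \(\liminf\). It gives \(\mu(B(x,s))\succeq s^{D+\epsilon}\) only along a sequence of radii, not for all small \(s\). Integrating the cutoff estimate in \(\rho\) inside a fixed ball \(B(x,2r)\) gives the differential inequality \(-\psi'\ge U_X(e^{\psi})/(3C_Pr)\) for \(\psi=\log(\mu(B(x,2r))/\mu(B(x,\rho)))\). When \(\int^\infty d\psi/U_X(e^\psi)=\infty\), which (iii) allows, this forces only a bounded amount of decay on \((0,r)\). It gives no rate as \(\rho\downarrow0\) that could contradict the local dimension. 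What it really yields is a two-scale bound of the form \(U_X(D_x(r))\preceq 1+\log D_x(r/2)\).

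\textbf{The missing idea is to use that two-scale inequality as a recurrence and propagate upward through a barrier.} The recurrence is \(U_X(D_x(2R))\le C_0(1+\log D_x(R))\), as in Lemma~\ref{lem:radial}.
\begin{enumerate}
\item By (iii), choose \(T\ge\max\{e,D_0\}\) with \(U_X(T)>2C_0\log T\).
\item Suppose \(D_x(r_0)\le D_0\). The first \(n\) with \(D_x(2^nr_0)>T\) would violate the recurrence. Hence \(D_x(2^nr_0)\le T\) for all \(n\), and so \(D_x(r)\le T^2\) for every \(r\ge r_0\).
\item This works with only a supremum condition because the recurrence links consecutive scales, so the sequence cannot jump over the barrier.
\item The lower local dimension is needed only to supply starting scales. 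If \(\underline d_\mu(x)<s\), then \(\liminf_{r\downarrow0}D_x(r)\le 2^s\); otherwise dyadic iteration gives \(\mu(B(x,r))\preceq r^s\), forcing \(\underline d_\mu(x)\ge s\).
\item Hence \(\sup_r D_x(r)\le M\) on a set of full measure, hence dense. Here \(M\) depends only on \(C_P\), \(X\) and \(\|\underline d_\mu\|_{L^\infty(\Omega,\mu)}\).
\end{enumerate}
Your density step then finishes the proof: with \(\delta=d(x,y)<r/4\) one has \(B(x,2r)\subset B(y,4(r-\delta))\) and \(B(y,r-\delta)\subset B(x,r)\).
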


The dimensional assumption in {\rm (ii)} is automatically satisfied
when the upper Hausdorff dimension
\[
\dim_H^+\mu =\inf\{\dim_H F:F\subset\Omega\text{ Borel}, \
\mu(\Omega\setminus F)=0\}
\]
is finite: in that case \(\underline d_\mu\leq\dim_H^+\mu\) almost
everywhere, and hence on a dense set (see Lemma~\ref{lem:dim}). In
particular, this holds whenever \(\dim_H\Omega<\infty\). The
dimensional assumption concerns the measure rather than the ambient
space. Indeed, \(\dim_H^+\mu\) may be finite even when
\(\dim_H\Omega=\infty\), so the theorem also applies in such
settings.

The endpoint is sharp already in every fixed Euclidean dimension. We
prove a dimension-free local \(L\log L\)-Poincar\'e inequality for
log-concave measures; Gaussian measures then provide non-doubling
examples at the endpoint (see Corollary~\ref{cor:Gaussian}).

Theorem~\ref{thm:finite-sharp} also applies to logarithmic
improvements over \(L\log L\) that are weaker than any fixed
additional logarithmic power. For example, let \(X=L^\Phi(0,1)\) be
the Orlicz space associated with a Young function \(\Phi\)
satisfying
\[
\Phi(t)\simeq t\left(\prod_{j=1}^{k-1}\log^{(j)}t\right)
(\log^{(k)}t)^\alpha, \qquad t\to\infty,
\]
with \(\log^{(1)}t=\log(e+t)\),
\(\log^{(j)}t=\log(e+\log^{(j-1)}t)\) for \(j\geq2\), and the empty
product understood as \(1\). Then
\[
\frac{U_X(t)}{\log t}\to\infty
\]
when \(k=1\) and \(\alpha>1\), and, for every \(k\geq2\), whenever
\(\alpha>0\). In particular, for \(k\geq2\) this includes the range
\(0<\alpha\leq1\).

The range \(\alpha>1\) is covered in
\cite[Theorem~1.19]{HeikkinenKarak} and by our earlier criterion for
r.i.\ spaces (see \cite[Theorem~8 and
Example~15(iii)]{MartinOrtizPAMS}). Thus, for \(k\geq2\),
Theorem~\ref{thm:finite-sharp} adds the range \(0<\alpha\leq1\)
under the stated assumption on the lower local dimension.

The intermediate logarithmic factors are not required either. For
every \(m\geq3\) and \(\alpha>0\), the Young function
\[
\Phi(t)\simeq t\log^{(1)}t\,(\log^{(m)}t)^\alpha, \qquad t\to\infty,
\]
satisfies
\[
\frac{U_{L^\Phi}(t)}{\log t}
\simeq(\log^{(m)}t)^\alpha\longrightarrow\infty.
\]
These examples are not covered by the criteria cited above, but
Theorem~\ref{thm:finite-sharp} applies under the same dimensional
assumption.

The same quotient \(U_X(t)/\log t\) also governs how much a priori
geometric information is needed. The lower local dimension captures
polynomial small-ball behavior. To go beyond this regime, we use the
notion of local order introduced in the framework of scales by
Helfter (see \cite{Helfter}). The lower local order is defined by
\[
\underline{\operatorname{ord}}_{\mathrm{loc}}\mu(x)
=\liminf_{r\downarrow0} \frac{\log\log(1/\mu(B(x,r)))}{\log(1/r)}.
\]
Finite lower local dimension implies lower local order zero, whereas
finite lower local order may still hold when the lower local
dimension is infinite. Thus the latter provides strictly weaker a
priori geometric information.

Our second theorem shows that weaker geometric information is
sufficient when the functional condition is strengthened from
unboundedness of \(U_X(t)/\log t\) to divergence to infinity. Under
this assumption, the testing Poincar\'e inequality makes essential
boundedness of the lower local order equivalent to doubling.
\begin{theorem}
\label{thm:SB} Let \(X\) be an r.i.\ space on \((0,1)\) such that
\(U_X(t)/\log t\to\infty\) as \(t\to\infty\), and let
\((\Omega,d,\mu)\) support the testing \((X,L^1)\)-Poincar\'e
inequality (\ref{eq:PI-B}). The following statements are equivalent:
\begin{enumerate}
\item[\rm (i)] \(\mu\) is doubling.
\item[\rm (ii)]
\(\underline d_\mu\in L^\infty(\Omega,\mu)\).
\item[\rm (iii)]
\(\underline{\operatorname{ord}}_{\rm loc}\mu \in
L^\infty(\Omega,\mu)\).
\end{enumerate}
Whenever these conditions hold, the doubling constant depends only
on \(C_P\) and \(X\).
\end{theorem}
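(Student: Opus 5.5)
The chain of implications will be (i)\(\Rightarrow\)(ii)\(\Rightarrow\)(iii)\(\Rightarrow\)(i). The first two are soft. If \(\mu\) is doubling with constant \(C_D\), the standard iteration gives \(\mu(B(x,r))\ge c\,(r/R)^{\log_2 C_D}\mu(B(x,R))\) for \(r<R\). Hence \(\underline d_\mu(x)\le\log_2 C_D\) everywhere, which is (ii). If \(\underline d_\mu(x)\le D\), then along a sequence \(r_k\downarrow0\) we have \(\log(1/\mu(B(x,r_k)))\le (D+\varepsilon)\log(1/r_k)\). Therefore \(\log\log(1/\mu(B(x,r_k)))\le \log(D+\varepsilon)+\log\log(1/r_k)\), and dividing by \(\log(1/r)\) gives \(\underline{\operatorname{ord}}_{\rm loc}\mu(x)=0\), which is (iii). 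All the work lies in (iii)\(\Rightarrow\)(i), and the doubling constant there must come out independent of the order bound.

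For (iii)\(\Rightarrow\)(i) I would follow the testing argument behind Theorem~\ref{thm:finite-sharp}. Fix a ball \(B=B(x,R)\) and, for \(0<r<R\), test (\ref{eq:PI-B}) on \(B\) with a Lipschitz cutoff \(f\) that equals \(1\) on \(B(x,r)\), vanishes outside \(B(x,2r)\), and has upper gradient \(g=r^{-1}\chi_{B(x,2r)}\).

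The left side is at least a multiple of \(\|\chi_{(0,\mu_B(B(x,r)))}\|_X\), once one handles the subtraction of \(f_{B,\mu}\) when the small ball carries at most half of \(\mu_B\). This quantity equals \(U_X(t)/t\) with \(t=\mu(B)/\mu(B(x,r))\). The right side is \(\le C_P (R/r)\,\mu(B(x,2r))/\mu(B)\). Writing \(a(r)=\mu(B(x,r))/\mu(B)\), this gives
\[
U_X\big(1/a(r)\big)\,a(r)\le C_P\frac{R}{r}\,a(2r).
\]
Since \(U_X(t)\ge \Lambda(t)\log t\) with \(\Lambda\to\infty\), each halving step costs a factor of \(a(2r)/a(r)\), and this factor is controlled by \(\log(1/a(r))\) times a divergent gain.

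The key step is a dichotomy. Iterate along radii \(r_k=2^{-k}R\) in the regime where the ratios \(a(r_{k})/a(r_{k+1})\) are large, say \(\ge M\) for a large \(M\). Then \(\log(1/a(r_k))\) grows at least linearly in \(k\), and the inequality forces \(\Lambda(1/a(r_{k+1}))\log(1/a(r_{k+1}))\preceq 2\,a(r_k)/a(r_{k+1})\). I would try to show that if a single doubling ratio \(\mu(B(x,2r))/\mu(B(x,r))\) exceeds a threshold \(K=K(C_P,X)\), then the inequality propagates this failure to all smaller scales with rapidly increasing ratios. Because \(\Lambda\to\infty\), the ratios grow, and summing their logarithms makes \(\log(1/\mu(B(x,r_k)))\) grow superpolynomially in \(\log(1/r_k)\), indeed like \(2^{ck}\) with \(c\) arbitrarily large. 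That means the local order at \(x\) is infinite, or at least exceeds any prescribed bound.

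To get a contradiction on a set of positive measure, rather than at one point, I would use the same stability as in Theorem~\ref{thm:finite-sharp}: a bad ratio at \((x,r)\) persists for nearby centers \(y\in B(x,r/4)\) with comparable constants. This yields a ball on which \(\underline{\operatorname{ord}}_{\rm loc}\mu\) is arbitrarily large, contradicting \(\underline{\operatorname{ord}}_{\rm loc}\mu\in L^\infty\). Since the threshold \(K\) is produced purely by \(C_P\) and the function \(\Lambda\), the doubling constant depends only on \(C_P\) and \(X\).

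The main obstacle is the propagation step: showing quantitatively that, because \(U_X(t)/\log t\to\infty\) rather than merely being unbounded, one bad scale forces the decay \(\log(1/\mu(B(x,r)))\) to beat every power \(r^{-N}\). Under mere unboundedness one only gets a sequence of bad scales, which suffices against bounded dimension but not against bounded order. I would first try to work in the variable \(s_k=\log(1/a(r_k))\) and derive a recursive inequality \(s_{k+1}\ge s_k+\log\Lambda(e^{s_{k+1}})+\log s_{k+1}-Ck\), then show inductively that \(s_k\ge e^{Nk}\) eventually for every \(N\).
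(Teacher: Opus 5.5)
\textbf{There is a genuine gap, at the propagation step you flag, and the test you propose cannot close it.} Your implications (i)\(\Rightarrow\)(ii)\(\Rightarrow\)(iii) are fine. So is the final passage from pointwise control to all centers via the inclusions \(B(x,2r)\subset B(y,4(r-\delta))\), \(B(y,r-\delta)\subset B(x,r)\).

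The trouble is the cutoff of fixed width \(r\). Its gradient term is \(\mu(B(x,2r))/r\), so each test bounds \(U_X\) of a measure ratio by a measure \emph{ratio}, not by the logarithm of one.

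\begin{itemize}
\item Testing on \(B(x,4r)\) gives only \(D_x(r)\succeq\Lambda(t)\log D_x(2r)\), with \(\Lambda(t)=U_X(t)/\log t\) at \(t=D_x(r)D_x(2r)\). So a huge ratio at scale \(2r\) may be followed by much smaller ratios at smaller scales, and nothing propagates.
\item Your recursion \(s_{k+1}\ge s_k+\log\Lambda(e^{s_{k+1}})+\log s_{k+1}-Ck\) only asks for increments of size \(\log s_{k+1}+\log\Lambda(e^{s_{k+1}})-Ck\). Take \(X=L^\Phi\) with \(\Phi(t)\simeq t\log t\,\log\log t\); then \(\log\Lambda(e^s)=\log\log s+O(1)\), and \(s_k=k^2\) satisfies the recursion for all large \(k\).
\end{itemize}

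That profile, \(\log(1/\mu(B(x,r)))\approx(\log(1/r))^2\), has lower local order \(0\) and unbounded doubling ratios \(D_x(r)\approx r^{-c}\). It is consistent with all your fixed-width radial tests at \(x\). So no induction based on them can give \(s_k\ge e^{Nk}\).

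\textbf{The missing idea is the radial growth inequality of Lemmas~\ref{lem:radial-point} and~\ref{lem:radial}.} Since \(V_x\) is monotone, \(\int_R^{2R}V_x'/V_x\,dr\le\log D_x(R)\). Hence there is \(\rho\in(R,2R)\) where \(V_x\) is differentiable and \(R\,V_x'(\rho)/V_x(\rho)\le\log D_x(R)\). Test (\ref{eq:PI-B}) on \(B(x,4R)\) with a cutoff of width \(\varepsilon\downarrow0\) at radius \(\rho\). The gradient term becomes \(V_x'(\rho)\), and one gets
\[
U_X(D_x(2R))\le C_0\bigl(1+\log D_x(R)\bigr).
\]
The right side is now linear in \(\log D_x\).

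Since \(U_X(t)/\log t\to\infty\), for each \(N\ge1\) there is \(T(N)\) with \(U_X(t)\ge C_0(N+1)\log t\) whenever \(\log t\ge T(N)\). Hence \(\log D_x(r/2)\ge N\log D_x(r)\) whenever \(\log D_x(r)\ge T(N)\).

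Start from one scale with \(\log D_x(r)\ge T_0=T(3)\). The log-ratios then grow geometrically and eventually exceed every \(T(N)\). Telescoping gives \(\log(1/\mu(B(x,s)))\succeq s^{-\log_2N}\) for every \(N\), i.e.\ infinite lower local order (Lemma~\ref{lem:vertical}).

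So \(D_x(r)\le e^{T_0}\) for all \(r\) at every point of the full-measure, hence dense, set where the order is finite. The center-shift argument of Lemma~\ref{lem:propagation} then gives doubling with a constant depending only on \(T_0\), i.e.\ only on \(C_P\) and \(X\).
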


Whenever \(U_X(t)/\log t\) has a limit in \([0,\infty]\), the two
functional conditions coincide. All the iterated logarithmic
examples above satisfy the stronger condition, so
Theorem~\ref{thm:SB} applies to them under the weaker assumption of
finite lower local order on a dense set.

We next turn to infinite-dimensional settings. A doubling measure
with full support forces the underlying metric space to be
geometrically doubling, and hence to have finite Hausdorff dimension
(see \cite{HeinonenLectures}). Thus the infinite-dimensional models
considered below cannot support a doubling measure with full
support. Nevertheless, the \(L\log L\)-Poincar\'e endpoint continues
to hold. We examine three settings in which this phenomenon can be
described sharply.

The first consists of infinite tensor products of one-dimensional
log-concave probability measures, equipped with weighted supremum
metrics. Under the condition on the weights stated in
Theorem~\ref{thm:product} below, the \((X,L^1)\)-Poincar\'e
inequality holds if and only if
\[
L\log L(0,1)\hookrightarrow X.
\]
This includes Hilbert cubes and infinite products of Gaussian and
exponential laws.

The second consists of centered Gaussian measures on separable
Hilbert spaces. We prove a dimension-free local \(L\log
L\)-Poincar\'e inequality (Theorem~\ref{thm:HilbertGauss}) and
obtain the same exact characterization under the two-sided
small-ball estimate
\[
-\log\gamma(B(0,r))\simeq r^{-\beta}
\]
for some \(\beta>0\) (see Corollary~\ref{cor:HilbertGauss}).
Brownian motion viewed as a Gaussian measure on \(L^2(0,1)\)
provides a concrete example.

Finally, for Wiener measure on \(C_0([0,1])\), equipped with the
uniform norm, we obtain the same exact endpoint (see
Corollary~\ref{cor:Wiener}). Here upper gradients are taken with
respect to the uniform metric.

Thus \(L\log L\) plays two complementary roles. When doubling is
geometrically possible, it is the critical functional threshold for
forcing doubling. In the infinite-dimensional models considered
above, where doubling with full support is impossible, the same
logarithmic endpoint remains valid and is sharp.

The paper is organized as follows. Section~\ref{sec:pre} introduces
the notation and collects the preliminary results on
rearrangement-invariant spaces and Poincar\'e inequalities.
Section~\ref{sec:preparatory} establishes the radial estimates and
their consequences for local doubling and small-ball growth,
together with the logarithmic endpoint for log-concave measures.
These results are then used in Section~\ref{sec:main-proofs} to
prove Theorems~\ref{thm:finite-sharp} and~\ref{thm:SB}. Finally,
Section~\ref{sec:infinite} establishes sharp logarithmic endpoints
for infinite product models, Gaussian measures on Hilbert spaces,
and Wiener measure with the uniform norm.

\section{Preliminaries}
\label{sec:pre}

\subsection{Background and notation}
\label{sec:background}

We keep the standing assumptions and the notation of
Section~\ref{sec:introduction}. No finiteness assumption is imposed
on \(\mu(\Omega)\), and we say that \(f\) is locally integrable if
it is integrable on every ball. For a ball \(B\), we write
\(\mu_B=\mu|_B/\mu(B)\).

For \(x\in\Omega\) and \(r>0\), set
\[
V_x(r)=\mu(B(x,r)), \qquad D_x(r)=\frac{V_x(2r)}{V_x(r)}, \qquad
\psi_x(r)=\log\frac1{V_x(r)}.
\]
We refer to \(\psi_x\) as the small-ball function at \(x\). Notice
that \(\mu\) is doubling if and only if
\[
\sup_{x\in\Omega,\ r>0}D_x(r)<\infty.
\]

A nonnegative Borel function \(g:\Omega\to[0,\infty]\) is an upper
gradient of a measurable function \(f:\Omega\to\mathbb R\) if
\[
|f(x)-f(y)|\leq\int_\gamma g\,ds
\]
for all \(x,y\in\Omega\) and every rectifiable curve \(\gamma\)
joining \(x\) and \(y\) (see \cite{HeinonenKoskela,HKST}). Throughout
the paper, upper gradients are understood in this sense.

Let \((E,\nu)\) be a finite nonatomic measure space. For a
measurable function \(f\) on \(E\), its decreasing rearrangement
with respect to \(\nu\) is
\[
f_\nu^*(s) = \inf\{t\geq0:\nu\{x\in E:|f(x)|>t\}\leq s\}, \qquad
0<s<\nu(E).
\]
When the underlying measure is clear, we omit the subscript. In
particular, for functions on \((0,1)\), \(f^*\) denotes the
decreasing rearrangement with respect to Lebesgue measure.

A Banach function space \(X\) on \((0,1)\), with the Fatou property,
is called rearrangement-invariant (r.i.) if
\[
\|f\|_X=\|g\|_X \qquad\text{whenever }f^*=g^*.
\]
As usual, r.i.\ spaces have the lattice property and satisfy
\[
L^\infty(0,1)\hookrightarrow X\hookrightarrow L^1(0,1).
\]
Throughout, \(X\) denotes a fixed r.i.\ space on \((0,1)\),
normalized by
\[
\|\chi_{(0,1)}\|_X=1.
\]
We refer to \cite{BennettSharpley,KreinPetuninSemenov} for the basic
theory of rearrangements and r.i.\ spaces.

\begin{definition}
\label{def:rirestricted} Let \(E\subset\Omega\) be measurable, with
\(0<\mu(E)<\infty\), and set
\[
\mu_E=\frac{\mu|_E}{\mu(E)}.
\]
We define \(X(E,\mu_E)\) by
\[
\|f\|_{X(E,\mu_E)} := \|f_{\mu_E}^*\|_X.
\]
Equivalently, writing \(f_\mu^*\) for the decreasing rearrangement
of \(f|_E\) with respect to \(\mu|_E\), we have
\[
f_{\mu_E}^*(s)=f_\mu^*(\mu(E)s), \qquad 0<s<1,
\]
and hence
\[
\|f\|_{X(E,\mu_E)} = \big\|f_\mu^*(\mu(E)\,\cdot)\big\|_X.
\]
We refer to this as the normalized \(X\)-norm on \(E\) (see
\cite[(2.13)--(2.14)]{CavaliereCianchi}).
\end{definition}

In particular, for a ball \(B\),
\[
\|f\|_{X(B,\mu_B)}=\|f_{\mu_B}^*\|_X.
\]
With this notation, the Poincar\'e inequality (\ref{eq:intro-PI})
and the testing condition (\ref{eq:PI-B}) introduced in the
Introduction are understood in terms of the normalized \(X\)-norm on
each ball.

For \(X=L^q(0,1)\), \(1\leq q<\infty\),
\[
\|f\|_{L^q(B,\mu_B)} = \left( \frac1{\mu(B)}\int_B|f|^q\,d\mu
\right)^{1/q},
\]
and (\ref{eq:intro-PI}) is the classical \((q,1)\)-Poincar\'e
inequality (\ref{eq:qp-Poincare}).

If \(X=L^\Phi(0,1)\) is the Orlicz space associated with a Young
function \(\Phi\), then
\[
\|h\|_{L^\Phi(B,\mu_B)} = \inf\left\{ \lambda>0: \frac1{\mu(B)}
\int_B \Phi\left(\frac{|h|}{\lambda}\right)\,d\mu \leq1 \right\}.
\]
We denote by \(L\log L\) the Orlicz space corresponding to
\begin{equation}
\label{eq:Phi0} \Phi_0(t)=\frac{t\log(e+t)}{\log(e+1)}, \qquad
t\geq0.
\end{equation}
This is the critical endpoint in the results below.

The fundamental function of \(X\) is defined by
\begin{equation*}
\phi_X(s)=\|\chi_{(0,s)}\|_X, \qquad 0<s\leq1. 
\end{equation*}
By our normalization, \(\phi_X(1)=1\), and we set \(\phi_X(0)=0\).
From Definition~\ref{def:rirestricted}, for every measurable set
\(E\subset B\),
\[
\|\chi_E\|_{X(B,\mu_B)} = \phi_X\left(\frac{\mu(E)}{\mu(B)}\right).
\]

We shall use the function
\[
U_X(t)=t\phi_X(1/t), \qquad t\geq1.
\]
Thus \(U_X(t)\) is the ratio between the \(X\)-norm and the
\(L^1\)-norm of the characteristic function of a set of measure
\(1/t\). Since \(\phi_X\) is quasiconcave and \(\phi_X(1)=1\),
\[
s\leq\phi_X(s)\leq1, \qquad 0<s\leq1.
\]
Moreover, \(\phi_X(s)/s\) is nonincreasing, and hence \(U_X\) is
nondecreasing. In particular,
\[
1\leq U_X(t)\leq t, \qquad t\geq1.
\]

For an Orlicz space \(L^\Phi(0,1)\), where \(\Phi\) is a Young
function normalized by \(\Phi(1)=1\), the fundamental function is
\[
\phi_{L^\Phi}(s) =\frac{1}{\Phi^{-1}(1/s)}.
\]
Consequently,
\[
U_{L^\Phi}(t) =\frac{t}{\Phi^{-1}(t)}.
\]

For \(x\in\Omega\), the lower local dimension of \(\mu\) at \(x\) is
defined by
\begin{equation*}
\underline d_\mu(x) = \liminf_{r\downarrow0} 
\frac{\log\mu(B(x,r))}{\log r}.
\end{equation*}

We use the standard notation \(\dim_H E\) for the Hausdorff
dimension of a set \(E\) (see \cite{HeinonenLectures,Falconer}). The
upper Hausdorff dimension of \(\mu\) is
\[
\dim_H^+\mu = \inf\{\dim_H F:\ F\subset\Omega\text{ Borel and }
\mu(\Omega\setminus F)=0\}.
\]
In particular,
\[
\dim_H^+\mu\leq\dim_H\Omega.
\]

We define the lower local order (see \cite[Definition~1.6]{Helfter})
by
\begin{equation*}
\underline{\operatorname{ord}}_{\mathrm{loc}}\mu(x) =
\liminf_{r\downarrow0} \frac{\log\log(1/\mu(B(x,r)))}{\log(1/r)}.
\end{equation*}


\subsection{Auxiliary lemmas}
\label{sec:pre-lemmas}

We first establish the functional characterization of the \(L\log
L\) threshold. We shall also use the relation between the upper
Hausdorff dimension of the measure and its lower local dimension.

For an Orlicz space \(L^\Phi(0,1)\), we have
\(U_\Phi(t)=t/\Phi^{-1}(t)\). Setting \(t=\Phi(s)\) gives
\(U_\Phi(\Phi(s))/\log\Phi(s) =(\Phi(s)/s)/\log\Phi(s)\). Since
\(\Phi(s)/s\) is nondecreasing, it follows that, for every
\(s_0>1\),
\begin{equation*}
 \sup_{t\geq2}\frac{U_\Phi(t)}{\log
t}=\infty \quad\Longleftrightarrow\quad \sup_{s\geq
s_0}\frac{\Phi(s)}{s\log s}=\infty.
\end{equation*}

For the Young function \(\Phi_0\) defined in (\ref{eq:Phi0}),
\begin{equation}
\label{eq:Phi0inverse} \Phi_0^{-1}(t)\simeq\frac{t}{\log(e+t)},
\qquad t\geq2.
\end{equation}
Consequently,
\begin{equation*}
U_{\Phi_0}(t)\simeq\log t, \qquad t\geq2.
\end{equation*}

The next lemma identifies this logarithmic growth precisely with the
embedding of \(L\log L\) into \(X\).

\begin{lemma}
\label{lem:LlogL-X} Let \(X\) be the r.i.\ space fixed above. The
following conditions are equivalent:
\begin{enumerate}
\item[\rm (i)]
\[
\sup_{t\geq2}\frac{U_X(t)}{\log t}<\infty.
\]

\item[\rm (ii)]
\[
L\log L(0,1)\hookrightarrow X.
\]
\end{enumerate}
Moreover, if these conditions hold, then for every measurable
\(E\subset\Omega\), with \(0<\mu(E)<\infty\),
\begin{equation}
\label{eq:LlogL-X-embedding} \|f\|_{X(E,\mu_E)} \leq
C_X\|f\|_{L^{\Phi_0}(E,\mu_E)}.
\end{equation}
\end{lemma}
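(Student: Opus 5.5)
The proof splits into the two implications and then the transfer to arbitrary measurable sets \(E\).

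The implication (ii)\(\Rightarrow\)(i) follows from the fundamental functions. If \(L\log L(0,1)\hookrightarrow X\) with constant \(C\), then testing on \(\chi_{(0,1/t)}\) gives \(\phi_X(1/t)\le C\phi_{L^{\Phi_0}}(1/t)\). Multiplying by \(t\) yields \(U_X(t)\le C\,U_{\Phi_0}(t)\). By (\ref{eq:Phi0inverse}), \(U_{\Phi_0}(t)\simeq\log t\) for \(t\ge2\), so (i) holds.

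For (i)\(\Rightarrow\)(ii), we use the Lorentz space \(\Lambda_{\phi_X}\), the smallest r.i.\ space with fundamental function \(\phi_X\) (after replacing \(\phi_X\) by its least concave majorant, which is equivalent up to a factor \(2\)). It satisfies \(\Lambda_{\phi_X}\hookrightarrow X\) with
\[
\|f\|_X\le C\int_0^1 f^*(s)\,d\tilde\phi_X(s).
\]
It therefore suffices to show \(L\log L\hookrightarrow\Lambda_{\tilde\phi_X}\). Hypothesis (i) gives \(\phi_X(s)\le Cs\log(e/s)\), which holds for \(s\le1/2\) and trivially near \(1\). The function \(s\log(e/s)\) is concave, so \(\tilde\phi_X(s)\le C s\log(e/s)\) as well. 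We would then verify
\[
\int_0^1 f^*\,d\tilde\phi_X\preceq\|f\|_{L\log L}.
\]
To do this, write \(\int_0^1 f^*\,d\tilde\phi = \int_0^\infty\tilde\phi(\lambda_f(t))\,dt\), where \(\lambda_f\) is the distribution function. This is bounded by \(C\int_0^\infty\lambda_f(t)\log(e/\lambda_f(t))\,dt\), which equals \(\int_0^1 f^*(s)\log(1/s)\,ds+\int_0^1f^*\). The first term is comparable to \(\int_0^1 f^{**}\), and the classical Stein/Bennett--Sharpley identification gives \(\|f^{**}\|_{L^1}\simeq\|f\|_{L\log L}\) on \((0,1)\). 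Homogeneity holds on both sides, so no normalization is needed.

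This step, comparing the Lorentz functional with the \(\Phi_0\)-Luxemburg norm, is the main obstacle. It amounts to a concavity argument together with a known classical result. If the Stein-type identification is not taken as given, one can prove the comparison directly. Normalize \(\|f\|_{L^{\Phi_0}}=1\), so \(\int\Phi_0(|f|)\le1\). Split \(\int f^*\log(1/s)\,ds\) according to whether \(f^*(s)\ge s^{-1/2}\) or not. On the first region \(\log(1/s)\le 2\log(e+f^*(s))\). On the second region \(\int s^{-1/2}\log(1/s)\,ds<\infty\). Either way the integral is bounded by a constant.

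Finally, (\ref{eq:LlogL-X-embedding}) follows by applying the embedding to the function \(f^*_{\mu_E}\) on \((0,1)\). By Definition~\ref{def:rirestricted}, \(\|f\|_{X(E,\mu_E)}=\|f^*_{\mu_E}\|_X\). Likewise \(\|f\|_{L^{\Phi_0}(E,\mu_E)}=\|f^*_{\mu_E}\|_{L^{\Phi_0}}\), since the Luxemburg norm with respect to \(\mu_E\) depends only on the distribution of \(f\). Hence \(C_X\) is the embedding constant, independent of \(E\).
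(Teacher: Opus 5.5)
Your proof is correct and follows essentially the paper's route. The paper gets \(\|h\|_X\le\int_0^\infty\phi_X(\lambda_h(t))\,dt\) directly from the layer-cake representation and the triangle inequality, which is exactly the mechanism behind your embedding \(\Lambda_{\tilde\phi_X}\hookrightarrow X\), so the least concave majorant is not needed; it then dominates \(\phi_X\) by \(Cs\log(e/s)\), identifies the result with \(\int_0^1 h^*(s)\log(1/s)\,ds\simeq\|h\|_{L^{\Phi_0}}\) via Bennett--Sharpley, and proves the converse by testing on characteristic functions. One harmless slip: since \(\frac{d}{ds}\bigl(s\log(e/s)\bigr)=\log(1/s)\), the integral \(\int_0^\infty\lambda_f(t)\log(e/\lambda_f(t))\,dt\) equals \(\int_0^1 f^*(s)\log(1/s)\,ds\) exactly, so your extra \(\int_0^1 f^*\) term is only an overestimate; your elementary splitting argument and the rearrangement reduction for \(X(E,\mu_E)\) are valid and make the argument self-contained.
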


\begin{proof}
Assume first that {\rm (i)} holds. Since \(U_X(t)=t\phi_X(1/t)\), we
obtain
\begin{equation*}
 \phi_X(s)\leq C_Xs\log(e/s), \qquad 0<s\leq1.
\end{equation*}

Set
\[
\varphi(s)=s\log(e/s), \qquad 0<s\leq1, \qquad \varphi(0)=0.
\]
Using the layer-cake representation and the triangle inequality,
first for simple functions and then by the Fatou property, for every
measurable \(h\) on \((0,1)\),
\[
\begin{aligned}
\|h\|_X &\leq \int_0^\infty
\phi_X\bigl(|\{|h|>\lambda\}|\bigr)\,d\lambda \\
&\leq C_X\int_0^\infty
\varphi\bigl(|\{|h|>\lambda\}|\bigr)\,d\lambda \\
&=C_X\int_0^1 h^*(s)\log(1/s)\,ds .
\end{aligned}
\]
The last expression is equivalent to the Luxemburg norm of
\(L^{\Phi_0}(0,1)\) (see \cite{BennettSharpley}), and hence
\[
L\log L(0,1)\hookrightarrow X.
\]

The same argument on the probability space \((E,\mu_E)\) gives, for
every measurable \(f\) on \(E\),
\[
\|f\|_{X(E,\mu_E)} \leq C_X\int_0^1 f_{\mu_E}^*(s)\log(1/s)\,ds
\preceq \|f\|_{L^{\Phi_0}(E,\mu_E)},
\]
which proves (\ref{eq:LlogL-X-embedding}).

Conversely, if \(L\log L(0,1)\hookrightarrow X\), then testing the
embedding on characteristic functions and using
(\ref{eq:Phi0inverse}) gives
\[
\phi_X(s)\preceq s\log(e/s), \qquad 0<s\leq\frac12.
\]
Therefore \(U_X(t)=t\phi_X(1/t)\preceq\log t\) for \(t\geq2\), which
proves {\rm (i)}.
\end{proof}

\begin{lemma}
\label{lem:dim} If \(\dim_H^+\mu<\infty\), then
\[
\underline d_\mu\in L^\infty(\Omega,\mu) \qquad\text{and}\qquad
\|\underline d_\mu\|_{L^\infty(\Omega,\mu)} \leq \dim_H^+\mu .
\]
\end{lemma}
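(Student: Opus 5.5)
We argue by contradiction: assume \(D:=\dim_H^+\mu<\infty\) and fix \(s>D\). We show that the set
\[
A_s=\{x\in\Omega:\underline d_\mu(x)>s\}
\]
is \(\mu\)-null. Letting \(s\downarrow D\) along a countable sequence then gives \(\underline d_\mu\leq D\) \(\mu\)-a.e., which is exactly the claim \(\|\underline d_\mu\|_{L^\infty(\Omega,\mu)}\leq\dim_H^+\mu\). Measurability of \(\underline d_\mu\) is routine. The map \(x\mapsto\mu(B(x,r))\) is lower semicontinuous for open balls, and the liminf may be taken along rational \(r\) by monotonicity of \(r\mapsto\mu(B(x,r))\), up to harmless comparisons between \(r\) and nearby rationals.

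By definition of \(\dim_H^+\mu\), choose a Borel set \(F\) with \(\mu(\Omega\setminus F)=0\) and \(\dim_H F<s\). It suffices to show \(\mu(A_s\cap F)=0\). Pick \(s'\) with \(D<\dim_H F<s'<s\). For \(x\in A_s\) we have \(\mu(B(x,r))\leq r^{s}\leq r^{s'}\) for all small \(r\). So we decompose
\[
A_s=\bigcup_k A_{s,k},\qquad A_{s,k}=\{x:\mu(B(x,r))\leq r^{s'}\ \text{for all } 0<r<1/k\}.
\]
It is enough to prove \(\mu(A_{s,k}\cap F)=0\) for each \(k\).

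This is the mass distribution principle in its covering form. Since \(\mathcal H^{s'}(F)=0\), for every \(\varepsilon>0\) and \(\delta<1/(2k)\) there is a countable cover of \(A_{s,k}\cap F\) by sets \(E_i\) with \(\operatorname{diam}E_i<\delta\) and \(\sum_i(\operatorname{diam}E_i)^{s'}<\varepsilon\). We discard the sets not meeting \(A_{s,k}\), and for each remaining \(E_i\) pick \(x_i\in E_i\cap A_{s,k}\). If \(\operatorname{diam}E_i>0\), then \(E_i\subset B(x_i,2\operatorname{diam}E_i)\), and hence
\[
\mu(E_i)\leq 2^{s'}(\operatorname{diam}E_i)^{s'}.
\]
If \(\operatorname{diam}E_i=0\), then \(E_i\) is a point, and its measure is zero because \(\mu\) is nonatomic. (Alternatively, \(E_i\subset B(x_i,\eta)\) for every small \(\eta\), so \(\mu(E_i)\leq\eta^{s'}\to0\).) Summing over \(i\) gives \(\mu(A_{s,k}\cap F)\leq 2^{s'}\varepsilon\), and therefore \(\mu(A_{s,k}\cap F)=0\).

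The only delicate points are the measurability of \(A_{s,k}\) and the zero-diameter sets in the cover. If measurability of \(A_{s,k}\) is awkward, we can instead use outer measure \(\mu^*\) throughout; the covering argument shows \(\mu^*(A_{s,k}\cap F)=0\), which suffices. No doubling or separability is needed, since the estimate uses only balls centered at points of \(A_{s,k}\).

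The main obstacle I would expect is purely technical: passing from the pointwise liminf condition to the uniform sets \(A_{s,k}\) so that the covering estimate applies. This is handled by the countable union above.
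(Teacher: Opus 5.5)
Your argument is correct and is essentially the paper's proof: you uniformize the small-ball bound into the sets \(A_{s,k}\), cover them using \(\mathcal H^{s'}(F)=0\) with balls centered at points of \(A_{s,k}\), and discard zero-diameter pieces by nonatomicity, whereas the paper runs the same covering with sets \(A_{m,k}\) carrying a constant \(m\) and the single exponent \(s\), concluding \(\underline d_\mu\leq s\) off a null set. Two harmless slips do not affect the proof: you only have \(A_s\subset\bigcup_k A_{s,k}\) (not equality), and only \(D\leq\dim_H F\) (not strict inequality).
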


\begin{proof}
Fix \(s>\dim_H^+\mu\), and choose a Borel set \(F\subset\Omega\)
such that
\[
\mu(\Omega\setminus F)=0 \qquad\text{and}\qquad \dim_HF<s.
\]
Then \(\mathcal H^s(F)=0\).

For integers \(m,k\geq1\), set
\[
A_{m,k} = \left\{ x\in F: \mu(B(x,r))\leq mr^s \text{ for every
}0<r<1/k \right\}.
\]
We claim that \(\mu^*(A_{m,k})=0\). Fix \(\eta>0\). Since \(\mathcal
H^s(F)=0\), there is a countable cover
\(A_{m,k}\subset\bigcup_iE_i\) such that
\[
\operatorname{diam}E_i<\frac1{4k}, \qquad
\sum_i(\operatorname{diam}E_i)^s<2^{-s}\eta.
\]
We may discard the sets which do not meet \(A_{m,k}\). Those of zero
diameter are singletons and hence form a countable set of
\(\mu\)-measure zero. For every remaining \(E_i\), choose \(x_i\in
A_{m,k}\cap E_i\) and
\[
\operatorname{diam}E_i<r_i<2\operatorname{diam}E_i.
\]
Then \(r_i<1/k\), the balls \(B(x_i,r_i)\) cover \(A_{m,k}\) up to a
countable set, and
\[
\sum_i r_i^s<\eta.
\]
Therefore
\[
\mu^*(A_{m,k}) \leq \sum_i\mu(B(x_i,r_i)) \leq m\sum_i r_i^s <m\eta.
\]
Letting \(\eta\downarrow0\) proves the claim.

Set
\[
N=(\Omega\setminus F)\cup\bigcup_{m,k\geq1}A_{m,k}.
\]
Then \(\mu^*(N)=0\). If \(x\notin N\), there are radii
\(r_j\downarrow0\) such that
\[
\mu(B(x,r_j))\geq r_j^s.
\]
Since \(\log r_j<0\),
\[
\frac{\log\mu(B(x,r_j))}{\log r_j}\leq s,
\]
and therefore
\[
\underline d_\mu(x)\leq s.
\]
Thus \(\underline d_\mu\leq s\) almost everywhere. Since this holds
for every \(s>\dim_H^+\mu\), we obtain
\[
\underline d_\mu(x)\leq\dim_H^+\mu \qquad\text{for almost every
}x\in\Omega.
\]
Hence
\[
\underline d_\mu\in L^\infty(\Omega,\mu) \qquad\text{and}\qquad
\|\underline d_\mu\|_{L^\infty(\Omega,\mu)} \leq\dim_H^+\mu.
\]
\end{proof}


\section{Preparatory results for the main theorems}
\label{sec:preparatory}

We establish here the functional and geometric estimates that will
be used in the proofs of the main theorems. We begin with the radial
estimates underlying the doubling arguments, then consider the
stronger small-scale growth associated with finite lower local
order, and finally establish the logarithmic endpoint for
log-concave measures.

\subsection{Radial estimates and local doubling}
\label{sec:main-estimates}

The results in this subsection prove the implication {\rm
(i)}\(\Rightarrow\){\rm (ii)} in Theorem~\ref{thm:finite-sharp}.
They require only the testing \((X,L^1)\)-Poincar\'e inequality
(\ref{eq:PI-B}), rather than the full \((X,L^1)\)-Poincar\'e
inequality (\ref{eq:intro-PI}).

Recall that
\[
V_x(r)=\mu(B(x,r)),\qquad D_x(r)=\frac{V_x(2r)}{V_x(r)}, \qquad
x\in\Omega,\ r>0.
\]

\begin{lemma}[Pointwise radial inequality]
\label{lem:radial-point} Let \(x\in\Omega\) and \(0<\rho<R'\). If
\(V_x\) is differentiable at \(\rho\) and
\[
V_x(R')\geq4V_x(\rho),
\]
then
\begin{equation}
\label{eq:radial-point} U_X\left(\frac{V_x(R')}{V_x(\rho)}\right)
\leq \frac32\,C_P R'\frac{V_x'(\rho)}{V_x(\rho)}.
\end{equation}
In particular, if \(V_x\) is differentiable at \(r\) and
\(D_x(r)\geq4\), then
\begin{equation}
\label{eq:radial-diff} U_X(D_x(r)) \leq 3C_P
r\frac{V_x'(r)}{V_x(r)}.
\end{equation}
\end{lemma}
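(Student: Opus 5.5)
We test the testing Poincar\'e inequality (\ref{eq:PI-B}) on a radial Lipschitz function. Fix \(0<\varepsilon<R'-\rho\) small and set
\[
f_\varepsilon(y)=\min\Big\{1,\frac{\bigl(\rho+\varepsilon-d(x,y)\bigr)_+}{\varepsilon}\Big\},
\]
so \(f_\varepsilon=1\) on \(B(x,\rho)\), \(f_\varepsilon=0\) off \(B(x,\rho+\varepsilon)\), and \(f_\varepsilon\) is bounded and \(1/\varepsilon\)-Lipschitz. Hence \(g_\varepsilon=\varepsilon^{-1}\chi_{\{\rho\le d(x,\cdot)\le\rho+\varepsilon\}}\) is an upper gradient; to be safe with closed versus open sets, take a slightly enlarged annulus, which costs nothing in the limit. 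We apply (\ref{eq:PI-B}) on the ball \(B=B(x,R'/\sigma')\), or more simply on \(B=B(x,R')\) after noting that the right side involves \(\sigma B\). It is cleaner to choose the ball so that \(r_B\) is comparable to \(R'\). I would take \(B=B(x,R')\), so \(r_B=R'\). The right side of (\ref{eq:PI-B}) is then
\[
C_PR'\,\frac{1}{V_x(R')}\int_{\sigma B}g_\varepsilon\,d\mu=C_PR'\,\frac{V_x(\rho+\varepsilon)-V_x(\rho)}{\varepsilon V_x(R')}+o(1),
\]
which tends to \(C_PR'V_x'(\rho)/V_x(R')\) as \(\varepsilon\to0\) by differentiability at \(\rho\).

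For the left side, write \(a=f_{B,\mu}\). Since \(0\le f_\varepsilon\le1\) and \(f_\varepsilon\) vanishes outside \(B(x,\rho+\varepsilon)\), we have \(a\le V_x(\rho+\varepsilon)/V_x(R')\), which is at most about \(1/4\) by the hypothesis \(V_x(R')\ge4V_x(\rho)\) and letting \(\varepsilon\to0\). Thus \(f_\varepsilon-a\ge 1-a\ge 3/4-o(1)\) on \(B(x,\rho)\). By the lattice property and the identity \(\|\chi_E\|_{X(B,\mu_B)}=\phi_X(\mu(E)/\mu(B))\),
\[
\|(f_\varepsilon-a)^*_{\mu_B}\|_X\ge(1-a)\,\phi_X\Big(\frac{V_x(\rho)}{V_x(R')}\Big)=(1-a)\frac{V_x(\rho)}{V_x(R')}\,U_X\Big(\frac{V_x(R')}{V_x(\rho)}\Big).
\]

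Combining the two sides, multiplying by \(V_x(R')/V_x(\rho)\), and letting \(\varepsilon\to0\) gives
\[
U_X\Big(\frac{V_x(R')}{V_x(\rho)}\Big)\le\frac{1}{1-a}\,C_PR'\frac{V_x'(\rho)}{V_x(\rho)}\le\frac43C_PR'\frac{V_x'(\rho)}{V_x(\rho)},
\]
which is stronger than the claimed constant \(3/2\). The point requiring care is the normalization: the testing inequality divides by \(\mu(B)\), not \(\mu(\sigma B)\). This is exactly what makes the gradient term \(\int_{\sigma B}g_\varepsilon\,d\mu\) independent of \(\sigma\), because the support of \(g_\varepsilon\) lies inside \(B\) once \(\rho+2\varepsilon<R'\). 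The other delicate step is the limit \(\varepsilon\to0\) on the left side, which uses only \(a\le V_x(\rho+\varepsilon)/V_x(R')\to V_x(\rho)/V_x(R')\). For (\ref{eq:radial-diff}), take \(\rho=r\) and \(R'=2r\); then \(D_x(r)\ge4\) is exactly the hypothesis, and \(\tfrac32C_P(2r)=3C_Pr\).
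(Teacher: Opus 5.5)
Your proposal is correct and is essentially the paper's argument: the same radial cutoff is tested in (\ref{eq:PI-B}) on $B=B(x,R')$, and then you let $\varepsilon\downarrow0$ using the differentiability of $V_x$ at $\rho$. The differences are minor. First, you use the continuity of $V_x$ at $\rho$ to get $\limsup_{\varepsilon\downarrow0} f_{B,\mu}\le V_x(\rho)/V_x(R')\le 1/4$, which yields the slightly better constant $4/3$, whereas the paper settles for $f_{B,\mu}\le 1/3$ and the constant $3/2$. Second, your closed-annulus upper gradient avoids the paper's level-set argument for the open annulus, at the cost of the harmless enlargement you mention.
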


\begin{proof}
For \(\varepsilon>0\), define the bounded Lipschitz radial cutoff
function centered at \(x\) by
\[
f_\varepsilon(y)=\eta_\varepsilon(d(x,y)),
\]
where
\[
\eta_\varepsilon(t)=
\begin{cases}
1, & 0\leq t\leq\rho,\\[1mm]
1-\dfrac{t-\rho}{\varepsilon},
& \rho<t<\rho+\varepsilon,\\[2mm]
0, & t\geq\rho+\varepsilon.
\end{cases}
\]
We claim that
\[
g_\varepsilon(y)=\frac1{\varepsilon}
\chi_{\{\rho<d(x,y)<\rho+\varepsilon\}}(y)
\]
is an upper gradient of \(f_\varepsilon\).

Indeed, let \(\gamma:[0,\ell]\to\Omega\) be a rectifiable curve
parametrized by arc length and set \(h(t)=d(x,\gamma(t))\). Then
\(h\) is \(1\)-Lipschitz, hence absolutely continuous, with
\(|h'(t)|\leq1\) almost everywhere. The function
\(f_\varepsilon\circ\gamma=\eta_\varepsilon\circ h\) is also
absolutely continuous. On \(\{\rho<h<\rho+\varepsilon\}\), the chain
rule gives
\[
|(f_\varepsilon\circ\gamma)'(t)| \leq\frac1{\varepsilon}
\]
almost everywhere, while on \(\{h<\rho\}\cup\{h>\rho+\varepsilon\}\)
the derivative vanishes almost everywhere.

At the two levels \(h=\rho\) and \(h=\rho+\varepsilon\),
\(\eta_\varepsilon\) is not differentiable, so the preceding chain
rule cannot be applied directly. However, since \(h\) is Lipschitz,
\(h'=0\) almost everywhere on each of these level sets (see
\cite[Theorem~3.3(i)]{EvansGariepy}). Hence, at every such point
where \(h'(t)=0\), the \(1/\varepsilon\)-Lipschitz property of
\(\eta_\varepsilon\) gives
\[
\limsup_{s\to0}
\frac{|\eta_\varepsilon(h(t+s))-\eta_\varepsilon(h(t))|}{|s|} \leq
\frac1{\varepsilon} \lim_{s\to0}\frac{|h(t+s)-h(t)|}{|s|} = 0.
\]
Thus
\[
|(f_\varepsilon\circ\gamma)'(t)| \leq g_\varepsilon(\gamma(t))
\]
for almost every \(t\in(0,\ell)\). By absolute continuity,
\[
|f_\varepsilon(\gamma(\ell))-f_\varepsilon(\gamma(0))| \leq
\int_\gamma g_\varepsilon\,ds.
\]
Thus \(g_\varepsilon\) is an upper gradient of \(f_\varepsilon\), as
claimed.

We now apply (\ref{eq:PI-B}) to the ball \(B=B(x,R')\), and write
\(V=V_x\) for simplicity. Since \(V\) is differentiable at \(\rho\)
and \(V(R')\geq4V(\rho)\), for all sufficiently small
\(\varepsilon>0\), we get
\[
\rho+\varepsilon<R', \qquad
\frac{V(\rho+\varepsilon)}{V(R')}\leq\frac13.
\]
Hence
\[
(f_\varepsilon)_{B,\mu}\leq\frac13,
\]
since \(0\leq f_\varepsilon\leq1\) and \(f_\varepsilon\) vanishes
outside \(B(x,\rho+\varepsilon)\); moreover, using that
\(f_\varepsilon=1\) on \(B(x,\rho)\), we obtain
\[
|f_\varepsilon-(f_\varepsilon)_{B,\mu}| \geq\frac23\chi_{B(x,\rho)}.
\]

Since, by hypothesis, \(Q=V(R')/V(\rho)\geq4\), and \(B=B(x,R')\),
we have
\[
\mu_B(B(x,\rho)) = \frac{\mu(B(x,\rho))}{\mu(B(x,R'))} =
\frac{V(\rho)}{V(R')} = \frac1Q.
\]
Therefore,
\[
\|f_\varepsilon-(f_\varepsilon)_{B,\mu}\|_{X(B,\mu_B)} \geq
\frac23\|\chi_{B(x,\rho)}\|_{X(B,\mu_B)} = \frac23\phi_X(1/Q).
\]

On the other hand, the support of \(g_\varepsilon\) is contained in
\(B\subset\sigma B\), and hence
\[
\int_{\sigma B}g_\varepsilon\,d\mu \leq
\frac{V(\rho+\varepsilon)-V(\rho)}{\varepsilon}.
\]
Applying the testing \((X,L^1)\)-Poincar\'e inequality
(\ref{eq:PI-B}), and recalling that \(r_B=R'\) and \(\mu(B)=V(R')\),
we obtain
\[
\frac23\phi_X(1/Q) \leq C_PR'\frac{V(\rho+\varepsilon)-V(\rho)}
{\varepsilon V(R')}.
\]
Letting \(\varepsilon\downarrow0\) and using the differentiability
of \(V\) at \(\rho\), we obtain
\[
\frac23\phi_X(1/Q) \leq C_PR'\frac{V'(\rho)}{V(R')}.
\]
Multiplying by \(3Q/2\) and using \(Q\phi_X(1/Q)=U_X(Q)\) and
\(Q/V(R')=1/V(\rho)\), we obtain (\ref{eq:radial-point}).

Finally, taking \(\rho=r\) and \(R'=2r\) gives
(\ref{eq:radial-diff}).
\end{proof}

\begin{lemma}[Radial growth inequality]
\label{lem:radial} Assume that the testing \((X,L^1)\)-Poincar\'e
inequality (\ref{eq:PI-B}) holds. Then there exists \(C_0\geq1\),
depending only on \(C_P\) and \(X\), such that
\begin{equation}
\label{eq:radial} U_X(D_x(2R)) \leq C_0\bigl(1+\log D_x(R)\bigr)
\end{equation}
for every \(x\in\Omega\) and every \(R>0\).
\end{lemma}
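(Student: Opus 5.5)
We prove the radial growth inequality by integrating the pointwise radial inequality, Lemma~\ref{lem:radial-point}, over a range of radii. The aim is to compare the growth of \(V=V_x\) between \(2R\) and \(4R\) with the logarithmic growth of \(V\) between \(R\) and \(2R\). We use the version (\ref{eq:radial-point}) with a fixed outer radius \(R'=4R\), and let \(\rho\) vary in \([R,2R]\); recall that \(D_x(2R)=V(4R)/V(2R)\). If \(D_x(2R)<4\), then \(U_X(D_x(2R))\le U_X(4)\le 4\), and (\ref{eq:radial}) holds with \(C_0\ge4\). So we assume \(D_x(2R)\ge4\).

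For every \(\rho\in[R,2R]\) we have \(V(4R)\ge V(4R)\cdot V(\rho)/V(2R)\), so
\[
\frac{V(4R)}{V(\rho)}\ge\frac{V(4R)}{V(2R)}=D_x(2R)\ge4.
\]
The hypothesis of Lemma~\ref{lem:radial-point} therefore holds at every point of differentiability of the monotone function \(V\), hence at almost every \(\rho\in[R,2R]\). Since \(U_X\) is nondecreasing, we get for a.e. \(\rho\in[R,2R]\)
\[
U_X(D_x(2R))\le U_X\!\left(\frac{V(4R)}{V(\rho)}\right)\le 6C_PR\,\frac{V'(\rho)}{V(\rho)}.
\]
Integrating in \(\rho\) over \([R,2R]\) and dividing by \(R\) gives
\[
U_X(D_x(2R))\le 6C_P\int_R^{2R}\frac{V'(\rho)}{V(\rho)}\,d\rho.
\]

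The main technical point is bounding the last integral, because \(V\) is only monotone, not absolutely continuous. For a nondecreasing function, the a.e. derivative of \(\log V\) exists and equals \(V'/V\) a.e., since \(V\ge V(R)>0\) on the interval. Lebesgue's theorem for monotone functions, \(\int_a^b F'\le F(b)-F(a)\), applied to the nondecreasing function \(F=\log V\), yields
\[
\int_R^{2R}\frac{V'}{V}\,d\rho\le\log V(2R)-\log V(R)=\log D_x(R).
\]
Hence \(U_X(D_x(2R))\le 6C_P\log D_x(R)\). Combining this with the trivial case gives (\ref{eq:radial}) with \(C_0=\max\{4,6C_P\}\), which depends only on \(C_P\) (and is independent of \(X\) beyond the normalization of its fundamental function).
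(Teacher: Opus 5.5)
Your proof is correct and is essentially the paper's argument: you apply Lemma~\ref{lem:radial-point} with outer radius \(R'=4R\) and \(\rho\in(R,2R)\), and combine it with the bound \(\int_R^{2R}V'/V\,d\rho\le\log D_x(R)\) for the nondecreasing function \(\log V\). The differences are cosmetic: you integrate the pointwise inequality instead of selecting a single good \(\rho\), you cite Lebesgue's theorem for monotone functions instead of the Radon-measure derivative of \(\log V\), and you use \(U_X(4)\le4\) to get a constant depending only on \(C_P\).
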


\begin{proof}
Fix \(x\in\Omega\) and write
\[
V(r)=V_x(r)=\mu(B(x,r)).
\]
The function \(V\) is positive, finite and nondecreasing, hence
differentiable almost everywhere. The function \(\log V\) is also
nondecreasing, and its distributional derivative is a positive Radon
measure whose absolutely continuous part has density \(V'/V\) (see
\cite[Theorem~3.28(c) and Corollary~3.29] {AmbrosioFuscoPallara}).
Consequently,
\[
\int_R^{2R}\frac{V'(r)}{V(r)}\,dr \leq \log V(2R)-\log V(R) = \log
D_x(R).
\]

Since \((R,2R)\) has length \(R\), there exists a point
\(\rho\in(R,2R)\) at which \(V\) is differentiable and
\begin{equation}
\label{eq:rho} R\frac{V'(\rho)}{V(\rho)} \leq \log D_x(R).
\end{equation}

If \(D_x(2R)<4\), then, by the monotonicity of \(U_X\) and the fact
that \(D_x(R)\geq1\),
\[
U_X(D_x(2R)) \leq U_X(4) \leq U_X(4)\bigl(1+\log D_x(R)\bigr).
\]
Hence we may assume that \(D_x(2R)\geq4\). Since \(\rho<2R\),
\[
Q=\frac{V(4R)}{V(\rho)} \geq \frac{V(4R)}{V(2R)} = D_x(2R) \geq4.
\]
Lemma~\ref{lem:radial-point}, applied with \(R'=4R\), gives
\[
U_X(Q) \leq 6C_PR\frac{V'(\rho)}{V(\rho)} \leq 6C_P\log D_x(R),
\]
where the last inequality follows from (\ref{eq:rho}). Since \(Q\geq
D_x(2R)\) and \(U_X\) is nondecreasing,
\[
U_X(D_x(2R)) \leq 6C_P\log D_x(R).
\]
Combining the two cases proves (\ref{eq:radial}) with, for instance,
\[
C_0=\max\{1,U_X(4),6C_P\}.
\]
\end{proof}

\begin{lemma}[Propagation of local doubling]
\label{lem:propagation} Assume that the testing
\((X,L^1)\)-Poincar\'e inequality (\ref{eq:PI-B}) holds and
\[
\sup_{t\geq2}\frac{U_X(t)}{\log t}=\infty.
\]
Then, for every \(D_0\geq1\), there exists \(M=M(D_0,C_P,X)<\infty\)
such that
\[
D_x(r_0)\leq D_0 \quad\Longrightarrow\quad D_x(r)\leq M
\qquad\text{for every }r\geq r_0.
\]
Moreover, if there exist \(D_0<\infty\) and a dense set
\(G\subset\Omega\) such that, for every \(x\in G\), there is a
sequence \(r_j(x)\downarrow0\) satisfying
\[
D_x(r_j(x))\leq D_0,\qquad j\geq1,
\]
then \(\mu\) is doubling, with doubling constant depending only on
\(D_0\), \(C_P\), and \(X\).
\end{lemma}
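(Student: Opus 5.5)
The argument iterates the radial growth inequality of Lemma~\ref{lem:radial} along dyadic scales. I would use the unboundedness of \(U_X(t)/\log t\) to show that the iteration cannot escalate: a bounded doubling ratio at one scale yields a bounded ratio at all larger scales.

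Concretely, fix \(D_0\ge1\). Since \(\sup_{t\ge2}U_X(t)/\log t=\infty\), I will choose \(T\ge \max\{D_0,4\}\) with
\[
U_X(T)>C_0(1+\log T),
\]
where \(C_0\) is the constant of Lemma~\ref{lem:radial}. Set \(M=T\). I then prove by induction on \(k\ge0\) that \(D_x(2^kr_0)\le T\). The base case holds because \(D_x(r_0)\le D_0\le T\). For the inductive step, suppose \(D_x(2^kr_0)\le T\) but \(D_x(2^{k+1}r_0)>T\). Lemma~\ref{lem:radial} with \(R=2^kr_0\), together with the monotonicity of \(U_X\), gives
\[
U_X(T)\le U_X(D_x(2^{k+1}r_0))\le C_0(1+\log T),
\]
which contradicts the choice of \(T\). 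This controls only the dyadic radii \(2^kr_0\). For a general \(r\ge r_0\), pick \(k\) with \(2^kr_0\le r<2^{k+1}r_0\). Monotonicity of \(V_x\) then gives
\[
D_x(r)\le \frac{V_x(2^{k+2}r_0)}{V_x(2^kr_0)}=D_x(2^kr_0)\,D_x(2^{k+1}r_0)\le T^2 .
\]
So the first claim holds with \(M=T^2\), and this constant depends only on \(D_0\), \(C_P\) and \(X\).

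For the second claim, take \(x\in G\) and any \(r>0\). Choose \(j\) with \(r_j(x)\le r\); the first part then gives \(D_x(r)\le M\). Hence \(V_x(2r)\le MV_x(r)\) for every \(x\in G\) and every \(r>0\). The main obstacle is passing from the dense set \(G\) to an arbitrary point \(y\in\Omega\). Given \(y\) and \(r\), I will pick \(x\in G\) with \(d(x,y)<\delta\), where \(\delta\le r/4\). The inclusions \(B(y,2r)\subset B(x,2r+\delta)\subset B(x,4r)\) and \(B(x,r-\delta)\subset B(y,r)\) yield
\[
V_y(2r)\le V_x(4r)\le M^3V_x(r/2)\le M^3V_x(3r/4)\le M^3V_y(r).
\]
Here the middle step applies \(D_x\le M\) three times, at the radii \(r/2\), \(r\) and \(2r\). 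The third step uses \(r/2<3r/4\), and the last uses \(B(x,3r/4)\subset B(y,r)\), which holds because \(\delta\le r/4\). This shows that \(\mu\) is doubling with constant \(M^3\), which depends only on \(D_0\), \(C_P\) and \(X\).
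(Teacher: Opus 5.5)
Your proposal is correct and follows the paper's proof essentially step for step. You use the same barrier $T$ obtained from the unboundedness of $U_X(t)/\log t$; the paper notes explicitly that $U_X(t)\le t$ keeps this quotient bounded on compact intervals, which is what justifies taking $T\ge\max\{D_0,4\}$. The rest matches too: the same contradiction at the first dyadic radius where $D_x$ exceeds $T$, the bound $M=T^2$ for general $r\ge r_0$, and the same density argument. The only difference is cosmetic: at the nearby point of $G$ you compare radii $r/2$ and $4r$ and get doubling constant $M^3$, whereas the paper uses $r-\delta$ and $4(r-\delta)$ and gets $M^2$.
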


\begin{proof}
Let \(C_0\) be the constant in Lemma~\ref{lem:radial}. Fix
\(x\in\Omega\) and \(r_0>0\) with \(D_x(r_0)\leq D_0\), and set
\[
b_n=D_x(2^nr_0), \qquad n\geq0.
\]
Applying (\ref{eq:radial}) with \(R=2^nr_0\), we obtain
\begin{equation}
\label{eq:rec} U_X(b_{n+1})\leq C_0(1+\log b_n),\qquad n\geq0.
\end{equation}

Since \(U_X(t)\leq t\), the quotient \(U_X(t)/\log t\) is bounded on
compact subintervals of \([2,\infty)\). We may therefore choose
\(T\geq\max\{e,D_0\}\) such that
\begin{equation}
\label{eq:barrier} U_X(T)>2C_0\log T\geq C_0(1+\log T).
\end{equation}

We claim that \(b_n\leq T\) for every \(n\geq0\). Since \(b_0\leq
D_0\leq T\), suppose by contradiction that \(b_n>T\) for some \(n\),
and let \(n\geq1\) be the first such index. Then \(b_{n-1}\leq T\),
and the monotonicity of \(U_X\), together with (\ref{eq:barrier}),
gives
\[
U_X(b_n)\geq U_X(T)
>
C_0(1+\log T)\geq C_0(1+\log b_{n-1}),
\]
contradicting (\ref{eq:rec}). Hence
\begin{equation*}
 D_x(2^nr_0)\leq T,\qquad n\geq0.
\end{equation*}

Let \(r\geq r_0\), and choose \(n\geq0\) such that
\[
2^nr_0\leq r<2^{n+1}r_0.
\]
Then
\[
V_x(2r)\leq V_x(2^{n+2}r_0),\qquad V_x(r)\geq V_x(2^nr_0),
\]
and therefore
\[
\begin{aligned}
D_x(r) &\leq \frac{V_x(2^{n+2}r_0)}{V_x(2^nr_0)}
\\
&=D_x(2^{n+1}r_0)D_x(2^nr_0)\leq T^2.
\end{aligned}
\]
Thus the first conclusion holds with \(M=T^2\).

Suppose now that \(D_0\) and \(G\) satisfy the assumptions in the
second part. Fix \(x\in G\) and \(r>0\), and choose \(j\) such that
\(r_j(x)\leq r\). By the first part,
\begin{equation}
\label{eq:Gcontrol} D_x(r)\leq M,\qquad x\in G,\quad r>0.
\end{equation}

Fix \(x\in\Omega\) and \(r>0\). Since \(G\) is dense, choose \(y\in
G\) such that
\[
\delta=d(x,y)<r/4.
\]
Then
\[
B(x,2r)\subset B(y,2r+\delta),\qquad B(y,r-\delta)\subset B(x,r).
\]
Since \(2r+\delta<4(r-\delta)\),
\begin{equation}
\label{eq:center-inclusions} B(x,2r)\subset B(y,4(r-\delta)),\qquad
B(y,r-\delta)\subset B(x,r).
\end{equation}
Applying (\ref{eq:Gcontrol}) at \(y\), first with radius
\(r-\delta\) and then with radius \(2(r-\delta)\), gives
\[
\mu(B(y,4(r-\delta))) \leq M^2\mu(B(y,r-\delta)).
\]
Together with (\ref{eq:center-inclusions}), this yields
\[
\mu(B(x,2r))\leq M^2\mu(B(x,r)).
\]
Hence \(\mu\) is doubling.
\end{proof}

\begin{lemma}[Local dimension and doubling]
\label{lem:dim-point} Assume that the testing \((X,L^1)\)-Poincar\'e
inequality (\ref{eq:PI-B}) holds and
\[
\sup_{t\geq2}\frac{U_X(t)}{\log t}=\infty.
\]

\begin{enumerate}
\item[\rm (a)]
For every \(x\in\Omega\), the following conditions are equivalent:
\begin{enumerate}
\item[\rm (i)] \(\sup_{r>0}D_x(r)<\infty\);
\item[\rm (ii)] \(\liminf_{r\downarrow0}D_x(r)<\infty\);
\item[\rm (iii)] \(\underline d_\mu(x)<\infty\).
\end{enumerate}
Moreover, if \(s>\underline d_\mu(x)\), then
\[
\sup_{r>0}D_x(r)\leq M(2^{s+1},C_P,X),
\]
where \(M\) is given by Lemma~\ref{lem:propagation}; if \(D_x(r)\leq
C\) for every \(r>0\), then
\[
\underline d_\mu(x)\leq\log_2 C.
\]

\item[\rm (b)]
The measure \(\mu\) is doubling if and only if
\[
\underline d_\mu\in L^\infty(\Omega,\mu).
\]
Moreover, if
\[
\|\underline d_\mu\|_{L^\infty(\Omega,\mu)}\leq s_0,
\]
then the doubling constant depends only on \(s_0\), \(C_P\), and
\(X\).
\end{enumerate}
\end{lemma}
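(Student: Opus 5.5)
The plan for part (a) is to close the cycle (i)\(\Rightarrow\)(ii)\(\Rightarrow\)(iii)\(\Rightarrow\)(i), extracting the quantitative bounds along the way. The implication (i)\(\Rightarrow\)(ii) is trivial. For (ii)\(\Rightarrow\)(iii), suppose \(D_x(r_j)\leq C\) along a sequence \(r_j\downarrow0\). The first part of Lemma~\ref{lem:propagation} then gives \(D_x(r)\leq M\) for every \(r\geq r_j\), and hence for every \(r>0\). This is the same argument used to derive (\ref{eq:Gcontrol}). So (ii) already yields (i), and it remains only to show that (i) implies (iii) with the stated bound. If \(D_x(r)\leq C\) for all \(r>0\), fix \(r_1>0\) and \(0<r\leq r_1\), and choose \(k\) with \(2^{-k-1}r_1<r\leq 2^{-k}r_1\). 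Iterating the bound gives
\[
V_x(r)\geq C^{-k-1}V_x(r_1).
\]
Taking logarithms, dividing by \(\log r<0\), and letting \(r\downarrow0\) (so that \(k\simeq\log_2(r_1/r)\)), we obtain \(\underline d_\mu(x)\leq\log_2C\).

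The main step is (iii)\(\Rightarrow\)(i) with the explicit constant \(M(2^{s+1},C_P,X)\). Fix \(s>\underline d_\mu(x)\). By the definition of the liminf there are radii \(r_j\downarrow0\) with \(V_x(r_j)\geq r_j^{s}\). I want to turn this into radii at which \(D_x\leq 2^{s+1}\). Argue by contradiction: suppose that \(D_x(r)>2^{s+1}\) for all \(r\leq r_*\). Iterating downward from \(r_*\) gives
\[
V_x(2^{-k}r_*)< 2^{-k(s+1)}V_x(r_*).
\]
This says \(V_x(r)\preceq r^{s+1}\) as \(r\to 0\) (comparing \(r\) with the nearest dyadic radius costs only a constant). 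Such decay is incompatible with \(V_x(r_j)\geq r_j^{s}\) for small \(r_j\). Hence \(\liminf_{r\downarrow0}D_x(r)\leq 2^{s+1}\), i.e.\ there is a sequence \(r_j\downarrow0\) with \(D_x(r_j)\leq 2^{s+1}\). Lemma~\ref{lem:propagation} applied with \(D_0=2^{s+1}\) then gives \(D_x(r)\leq M(2^{s+1},C_P,X)\) for all \(r>0\). The only care needed is the bookkeeping of the dyadic comparison. The extra \(+1\) in the exponent provides exactly the slack to absorb these constants, which is why the threshold is \(2^{s+1}\) rather than \(2^{s}\).

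For part (b), first suppose \(\mu\) is doubling with constant \(C_D\). Part (a) gives \(\underline d_\mu(x)\leq\log_2C_D\) at every point, so \(\underline d_\mu\in L^\infty\). Conversely, suppose \(\|\underline d_\mu\|_{L^\infty}\leq s_0\), and fix \(s=s_0+1\). The set \(G=\{x:\underline d_\mu(x)<s\}\) has full measure. It is dense because every ball has positive measure, by the standing assumption. By the argument of part (a), each \(x\in G\) admits radii \(r_j(x)\downarrow0\) with \(D_x(r_j(x))\leq 2^{s+1}\). The second part of Lemma~\ref{lem:propagation} then yields that \(\mu\) is doubling, with constant depending only on \(2^{s_0+2}\), \(C_P\) and \(X\), that is, only on \(s_0\), \(C_P\) and \(X\).
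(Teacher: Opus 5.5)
Your proposal is correct and follows essentially the same route as the paper: a dyadic contradiction argument to produce arbitrarily small radii with \(D_x\leq 2^{s+1}\), the first part of Lemma~\ref{lem:propagation} to propagate the bound to all \(r>0\), a dyadic iteration for \(\underline d_\mu(x)\leq\log_2 C\), and the dense full-measure set together with the second part of Lemma~\ref{lem:propagation} for (b). The differences are cosmetic. The paper first proves \(\liminf_{r\downarrow0}D_x(r)\leq 2^s\) and then uses \(2^{s+1}>2^s\) only to extract the sequence, and it takes \(s=s_0+1/2\) in (b). You instead build the slack into the contradiction, comparing \(r^{s+1}\) decay against \(r^s\), and take \(s=s_0+1\).
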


\begin{proof}
We first show that, if \(s>\underline d_\mu(x)\), then
\[
\liminf_{r\downarrow0}D_x(r)\leq2^s.
\]
Suppose otherwise. Then there exists \(R_x>0\) such that
\[
\mu(B(x,r))<2^{-s}\mu(B(x,2r)),\qquad 0<r<R_x.
\]
Iteration at dyadic radii gives
\[
\mu(B(x,2^{-j}R_x)) \leq2^{-js}\mu(B(x,R_x)),\qquad j\geq0.
\]
If
\[
2^{-(j+1)}R_x<r\leq2^{-j}R_x,
\]
then
\[
\mu(B(x,r)) \leq 2^s\frac{\mu(B(x,R_x))}{R_x^s}\,r^s.
\]
Taking logarithms and dividing by \(\log r<0\), then letting
\(r\downarrow0\), gives
\[
\underline d_\mu(x)\geq s,
\]
contradicting the choice of \(s\).

This proves {\rm (iii)}\(\Rightarrow\){\rm (ii)}. Clearly, {\rm (i)}
implies {\rm (ii)}. Assume {\rm (ii)}, and choose
\[
D_0>\liminf_{r\downarrow0}D_x(r).
\]
There are radii \(r_j\downarrow0\) such that \(D_x(r_j)\leq D_0\).
By Lemma~\ref{lem:propagation},
\[
D_x(r)\leq M(D_0,C_P,X),\qquad r\geq r_j.
\]
Letting \(j\to\infty\) proves {\rm (i)}. If \(s>\underline
d_\mu(x)\), the first estimate above allows us to take
\(D_0=2^{s+1}\), which gives the stated bound.

To complete part {\rm (a)}, suppose that \(D_x(r)\leq C\) for every
\(r>0\). Then
\[
\mu(B(x,2^{-k})) \geq C^{-k}\mu(B(x,1)),\qquad k\geq0.
\]
Consequently,
\[
\frac{\log\mu(B(x,2^{-k}))}{\log 2^{-k}} \leq \log_2
C-\frac{\log\mu(B(x,1))}{k\log2},
\]
and hence
\[
\underline d_\mu(x)\leq\log_2 C.
\]
This proves {\rm (i)}\(\Rightarrow\){\rm (iii)} and completes the
proof of part {\rm (a)}.

We now prove part {\rm (b)}. If \(\mu\) is doubling with constant
\(C_D\), part {\rm (a)} gives
\[
\underline d_\mu(x)\leq\log_2 C_D,\qquad x\in\Omega,
\]
and hence \(\underline d_\mu\in L^\infty(\Omega,\mu)\).

Conversely, assume that
\[
\|\underline d_\mu\|_{L^\infty(\Omega,\mu)}\leq s_0,
\]
and let
\[
G=\{x\in\Omega:\underline d_\mu(x)\leq s_0\}.
\]
Then \(G\) has full measure and is therefore dense in \(\Omega\),
since every ball has positive measure. For \(x\in G\), the first
estimate in part {\rm (a)}, applied with \(s=s_0+1/2\), gives
\[
\liminf_{r\downarrow0}D_x(r)\leq2^{s_0+1/2}.
\]
Hence there exists a sequence \(r_j(x)\downarrow0\) such that
\[
D_x(r_j(x))\leq2^{s_0+1}.
\]
Lemma~\ref{lem:propagation}, applied with \(D_0=2^{s_0+1}\), shows
that \(\mu\) is doubling, with doubling constant depending only on
\(s_0\), \(C_P\), and \(X\).
\end{proof}

\subsection{Lower local order and small-ball growth}

We next consider the stronger functional condition
\[
\frac{U_X(t)}{\log t}\longrightarrow\infty \qquad\text{as
}t\to\infty.
\]
Recall that the lower local order is defined by
\[
\underline{\operatorname{ord}}_{\rm loc}\mu(x) =
\liminf_{r\downarrow0} \frac{\log\log(1/\mu(B(x,r)))}{\log(1/r)}.
\]
Under the stronger functional condition above, the radial growth
inequality can be iterated towards smaller scales. We first relate
finite lower local order to a small-ball condition.

\begin{lemma}
\label{lem:order-SB} For \(x\in\Omega\), the following conditions
are equivalent:
\begin{enumerate}
\item[\rm (i)]
\[
\underline{\operatorname{ord}}_{\rm loc}\mu(x)<\infty.
\]

\item[\rm (ii)]
There exist \(C,\beta>0\) and a sequence \(r_j\downarrow0\) such
that
\begin{equation}
\label{eq:SB} \mu(B(x,r_j)) \geq \exp(-Cr_j^{-\beta}), \qquad
j\geq1.
\end{equation}
\end{enumerate}
Moreover, if \(\underline d_\mu(x)<\infty\), then
\[
\underline{\operatorname{ord}}_{\rm loc}\mu(x)=0.
\]
\end{lemma}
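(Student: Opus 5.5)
The plan is to unwind the definition of the lower local order as a $\liminf$ and convert each condition into the other by taking logarithms twice. Throughout, note that for small $r$ we may assume $\mu(B(x,r))<1/e$. Otherwise $\liminf_{r\downarrow0}\mu(B(x,r))>0$ along some sequence (in fact $\mu(\{x\})>0$, since $V_x(r)$ decreases to $\mu(\{x\})$), and then both (i) and (ii) hold trivially. With $\mu(B(x,r))<1/e$ we have $\psi_x(r)=\log(1/\mu(B(x,r)))>1$, so $\log\psi_x(r)>0$ and the quotient defining the order is well defined and nonnegative. Since $\mu$ is nonatomic, the degenerate case is in fact excluded, but I would handle it anyway for robustness.

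For (i)$\Rightarrow$(ii), fix $\beta>\underline{\operatorname{ord}}_{\rm loc}\mu(x)$. By definition of the $\liminf$ there is a sequence $r_j\downarrow0$, with $r_j<1$, such that
\[
\frac{\log\psi_x(r_j)}{\log(1/r_j)}<\beta .
\]
Exponentiating gives $\psi_x(r_j)<r_j^{-\beta}$, that is, $\mu(B(x,r_j))>\exp(-r_j^{-\beta})$. This is (\ref{eq:SB}) with $C=1$.

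For (ii)$\Rightarrow$(i), (\ref{eq:SB}) gives $\psi_x(r_j)\leq Cr_j^{-\beta}$, hence
\[
\frac{\log\psi_x(r_j)}{\log(1/r_j)}\leq \beta+\frac{\log C}{\log(1/r_j)}\longrightarrow\beta .
\]
If $\psi_x(r_j)\le 1$ along the sequence, the quotient is nonpositive, which is even better. Therefore $\underline{\operatorname{ord}}_{\rm loc}\mu(x)\leq\beta<\infty$.

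For the final claim, if $\underline d_\mu(x)<\infty$, pick $s>\underline d_\mu(x)$. There are $r_j\downarrow0$ with $\log\mu(B(x,r_j))/\log r_j<s$, i.e. $\mu(B(x,r_j))> r_j^{s}$, so $\psi_x(r_j)< s\log(1/r_j)$. Then
\[
\frac{\log\psi_x(r_j)}{\log(1/r_j)}<\frac{\log s+\log\log(1/r_j)}{\log(1/r_j)}\to0 .
\]
Together with the nonnegativity of the quotient for small $r$ noted at the start, this gives $\underline{\operatorname{ord}}_{\rm loc}\mu(x)=0$.

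The only delicate point is the sign and size bookkeeping. One must ensure $\psi_x>1$ so that the double logarithm is positive, and check that the order is therefore a $\liminf$ of nonnegative quantities, so that $\le 0$ really yields $=0$. Everything else is direct manipulation of the definitions.
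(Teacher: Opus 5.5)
Your proof is correct and follows the paper's argument essentially step for step: you choose $\beta$ above the order and exponentiate, which gives (\ref{eq:SB}) with $C=1$; you divide $\log\psi_x(r_j)\le\log C+\beta\log(1/r_j)$ by $\log(1/r_j)$; and you use $\mu(B(x,r_j))\ge r_j^s$ to make the quotient tend to $0$. Your added check that $\psi_x(r)>1$ for small $r$ (since $V_x(r)\downarrow\mu(\{x\})=0$ by nonatomicity) spells out the nonnegativity that the paper uses without comment when it writes $0\le$ in the final step.
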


\begin{proof}
Suppose first that
\[
L=\underline{\operatorname{ord}}_{\rm loc}\mu(x)<\infty.
\]
Choose \(\beta>L\). By the definition of the lower local order,
there is a sequence \(r_j\downarrow0\) such that
\[
\frac{\log\log(1/\mu(B(x,r_j)))}{\log(1/r_j)} \leq\beta
\]
for all sufficiently large \(j\). Hence
\[
\log\frac1{\mu(B(x,r_j))} \leq r_j^{-\beta},
\]
which gives (\ref{eq:SB}), after discarding finitely many terms.

Conversely, if (\ref{eq:SB}) holds, then
\[
\log\frac1{\mu(B(x,r_j))} \leq Cr_j^{-\beta},
\]
and therefore
\[
\frac{\log\log(1/\mu(B(x,r_j)))}{\log(1/r_j)} \leq \beta+\frac{\log
C}{\log(1/r_j)}.
\]
Taking the lower limit gives
\[
\underline{\operatorname{ord}}_{\rm loc}\mu(x) \leq\beta<\infty.
\]

Finally, suppose that \(\underline d_\mu(x)<\infty\), and choose
\(s>\underline d_\mu(x)\). By the definition of the lower local
dimension, there is a sequence \(r_j\downarrow0\) such that
\[
\mu(B(x,r_j))\geq r_j^s.
\]
Thus
\[
0\leq \frac{\log\log(1/\mu(B(x,r_j)))}{\log(1/r_j)} \leq \frac{\log
s+\log\log(1/r_j)}{\log(1/r_j)} \longrightarrow0,
\]
and hence
\[
\underline{\operatorname{ord}}_{\rm loc}\mu(x)=0.
\]
\end{proof}

\begin{lemma}[Small-ball growth and doubling]
\label{lem:vertical} Assume that the testing \((X,L^1)\)-Poincar\'e
inequality (\ref{eq:PI-B}) holds and
\[
\frac{U_X(t)}{\log t}\longrightarrow\infty \qquad\text{as
}t\to\infty.
\]
Then there exists \(T_0\geq1\), depending only on \(C_P\) and \(X\),
with the following property. If \(x\in\Omega\), \(R>0\), and
\begin{equation}
\label{eq:vertical-hyp} \log D_x(2R)\geq T_0,
\end{equation}
then, for every \(\beta>0\),
\begin{equation}
\label{eq:superpower} \lim_{s\downarrow0}
s^\beta\log\frac1{\mu(B(x,s))} = \infty.
\end{equation}
Consequently, if
\[
\underline{\operatorname{ord}}_{\rm loc}\mu(x)<\infty,
\]
then
\begin{equation}
\label{eq:order-doubling} \sup_{r>0}D_x(r)\leq e^{T_0}.
\end{equation}
\end{lemma}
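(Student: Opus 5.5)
The idea is to iterate the radial growth inequality of Lemma~\ref{lem:radial} \emph{downwards} in scale. Read backwards, \eqref{eq:radial} says that a large doubling ratio at scale $2R$ forces an even larger doubling ratio at scale $R$. Write $a_n=\log D_x(2^{1-n}R)$ for $n\geq0$, so $a_0=\log D_x(2R)$. Lemma~\ref{lem:radial} applied with radius $2^{-n}R$ gives
\[
U_X(e^{a_n})\leq C_0(1+a_{n+1}).
\]
Since $U_X(t)/\log t\to\infty$, for any prescribed $K$ (say $K=4$) there is $T_0=T_0(C_P,X)$ with $U_X(e^{a})\geq 2KC_0\,a$ whenever $a\geq T_0$. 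Hence $a_n\geq T_0$ implies $1+a_{n+1}\geq 2Ka_n$, so $a_{n+1}\geq Ka_n$ once $T_0\geq1$. By induction, $a_n\geq K^na_0\geq K^nT_0$ for every $n$. The point is that we get geometric growth with a ratio $K$ that can be taken as large as we like. The hypothesis is only $\limsup=\infty$ in Theorem~\ref{thm:finite-sharp}, whereas here it is a genuine limit, and this is exactly what keeps $K$ uniform along the whole iteration. Let me double-check the direction of \eqref{eq:radial}: its left side is evaluated at $D_x(2R)$ and its right side at $D_x(R)$, so applying it at $R_n=2^{-n}R$ gives $U_X(e^{a_n})\leq C_0(1+a_{n+1})$, as needed.

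Next I convert this into small-ball growth. Set $s_n=2^{-n}R$. Then
\[
\log\frac{1}{\mu(B(x,s_{n+1}))}\geq \log\frac1{\mu(B(x,2R))}+\sum_{k=0}^{n}a_k\geq a_n-|\log \mu(B(x,2R))|,
\]
because $\mu(B(x,s_{k+1}))=\mu(B(x,2s_{k+1}))/D_x(s_{k+1})$ telescopes from radius $2R$ down to $s_{n+1}$. So $\psi_x(s_n)\gtrsim K^n\simeq (R/s_n)^{\log_2K}$ up to additive constants. Given $\beta>0$, I choose $K>2^\beta$; this requires only enlarging $T_0$ depending on $\beta$. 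That is a problem, because $T_0$ should be independent of $\beta$.

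The fix is to bootstrap. With a fixed $K=4$ and $T_0$ fixed, we get $a_n\to\infty$. Then for any larger $K'$ there is $T_0'(K')$, and eventually $a_n\geq T_0'$, so from that index on the growth ratio is $K'$. Thus $a_n\geq cK'^{\,n}$ for large $n$, with $c>0$ depending on $x,R,K'$. Taking $K'>2^{\beta}$ gives $s_n^\beta\psi_x(s_n)\to\infty$ along dyadic radii. For a general $s\in[s_{n+1},s_n]$, monotonicity of $\psi_x$ gives $s^\beta\psi_x(s)\geq 2^{-\beta}s_{n+1}^{\beta}\psi_x(s_{n+1})$, which yields \eqref{eq:superpower}.

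For the consequence, suppose $\underline{\operatorname{ord}}_{\rm loc}\mu(x)<\infty$. By Lemma~\ref{lem:order-SB} there are $C,\beta$ and $r_j\downarrow0$ with $\psi_x(r_j)\leq Cr_j^{-\beta}$. Then $r_j^{\beta}\psi_x(r_j)\leq C$, which contradicts \eqref{eq:superpower}. Hence $\log D_x(2R)<T_0$ for every $R>0$, that is, $D_x(r)\leq e^{T_0}$ for all $r>0$.

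I expect the main obstacle to be the bookkeeping in the bootstrap, namely making $T_0$ depend only on $C_P$ and $X$ while still beating every power $\beta$. The other delicate point is checking the orientation of indices in \eqref{eq:radial}.
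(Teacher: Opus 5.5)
Your proposal is correct and takes the same route as the paper. A fixed ratio (the paper uses $\Lambda=3$ with $T_0=T(3)$) forces $\log D_x(2^{1-k}R)\to\infty$; the threshold for any larger ratio $\Lambda>2^\beta$ is then eventually crossed, and telescoping $\log(1/\mu(B(x,\cdot)))$ over dyadic radii gives (\ref{eq:superpower}), with the consequence obtained from Lemma~\ref{lem:order-SB} exactly as you do. One harmless slip: for $s\in[s_{n+1},s_n]$, monotonicity gives $s^\beta\psi_x(s)\geq s_{n+1}^\beta\psi_x(s_n)=2^{-\beta}s_n^\beta\psi_x(s_n)$, not a bound in terms of $\psi_x(s_{n+1})\geq\psi_x(s)$, but this corrected bound still yields the limit.
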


\begin{proof}
Let \(C_0\) be the constant of Lemma~\ref{lem:radial}. For every
\(\Lambda\geq1\), there exists \(T(\Lambda)\geq1\), depending only
on \(C_P\), \(X\), and \(\Lambda\), such that
\[
U_X(t) \geq C_0(\Lambda+1)\log t \qquad\text{whenever }\log t\geq
T(\Lambda).
\]

Fix \(x\), write
\[
\ell(r)=\log D_x(r),
\]
and set \(T_0=T(3)\). We first claim that
\begin{equation}
\label{eq:vertical-step} \ell(r)\geq T(\Lambda)
\quad\Longrightarrow\quad \ell(r/2)\geq\Lambda\ell(r).
\end{equation}
Indeed, applying (\ref{eq:radial}) with \(R=r/2\) gives
\[
C_0(\Lambda+1)\ell(r) \leq U_X(D_x(r)) \leq
C_0\bigl(1+\ell(r/2)\bigr).
\]
Since \(\ell(r)\geq T(\Lambda)\geq1\),
\[
\ell(r/2) \geq (\Lambda+1)\ell(r)-1 \geq \Lambda\ell(r),
\]
which proves (\ref{eq:vertical-step}).

Assume now that (\ref{eq:vertical-hyp}) holds, and set
\[
r_k=2^{1-k}R,\qquad k\geq0.
\]
Applying (\ref{eq:vertical-step}) with \(\Lambda=3\) repeatedly, we
obtain
\[
\ell(r_k)\geq3^k\ell(r_0), \qquad k\geq0.
\]
In particular, \(\ell(r_k)\to\infty\).

Given \(\Lambda\geq3\), choose \(k_0\) such that
\[
\ell(r_{k_0})\geq T(\Lambda).
\]
A further iteration of (\ref{eq:vertical-step}) gives
\[
\ell(r_k)\geq\Lambda^{k-k_0}, \qquad k\geq k_0.
\]

Set
\[
H(r)=\log\frac1{V_x(r)}.
\]
Since
\[
H(r)-H(2r)=\ell(r),
\]
telescoping gives
\[
H(r_k) = H(4R)+\sum_{j=0}^k\ell(r_j) \geq H(4R)+\Lambda^{k-k_0}.
\]
If \(r_{k+1}<s\leq r_k\), then, by monotonicity of \(V_x\),
\(H(s)\geq H(r_k)\), while \(r_{k+1}=2^{-k}R<s\) gives \(2^k>R/s\).
Hence, for all sufficiently small \(s\),
\[
\log\frac1{\mu(B(x,s))} \geq H(4R) + \Lambda^{-k_0}
\left(\frac{R}{s}\right)^{\log_2\Lambda}.
\]
Choosing \(\Lambda>\max\{3,2^\beta\}\), we get
(\ref{eq:superpower}).

Finally, suppose that
\[
\underline{\operatorname{ord}}_{\rm loc}\mu(x)<\infty.
\]
By Lemma~\ref{lem:order-SB}, there exist \(C,\beta>0\) and a
sequence \(r_j\downarrow0\) such that
\[
\mu(B(x,r_j)) \geq \exp(-Cr_j^{-\beta}).
\]
To prove (\ref{eq:order-doubling}), assume that
\[
\log D_x(r)\geq T_0
\]
for some \(r>0\). Applying the first part of the lemma with
\(2R=r\), we obtain
\[
r_j^\beta\log\frac1{\mu(B(x,r_j))} \longrightarrow\infty.
\]
On the other hand,
\[
r_j^\beta\log\frac1{\mu(B(x,r_j))} \leq C,
\]
which is impossible. Therefore
\[
\log D_x(r)<T_0 \qquad\text{for every }r>0,
\]
and hence
\[
\sup_{r>0}D_x(r)\leq e^{T_0},
\]
as required.
\end{proof}

\subsection{The logarithmic endpoint}
\label{sec:log-endpoint}

An absolutely continuous log-concave measure on \(\mathbb R^n\) has
density \(e^{-V}\), where \(V:\mathbb R^n\to(-\infty,\infty]\) is
convex. Restrictions to convex sets remain log-concave. For a
probability measure \(\nu\), write
\[
\nu^+(A)=\liminf_{\varepsilon\downarrow0}
\frac{\nu(A_\varepsilon)-\nu(A)}{\varepsilon}, \qquad
A_\varepsilon=\{x:\operatorname{dist}(x,A)<\varepsilon\}.
\]
Its isoperimetric profile is
\[
I_\nu(s)=\inf\{\nu^+(A):\nu(A)=s\}, \qquad 0<s<1,
\]
and we set \(I_\nu(0)=0\) (see \cite{BobkovHoudre}).

Bobkov's dimension-free isoperimetric inequality for log-concave
probability measures (see \cite[Corollary~2.2]{Bobkov}) yields the
following local estimate and its \(L\log L\)-Poincar\'e consequence.

\begin{proposition}[Dimension-free logarithmic endpoint]
\label{prop:logPI} Let \(\mu\) be an absolutely continuous
log-concave measure on \(\mathbb R^n\), and let \(B=B(x_0,r_B)\) be
a Euclidean ball such that \(\mu(B)>0\). Then
\begin{equation}
\label{eq:logiso} 2r_B I_{\mu_B}(s) \geq s\log\frac1s +
(1-s)\log\frac1{1-s}, \qquad 0<s\leq\frac12.
\end{equation}
In particular,
\begin{equation}
\label{eq:logisophi} I_{\mu_B}(s) \succeq \frac1{r_B}s\log\frac es,
\qquad 0<s\leq\frac12.
\end{equation}
Moreover,
\begin{equation}
\label{eq:logPI} \|f-f_{B,\mu}\|_{L^{\Phi_0}(B,\mu_B)} \preceq
r_B\int_Bg\,d\mu_B
\end{equation}
for every measurable \(f\in L^1(B,\mu_B)\) and every \(g\in
L^1(B,\mu_B)\) which is an upper gradient of \(f\) along rectifiable
curves contained in \(B\). The implicit constant is absolute.
\end{proposition}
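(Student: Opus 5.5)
The proof has three steps: a geometric reduction, an application of Bobkov's inequality, and a coarea/layer-cake argument.

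\emph{Step 1: reduction to a log-concave probability measure.} The restriction of \(\mu\) to the convex set \(B\), normalized, is \(\mu_B\). Its density is \(e^{-V}\chi_B/\mu(B)\), which is again log-concave. The support of \(\mu_B\) is contained in \(\overline B\), so its diameter is at most \(2r_B\).

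\emph{Step 2: the isoperimetric inequality (\ref{eq:logiso}).} Bobkov's dimension-free inequality for log-concave probability measures \cite[Corollary~2.2]{Bobkov} gives
\[
\nu^+(A)\,\operatorname{diam}(\operatorname{supp}\nu) \geq \nu(A)\log\frac1{\nu(A)}+(1-\nu(A))\log\frac1{1-\nu(A)} .
\]
If the version available there uses a median-distance quantity, we bound that quantity by the diameter. Applying this with \(\nu=\mu_B\), whose support has diameter at most \(2r_B\), yields (\ref{eq:logiso}). For \(0<s\leq1/2\) the entropy term on the right is at least \(s\log(1/s)\), and \(s\log(1/s)\simeq s\log(e/s)\) on this range. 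This gives (\ref{eq:logisophi}) with an absolute constant.

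\emph{Step 3: from isoperimetry to the \(L\log L\)-Poincar\'e inequality (\ref{eq:logPI}).} We follow the Maz'ya--Federer--Fleming/Bobkov--Houdr\'e scheme. First we replace \(f_{B,\mu}\) by a median \(m\) of \(f\) with respect to \(\mu_B\). Indeed, \(\|f-f_{B,\mu}\|\leq\|f-m\|+|m-f_{B,\mu}|\), and \(|m-f_{B,\mu}|\leq\|f-m\|_{L^1(\mu_B)}\leq\|f-m\|_{L^{\Phi_0}(\mu_B)}\), so it suffices to bound \(f-m\). We then treat \((f-m)_+\) and \((f-m)_-\) separately. For \(h=(f-m)_+\), all level sets \(A_t=\{h>t\}\), \(t>0\), satisfy \(\mu_B(A_t)\leq1/2\). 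By Lemma~\ref{lem:LlogL-X} and the layer-cake computation in its proof, applied with \(X=L^{\Phi_0}\),
\[
\|h\|_{L^{\Phi_0}(\mu_B)}\simeq\int_0^1h^*(s)\log\frac es\,ds=\int_0^\infty\varphi(\mu_B(A_t))\,dt, \qquad \varphi(s)=s\log\frac es .
\]
By (\ref{eq:logisophi}), \(\varphi(\mu_B(A_t))\preceq r_B\,\mu_B^+(A_t)\). The coarea inequality then gives
\[
\int_0^\infty\mu_B^+(A_t)\,dt\leq\int_B g\,d\mu_B .
\]

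\emph{Main obstacle.} The delicate point is this coarea inequality when \(g\) is only an upper gradient along curves contained in \(B\) and \(f\) is merely measurable. The plan is as follows.
\begin{enumerate}
\item Since \(B\) is convex, segments inside \(B\) are admissible curves. Hence \(f\) is locally Lipschitz along segments wherever \(g\) is finite, and a standard argument (\(g\in L^1\), Fubini along lines) shows that \(f\in W^{1,1}(B)\) with \(|\nabla f|\leq g\) almost everywhere. Here we use that \(\mu\) has a density which is locally bounded below on the interior of its support.
\item Approximate \(f\) by Lipschitz functions \(f_k\) on \(B\) with \(\int|\nabla f_k|\,d\mu_B\to\int|\nabla f|\,d\mu_B\) and \(f_k\to f\) in \(L^1(\mu_B)\). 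For each \(f_k\) the coarea inequality \(\int_0^\infty\mu_B^+(\{f_k>t\})\,dt\leq\int|\nabla f_k|\,d\mu_B\) is classical.
\item Pass to the limit using lower semicontinuity of the \(L^{\Phi_0}\) norm (the Fatou property).
\end{enumerate}
Alternatively, one can avoid coarea altogether: the functional form of Bobkov's inequality for Lipschitz \(h\) directly gives \(\int\varphi\)-type bounds, and the same approximation then yields (\ref{eq:logPI}).

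In every step the constants are either absolute or come from Bobkov's inequality, which is dimension-free. Hence the implicit constant in (\ref{eq:logPI}) is absolute.
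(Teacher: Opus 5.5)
Your Steps 1--2 and the Lipschitz part of Step 3 (median, splitting into \((f-m)_\pm\), coarea, layer-cake with \(\varphi(s)=s\log(e/s)\)) coincide with the paper's argument. The gap is in the passage to a general pair \((f,g)\), which is exactly where you locate the difficulty.

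\textbf{The Sobolev claim.} The assertion \(f\in W^{1,1}(B)\) is false in general. The function \(g\) is only integrable against \(\mu_B\), whose density may vanish on part of \(B\) (when \(B\not\subset\operatorname{supp}\mu\)) or degenerate at \(\partial(\operatorname{supp}\mu)\). For instance, take \(w(x)=e^{-1/x}\) on \((0,\infty)\) (as in Remark~\ref{rem:1d}), \(B=(0,2)\) and \(f(x)=e^{1/(2x)}\). Then \(f\in L^1(B,\mu_B)\) and \(\int_B|f'|\,d\mu_B<\infty\), but \(f\notin L^1(B,dx)\). What the line argument actually gives is absolute continuity of \(f\) on almost every segment inside \(D=B\cap\operatorname{int}(\operatorname{supp}\mu)\). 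It does not give local Lipschitz continuity along segments. Hence you only get \(f\in W^{1,1}_{\rm loc}(D)\) with \(|\nabla f|\le g\) a.e.\ in \(D\).

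\textbf{The approximation step.} Your item (2) asks for Lipschitz \(f_k\) on \(B\) with \(f_k\to f\) in \(L^1(\mu_B)\) and \(\int|\nabla f_k|\,d\mu_B\to\int|\nabla f|\,d\mu_B\). This does not follow from standard mollification. Mollification only controls things on compact subsets of \(D\), where the density is comparable to Lebesgue measure. Near \(\partial D\) an additional argument is required, and none is given.

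\textbf{How the paper handles it.} The paper avoids global approximation altogether.
\begin{enumerate}
\item It exhausts \(D\) by the compact convex sets \(K_j=x_*+t_j(\overline D-x_*)\Subset D\).
\item It applies the Lipschitz estimate to the normalized restrictions \(\rho_j=\mu_B|_{K_j}/\mu_B(K_j)\). These are still log-concave and concentrated in \(\overline B(x_0,r_B)\), so the absolute constant is unchanged.
\item On \(K_j\) the density is bounded above and below. Local mollification and the Fatou property therefore give \(\|f-f_{\rho_j}\|_{L^{\Phi_0}(\rho_j)}\le Cr_B\int_{K_j}g\,d\rho_j\).
\item Finally it lets \(j\to\infty\), using \(f_{\rho_j}\to f_{B,\mu}\), \(\int_{K_j}g\,d\rho_j\to\int_Bg\,d\mu_B\), and Fatou's lemma applied to the Luxemburg modular.
\end{enumerate}
Your global route can also be repaired. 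For example, first replace \(f\) by \(f(x_*+t(x-x_*))\) with \(x_*\in D\) and \(t<1\), control the density along rays from \(x_*\) by log-concavity, and only then mollify. Some such argument is, however, indispensable, and the proposal as written does not supply it.
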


\begin{proof}
Since \(B\) is convex, the normalized restriction \(\mu_B\) is a
log-concave probability measure concentrated in \(\overline
B(x_0,r_B)\). Hence (\ref{eq:logiso}) follows directly from
\cite[Corollary~2.2]{Bobkov}.

We next prove the \(L\log L\)-Poincar\'e estimate (\ref{eq:logPI}).
For simplicity, write
\[
\rho=\mu_B,\qquad \varphi(s)=s\log(e/s),\quad 0\leq s\leq1.
\]
First assume that \(f\) is Lipschitz. Let \(m_f\) be a median of
\(f\),\footnote{That is, \(\rho(\{f>m_f\})\leq1/2\) and
\(\rho(\{f<m_f\})\leq1/2\).} and set
\[
h_+=(f-m_f)_+,\qquad h_-=(m_f-f)_+.
\]
For either choice \(h=h_+\) or \(h=h_-\), the distribution function
\(\lambda_h(t)=\rho\{h>t\}\) satisfies \(\lambda_h(t)\leq1/2\) for
\(t>0\).

By the coarea form of the isoperimetric--Sobolev inequality (see
\cite[Sections~2--4]{BobkovHoudre}), (\ref{eq:logisophi}), and
(\ref{eq:Phi0inverse}), we obtain
\begin{equation}
\label{eq:coarea-log}
\begin{aligned}
r_B\int_B|\nabla h|\,d\rho &\geq r_B\int_0^\infty
I_\rho(\lambda_h(t))\,dt \succeq
\int_0^\infty\varphi(\lambda_h(t))\,dt
\\
&\succeq \int_0^\infty \phi_{L^{\Phi_0}}(\lambda_h(t))\,dt.
\end{aligned}
\end{equation}
By the layer-cake representation and the triangle inequality,
\[
\|h\|_{L^{\Phi_0}(B,\rho)} \leq \int_0^\infty
\|\chi_{\{h>t\}}\|_{L^{\Phi_0}(B,\rho)}\,dt = \int_0^\infty
\phi_{L^{\Phi_0}}(\lambda_h(t))\,dt.
\]
Hence, by (\ref{eq:coarea-log}),
\[
\|h\|_{L^{\Phi_0}(B,\rho)} \preceq r_B\int_B|\nabla h|\,d\rho.
\]
Applying the preceding estimate to \(h_+\) and \(h_-\), and using
\(|\nabla h_+|+|\nabla h_-|=|\nabla f|\) almost everywhere, we
obtain
\[
\|f-m_f\|_{L^{\Phi_0}(B,\rho)} \preceq r_B\int_B|\nabla f|\,d\rho.
\]
Since \(\Phi_0(1)=1\), we have
\(\|u\|_{L^1(B,\rho)}\leq\|u\|_{L^{\Phi_0}(B,\rho)}\) and
\(\|1\|_{L^{\Phi_0}(B,\rho)}=1\). Hence
\[
\|f-f_{B,\mu}\|_{L^{\Phi_0}(B,\rho)} \leq
2\|f-m_f\|_{L^{\Phi_0}(B,\rho)} \preceq r_B\int_B|\nabla f|\,d\rho.
\]

We now pass to a general \(f\in L^1(B,\rho)\) with an integrable
upper gradient \(g\). Let
\[
D=B\cap\operatorname{int}(\operatorname{supp}\mu).
\]
Since \(\mu\) is absolutely continuous and log-concave, \(D\) is
open and convex, with \(\rho(D)=1\). Moreover, the density \(w\) of
\(\rho\) has a representative which is positive and continuous on
\(D\), and hence is bounded above and bounded away from zero on
compact subsets of \(D\). Consequently,
\[
f,g\in L^1_{\rm loc}(D,dx).
\]

Fix a unit vector \(v\). By Fubini's theorem, \(g\) is locally
integrable on almost every line parallel to \(v\). The
upper-gradient inequality on subsegments of such lines shows that
\(f\) is locally absolutely continuous there and
\[
\left|\frac{d}{dt}f(x+tv)\right| \leq g(x+tv)
\]
for almost every \(t\).

Taking \(v\) successively to be the coordinate directions, the
standard characterization of Sobolev functions by absolute
continuity on lines gives
\[
f\in W^{1,1}_{\rm loc}(D)
\]
(see \cite[Chapter~4]{EvansGariepy}). Repeating the preceding
argument for every \(v\) in a countable dense subset \(\mathcal D\)
of the Euclidean unit sphere gives
\[
|\nabla f(x)\cdot v|\leq g(x), \qquad v\in\mathcal D,
\]
outside a null set depending on \(v\). Since \(\mathcal D\) is
countable, the union of these exceptional null sets is still null.
Fix \(x\) outside this union. Then
\[
|\nabla f(x)\cdot v|\leq g(x), \qquad v\in\mathcal D.
\]
For any \(u\in S^{n-1}\), choose \(v_k\in\mathcal D\) with \(v_k\to
u\). By continuity of \(v\mapsto\nabla f(x)\cdot v\),
\[
|\nabla f(x)\cdot u| = \lim_{k\to\infty}|\nabla f(x)\cdot v_k| \leq
g(x).
\]
Hence the same inequality holds for every unit vector \(u\), and
therefore
\[
|\nabla f(x)| = \sup_{|u|=1}|\nabla f(x)\cdot u| =
\sup_{v\in\mathcal D}|\nabla f(x)\cdot v| \leq g(x).
\]
Thus
\[
|\nabla f|\leq g \qquad\text{a.e. in }D.
\]

Choose \(x_*\in D\) and \(0<t_j\uparrow1\), and set
\[
K_j=x_*+t_j(\overline D-x_*),\qquad
\rho_j=\frac{\rho|_{K_j}}{\rho(K_j)}.
\]
Then \(K_j\Subset D\), \(K_j\uparrow D\), and each \(\rho_j\) is
log-concave and concentrated in \(\overline B(x_0,r_B)\).

Fix \(j\). Since \(f\in W^{1,1}_{\rm loc}(D)\), standard local
approximation by mollification gives smooth functions \(f_\ell\) on
a neighborhood of \(K_j\) such that
\[
f_\ell\to f,\qquad \nabla f_\ell\to\nabla f
\]
in \(L^1(K_j,\rho_j)\) (see \cite[Chapter~4]{EvansGariepy}). Since
\(K_j\) is compact and convex, \(f_\ell|_{K_j}\) is Lipschitz and
admits a Lipschitz extension to \(\mathbb R^n\). Applying the
Lipschitz estimate proved above to \(\rho_j\), we obtain
\[
\|f_\ell-(f_\ell)_{\rho_j}\|_{L^{\Phi_0}(K_j,\rho_j)} \leq
Cr_B\int_{K_j}|\nabla f_\ell|\,d\rho_j,
\]
where \(u_{\rho_j}=\int u\,d\rho_j\). Passing to an almost
everywhere convergent subsequence and using the Fatou property gives
\[
\|f-f_{\rho_j}\|_{L^{\Phi_0}(K_j,\rho_j)} \leq Cr_B\int_{K_j}|\nabla
f|\,d\rho_j \leq Cr_B\int_{K_j}g\,d\rho_j.
\]

We now let \(j\to\infty\). Since \(\rho(D)=1\), we have
\[
\rho(K_j)\to1,\qquad f_{\rho_j}\to f_{B,\mu},\qquad
\int_{K_j}g\,d\rho_j\to\int_Bg\,d\rho.
\]
Thus, for every \(\lambda>Cr_B\int_Bg\,d\rho\), the preceding
estimate gives, for all sufficiently large \(j\),
\[
\int_{K_j} \Phi_0\left(\frac{|f-f_{\rho_j}|}{\lambda}\right)\,d\rho
\leq\rho(K_j).
\]
Fatou's lemma yields
\[
\int_B \Phi_0\left(\frac{|f-f_{B,\mu}|}{\lambda}\right)\,d\rho
\leq1.
\]
Taking the infimum over all such \(\lambda\) proves
(\ref{eq:logPI}).
\end{proof}

Since the standard Gaussian measures are non-doubling,
Proposition~\ref{prop:logPI} immediately gives the logarithmic
endpoint examples needed below.

\begin{corollary}
\label{cor:Gaussian} For every \(n\geq1\), let \(\gamma_n\) be the
standard Gaussian measure on \(\mathbb R^n\). Then \(\gamma_n\) is
non-doubling, but satisfies
\begin{equation*}
\|f-f_{B,\gamma_n}\|_{L^{\Phi_0}(B,(\gamma_n)_B)} \preceq
r_B\frac1{\gamma_n(B)} \int_Bg\,d\gamma_n
\end{equation*}
for every Euclidean ball \(B\), every measurable \(f\in
L^1(B,\gamma_n)\), and every integrable upper gradient \(g\) of
\(f\) along curves in \(B\). The implicit constant is independent of
\(n\), \(B\), \(f\), and \(g\).
\end{corollary}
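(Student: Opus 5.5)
The corollary has two parts: the Poincaré estimate follows from Proposition~\ref{prop:logPI}, and non-doubling is a direct computation of Gaussian ball measures.

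\emph{Poincaré estimate.} The standard Gaussian \(\gamma_n\) has density \((2\pi)^{-n/2}e^{-|x|^2/2}\). This is \(e^{-V}\) with \(V(x)=|x|^2/2+\tfrac n2\log(2\pi)\) convex, so \(\gamma_n\) is an absolutely continuous log-concave measure on \(\mathbb R^n\). Every Euclidean ball has \(\gamma_n(B)>0\), so Proposition~\ref{prop:logPI} applies to each \(B\). Since \(\int_B g\,d(\gamma_n)_B=\gamma_n(B)^{-1}\int_B g\,d\gamma_n\), the estimate (\ref{eq:logPI}) is exactly the displayed inequality. Its implicit constant is absolute, hence independent of \(n\), \(B\), \(f\), and \(g\).

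\emph{Non-doubling.} Work along the ray \(x=te_1\) with \(t\to\infty\) and fixed radius \(r=1\). The ball \(B(te_1,2)\) contains the unit ball \(B(te_1-e_1,1)\), and every point of that smaller ball has first coordinate at most \(t\). On \(B(te_1,1)\), every point has first coordinate at least \(t-1\). This gives two bounds.
\begin{itemize}
\item The upper bound \(\gamma_n(B(te_1,1))\le \gamma_1([t-1,\infty))\) holds by projecting onto the first coordinate.
\item The lower bound \(\gamma_n(B(te_1,2))\ge \gamma_n(B(te_1-e_1,1))\) holds by inclusion.
\end{itemize}
Comparing densities pointwise under the translation \(y\mapsto y+e_1\),
\[
e^{-|y|^2/2}\Big/e^{-|y+e_1|^2/2}=e^{y_1+1/2},\qquad y_1\ge t-1,
\]
so
\[
\gamma_n(B(te_1-e_1,1))\ge e^{t-1/2}\,\gamma_n(B(te_1,1)).
\]
Hence \(D_{te_1}(1)\ge e^{t-1/2}\to\infty\), and \(\gamma_n\) is not doubling. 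The translation argument needs no tail asymptotics, which is why it is preferable to estimating \(\gamma_1([t-1,\infty))\) directly.

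The only point needing care is that the corollary's class of pairs \((f,g)\) matches the hypotheses of Proposition~\ref{prop:logPI}. Integrable upper gradients along curves in \(B\) with \(f\in L^1\) are exactly what that proposition allows, so nothing further is required.
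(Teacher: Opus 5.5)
Your proposal is correct and takes the paper's route: the paper also obtains the inequality by applying Proposition~\ref{prop:logPI} to the log-concave measure \(\gamma_n\), and it treats the non-doubling of Gaussian measures as a known fact, which your translation argument supplies. There is one harmless slip in that argument: for \(y\in B(te_1-e_1,1)\) you only have \(y_1>t-2\), since the bound \(y_1>t-1\) holds for the translated point \(y+e_1\); the comparison therefore gives \(\gamma_n(B(te_1-e_1,1))\geq e^{t-3/2}\gamma_n(B(te_1,1))\) rather than \(e^{t-1/2}\), and this still yields \(D_{te_1}(1)\to\infty\).
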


\section{Proofs of the main theorems}
\label{sec:main-proofs}

We now combine the results established in the preceding section to
prove the two main theorems.

\subsection{Proof of Theorem~\ref{thm:finite-sharp}}
\label{sec:finite-proof}

\begin{proof}
By Lemma~\ref{lem:LlogL-X}, conditions {\rm (i)} and {\rm (iii)} are
equivalent. To prove {\rm (i)}\(\Rightarrow\){\rm (ii)}, let
\((\Omega,d,\mu)\) satisfy the assumptions in {\rm (ii)} and assume
that {\rm (i)} holds. Then, by Lemma~\ref{lem:LlogL-X},
\[
\sup_{t\geq2}\frac{U_X(t)}{\log t}=\infty.
\]
Since \(\underline d_\mu\in L^\infty(\Omega,\mu)\), part {\rm (b)}
of Lemma~\ref{lem:dim-point} implies that \(\mu\) is doubling, with
the stated dependence of the constant.

To prove {\rm (ii)}\(\Rightarrow\){\rm (i)}, suppose that {\rm (i)}
fails. By Lemma~\ref{lem:LlogL-X},
\[
L^{\Phi_0}(0,1)\hookrightarrow X.
\]
Combining this embedding with Corollary~\ref{cor:Gaussian}, the
standard Gaussian measure \(\gamma_1\) satisfies the testing
\((X,L^1)\)-Poincar\'e inequality (\ref{eq:PI-B}) with \(\sigma=1\).
Moreover, \(\gamma_1\) has a continuous positive density, and hence
\[
\underline d_{\gamma_1}\equiv1,
\]
whereas \(\gamma_1\) is not doubling. Thus {\rm (ii)} fails, and
therefore {\rm (ii)}\(\Rightarrow\){\rm (i)}.

It remains to verify the fixed-dimensional assertion. Fix
\(n\geq1\). Under {\rm (i)}, every measure \(\mu\) as in the
statement satisfies
\[
\dim_H^+\mu\leq n.
\]
By Lemma~\ref{lem:dim},
\[
\underline d_\mu\in L^\infty(\mathbb R^n,\mu),
\]
and hence part {\rm (b)} of Lemma~\ref{lem:dim-point} shows that
\(\mu\) is doubling. If {\rm (i)} fails, the same embedding and
Corollary~\ref{cor:Gaussian} show that \(\gamma_n\) satisfies
(\ref{eq:PI-B}) but is not doubling. This proves the
fixed-dimensional characterization.
\end{proof}

Since
\[
\dim_H^+\mu\leq\dim_H\Omega,
\]
Lemma~\ref{lem:dim} and part {\rm (b)} of Lemma~\ref{lem:dim-point}
immediately give the following consequence.

\begin{corollary}
\label{cor:finite} Assume that the testing \((X,L^1)\)-Poincar\'e
inequality (\ref{eq:PI-B}) holds and
\[
\sup_{t\geq2}\frac{U_X(t)}{\log t}=\infty.
\]
If \(\dim_H\Omega<\infty\), then \(\mu\) is doubling.
\end{corollary}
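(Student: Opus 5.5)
The plan is to obtain Corollary~\ref{cor:finite} by chaining Lemma~\ref{lem:dim} with part {\rm (b)} of Lemma~\ref{lem:dim-point}, using only the inequality \(\dim_H^+\mu\leq\dim_H\Omega\) recorded in Section~\ref{sec:pre}.

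First, I would verify that \(\dim_H^+\mu\leq\dim_H\Omega<\infty\). This is immediate from the definition: \(F=\Omega\) is an admissible Borel set in the infimum defining \(\dim_H^+\mu\), since \(\mu(\Omega\setminus\Omega)=0\). Lemma~\ref{lem:dim} then applies directly and gives
\[
\underline d_\mu\in L^\infty(\Omega,\mu),\qquad
\|\underline d_\mu\|_{L^\infty(\Omega,\mu)}\leq\dim_H\Omega .
\]

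Second, the standing hypotheses of Lemma~\ref{lem:dim-point} are exactly those of the corollary: the testing inequality (\ref{eq:PI-B}) and \(\sup_{t\geq2}U_X(t)/\log t=\infty\). Part {\rm (b)} of that lemma then shows that \(\mu\) is doubling. Its quantitative form, with \(s_0=\dim_H\Omega\), gives a doubling constant depending only on \(\dim_H\Omega\), \(C_P\), and \(X\).

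I do not expect a genuine obstacle, since all the analytic content sits in the earlier lemmas. The only point worth checking is the density step hidden inside Lemma~\ref{lem:dim-point}(b). There, the set where \(\underline d_\mu\leq s_0\) is only of full measure, but it is dense because every ball has positive \(\mu\)-measure. That property is part of the standing assumptions, so the argument goes through unchanged.
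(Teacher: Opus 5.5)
Your proposal is correct and matches the paper's own argument, which obtains the corollary immediately from $\dim_H^+\mu\leq\dim_H\Omega$, Lemma~\ref{lem:dim}, and part~(b) of Lemma~\ref{lem:dim-point}. Your observations that $F=\Omega$ is admissible and that the full-measure set is dense because balls have positive measure are exactly the points needed.
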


\begin{remark}
\label{rem:1d} In dimension one, the Poincar\'e inequality can be
written in terms of the derivative. Let \(J\subset\mathbb R\) be a
nondegenerate interval, and let \(d\mu=w\,dx\), with \(w>0\) almost
everywhere and \(\mu(I)<\infty\) for every bounded interval
\(I\subset J\). Consider
\begin{equation}
\label{eq:PI1d} \|f-f_{I,\mu}\|_{X(I,\mu_I)} \leq
C_P|I|\,\|f'\|_{L^1(I,\mu_I)}
\end{equation}
for every bounded nondegenerate interval \(I\subset J\) and every
Lipschitz function \(f\). Every ball of \(J\) is an interval \(I\)
with \(|I|\leq2r_I\), and the radial cutoffs in the proof of
Lemma~\ref{lem:radial-point} satisfy
\(|f_\varepsilon'|=g_\varepsilon\) almost everywhere. Hence the
arguments above apply to (\ref{eq:PI1d}), up to an absolute change
in the constants.

Since \(\underline d_\mu(x)=1\) at every Lebesgue point \(x\) of
\(w\) with \(0<w(x)<\infty\), part {\rm (b)} of
Lemma~\ref{lem:dim-point} shows that
\[
\sup_{t\geq2}\frac{U_X(t)}{\log t}=\infty
\]
implies that every such measure satisfying (\ref{eq:PI1d}) is
doubling on \(J\).

Conversely, suppose that this supremum is finite. By
Lemma~\ref{lem:LlogL-X}, \(L\log L(0,1)\hookrightarrow X\). For
\(J=\mathbb R\), Corollary~\ref{cor:Gaussian} provides a
non-doubling measure satisfying (\ref{eq:PI1d}). If \(J\) has a
finite left endpoint \(a\), take
\[
w(x)=\exp\left(-\frac1{x-a}\right), \qquad x\in\operatorname{int}J.
\]
This density is log-concave, and the one-dimensional Lipschitz
estimate in Lemma~\ref{lem:normed-logPI}, applied to each normalized
restriction \(\mu_I\), gives (\ref{eq:PI1d}). On the other hand,
\[
\mu((a,a+t))\sim t^2e^{-1/t}, \qquad t\downarrow0,
\]
and therefore
\[
\frac{\mu(B_J(a+r,2r))}{\mu(B_J(a+r,r))} =
\frac{\mu((a,a+3r))}{\mu((a,a+2r))} \longrightarrow\infty.
\]
Thus \(\mu\) is not doubling. The case of a finite right endpoint
follows by reflection. Consequently, for every fixed nondegenerate
interval \(J\), the condition above is also necessary for all
measures satisfying (\ref{eq:PI1d}) to be doubling.

For characterizations of weighted one-dimensional Orlicz--Poincar\'e
inequalities in terms of the weight (see
\cite{KorobenkoMilshsteinYong}); here we ask instead when their
uniform local validity forces the measure to be doubling.
\end{remark}

\subsection{Proof of Theorem~\ref{thm:SB}}
\label{sec:SB-proof}

\begin{proof}
Since \(U_X(t)/\log t\to\infty\), we have \(\sup_{t\geq2}U_X(t)/\log
t=\infty\). Hence part {\rm (b)} of Lemma~\ref{lem:dim-point} shows
that {\rm (i)} implies {\rm (ii)}.

Assume now {\rm (ii)}. Then \(\underline d_\mu(x)<\infty\) for
almost every \(x\in\Omega\), and Lemma~\ref{lem:order-SB} gives
\[
\underline{\operatorname{ord}}_{\rm loc}\mu(x)=0 \qquad\text{for
almost every }x\in\Omega.
\]
Thus {\rm (iii)} holds.

It remains to prove {\rm (iii)}\(\Rightarrow\){\rm (i)}. Let
\[
G= \left\{ x\in\Omega: \underline{\operatorname{ord}}_{\rm
loc}\mu(x)<\infty \right\}.
\]
By {\rm (iii)}, \(G\) has full measure and is therefore dense in
\(\Omega\), since every ball has positive measure. By
Lemma~\ref{lem:vertical},
\[
D_x(r)\leq e^{T_0}, \qquad x\in G,\quad r>0,
\]
where \(T_0\) depends only on \(C_P\) and \(X\). Therefore the
second part of Lemma~\ref{lem:propagation}, applied with
\(D_0=e^{T_0}\), shows that \(\mu\) is doubling. Since \(T_0\)
depends only on \(C_P\) and \(X\), so does the doubling constant.
This proves {\rm (iii)}\(\Rightarrow\){\rm (i)} and completes the
proof.
\end{proof}

\begin{remark}
In fact, the implication to doubling remains valid if
\(\underline{\operatorname{ord}}_{\rm loc}\mu(x)<\infty\) merely on
a dense subset of \(\Omega\), without any uniform bound.
\end{remark}

\section{Sharp logarithmic endpoints in infinite-dimensional spaces}
\label{sec:infinite}

We now turn to infinite-dimensional spaces. Under our standing
assumptions, a doubling measure with full support forces the
underlying metric space to have finite Hausdorff dimension (see
\cite{HeinonenLectures}). Since the models considered below have
infinite Hausdorff dimension, their measures cannot be doubling.
Nevertheless, the \(L\log L\)-Poincar\'e endpoint remains valid and
is sharp. More precisely, for the product, Gaussian Hilbert, and
Wiener models considered below, the \((X,L^1)\)-Poincar\'e
inequality holds if and only if $L\log L(0,1)\hookrightarrow X$.

\subsection{Testing criteria}
\label{sec:criteria}

The next lemma captures the mechanism behind the logarithmic
obstruction: the oscillation remains bounded below on sets of small
relative measure, while the integral of an upper gradient has the
corresponding logarithmic size.

\begin{lemma}
\label{lem:test} Assume that \((\Omega,d,\mu)\) supports an
\((X,L^1)\)-Poincar\'e inequality (\ref{eq:intro-PI}). Suppose that
there exist constants \(a,A>0\) and \(\Lambda\geq1\), and, for each
\(n\geq1\), a ball \(B_n\), a measurable set \(E_n\subset B_n\), a
Lipschitz function \(f_n:\Omega\to\mathbb R\), and an upper gradient
\(g_n\in L^1(\sigma B_n,\mu)\) of \(f_n\). Set
\[
p_n=\frac{\mu(E_n)}{\mu(B_n)}.
\]
Assume that
\begin{equation}
\label{eq:testp} 0<p_{n+1}<p_n\leq\frac12,\qquad
p_n\longrightarrow0,\qquad \log\frac1{p_{n+1}} \leq
\Lambda\log\frac1{p_n},
\end{equation}
and
\begin{equation}
\label{eq:testosc} |f_n-(f_n)_{B_n,\mu}| \geq a \qquad\text{on }E_n,
\end{equation}
while
\begin{equation*}
 r_{B_n}\frac1{\mu(\sigma B_n)} \int_{\sigma
B_n}g_n\,d\mu \leq A p_n\log\frac1{p_n}.
\end{equation*}
Then
\begin{equation}
\label{eq:testU} U_X(t)\preceq\log t, \qquad t\geq2.
\end{equation}
In particular,
\[
\sup_{t\geq2}\frac{U_X(t)}{\log t}<\infty.
\]
\end{lemma}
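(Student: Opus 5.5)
The plan is to test the Poincar\'e inequality (\ref{eq:intro-PI}) on the data \((B_n,f_n,g_n)\). This bounds \(U_X(1/p_n)\) along the sequence \(p_n\). The growth condition in (\ref{eq:testp}) will then let us interpolate to all \(t\geq2\), using only the monotonicity of \(U_X\).

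First, by the lattice property of \(X\) and (\ref{eq:testosc}),
\[
\big\|(f_n-(f_n)_{B_n,\mu})^*_{\mu_{B_n}}\big\|_X \geq a\|\chi_{E_n}\|_{X(B_n,\mu_{B_n})}=a\,\phi_X(p_n).
\]
Combining this with (\ref{eq:intro-PI}) and the hypothesis on \(g_n\) gives
\[
a\,\phi_X(p_n)\leq C_PAp_n\log\frac1{p_n}.
\]
Multiplying by \(1/p_n\) yields
\[
U_X(1/p_n)\leq \frac{C_PA}{a}\log\frac1{p_n}, \qquad n\geq1.
\]
The fact that \(f_n\) is Lipschitz is used only to make sure that \(f_n\) is locally integrable, so that it is an admissible test function.

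Next, we interpolate. Fix \(t\geq 1/p_1\). Since \(p_n\downarrow0\), there is \(n\) with \(1/p_n\leq t<1/p_{n+1}\). Because \(U_X\) is nondecreasing,
\[
U_X(t)\leq U_X(1/p_{n+1})\preceq \log\frac1{p_{n+1}}\leq \Lambda\log\frac1{p_n}\leq\Lambda\log t .
\]
For \(2\leq t\leq 1/p_1\) we have \(U_X(t)\leq U_X(1/p_1)\preceq\log(1/p_1)\). Since \(p_1\leq1/2\) and \(t\geq2\) give \(\log t\geq\log2\), this is at most a constant multiple of \(\log t\), with the constant depending on \(p_1\). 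Combining the two ranges proves (\ref{eq:testU}), and the supremum statement follows at once.

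The main point needing care is the interpolation step. Without the ratio bound \(\log(1/p_{n+1})\leq\Lambda\log(1/p_n)\), the sequence \(p_n\) could have huge gaps, and monotonicity of \(U_X\) alone would then not control \(U_X\) between consecutive values \(1/p_n\). With that bound the gaps are controlled and the argument goes through. The remaining checks are routine: the normalized-norm identity \(\|\chi_E\|_{X(B,\mu_B)}=\phi_X(\mu(E)/\mu(B))\) for the first step, and the admissibility of \(g_n\) as an upper gradient, which the hypothesis \(g_n\in L^1(\sigma B_n,\mu)\) supplies.
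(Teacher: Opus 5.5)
Your proposal is correct and follows the paper's proof essentially step for step. You test (\ref{eq:intro-PI}) on $(B_n,f_n,g_n)$ via the lattice property to get $U_X(1/p_n)\le (C_PA/a)\log(1/p_n)$, interpolate between consecutive $t_n=1/p_n$ using monotonicity of $U_X$ and $\log t_{n+1}\le\Lambda\log t_n$, and absorb the range $2\le t<t_1$ into the implicit constant.
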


\begin{proof}
The lattice property, (\ref{eq:testosc}), and (\ref{eq:intro-PI})
give
\[
a\phi_X(p_n) \leq \|f_n-(f_n)_{B_n,\mu}\|_{X(B_n,\mu_{B_n})} \leq
C_PA p_n\log\frac1{p_n}.
\]
Consequently,
\[
U_X(1/p_n) = \frac{\phi_X(p_n)}{p_n} \leq
\frac{C_PA}{a}\log\frac1{p_n}.
\]

Set \(t_n=1/p_n\). By (\ref{eq:testp}), \(t_n\uparrow\infty\) and
\[
\log t_{n+1}\leq\Lambda\log t_n.
\]
If \(t_n\leq t<t_{n+1}\), the monotonicity of \(U_X\) gives
\[
U_X(t) \leq U_X(t_{n+1}) \leq \frac{C_PA}{a}\log t_{n+1} \leq
\frac{C_PA\Lambda}{a}\log t.
\]
This proves (\ref{eq:testU}) for \(t\geq t_1\). For \(2\leq t<t_1\),
the same estimate follows, after changing the implicit constant,
from the monotonicity of \(U_X\).
\end{proof}

\begin{remark}
\label{rem:PIB-test} Lemma~\ref{lem:test} remains valid under the
testing \((X,L^1)\)-Poincar\'e inequality (\ref{eq:PI-B}), provided
that the functions \(f_n\) are bounded and that the gradient
condition is replaced by
\[
r_{B_n}\frac1{\mu(B_n)} \int_{\sigma B_n}g_n\,d\mu \leq A
p_n\log\frac1{p_n}.
\]
Indeed, the proof uses the Poincar\'e inequality only for the
functions \(f_n\).
\end{remark}

The second criterion uses a single radial cutoff. It applies when
the small-ball function has power-type behavior and sufficient
regularity.

\begin{lemma}[Small-ball criterion]
\label{lem:smallball} Assume that \((\Omega,d,\mu)\) supports the
testing \((X,L^1)\)-Poincar\'e inequality (\ref{eq:PI-B}). Suppose
that, for some \(x\in\Omega\), the function
\[
\psi(r)=-\log\mu(B(x,r))
\]
is convex for all sufficiently small \(r>0\), and that
\[
\psi(r)\simeq r^{-\beta} \qquad\text{as }r\downarrow0
\]
for some \(\beta>0\). Then
\[
U_X(t)\preceq\log t, \qquad t\geq2.
\]
\end{lemma}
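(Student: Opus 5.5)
The plan is to use the pointwise radial inequality of Lemma~\ref{lem:radial-point} at the fixed point \(x\), with carefully chosen pairs \(\rho<R'\) at small scales. Write \(V=V_x\), so that \(\psi=-\log V\). Convexity of \(\psi\) on some interval \((0,r_0)\) gives a one-sided derivative at every point. Moreover \(V\) is differentiable at all but countably many points there, and at such points \(V'(\rho)/V(\rho)=-\psi'(\rho)=|\psi'(\rho)|\).

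First I record the consequences of convexity combined with \(\psi(r)\simeq r^{-\beta}\), say \(c_1r^{-\beta}\le\psi(r)\le c_2r^{-\beta}\). For \(0<\rho<r_0/2\), convexity and monotonicity give
\[
|\psi'(\rho)|\le\frac{\psi(\rho/2)-\psi(\rho)}{\rho/2}\le\frac{2\psi(\rho/2)}{\rho}\preceq\rho^{-1-\beta}.
\]
Next fix a large constant \(\kappa\ge2\), chosen so that \(c_1\kappa^{\beta}\ge 2c_2\), and take \(R'=\kappa\rho\), assuming \(\kappa\rho<r_0\). Then
\[
\log\frac{V(R')}{V(\rho)}=\psi(\rho)-\psi(\kappa\rho)\ge c_1\rho^{-\beta}-c_2\kappa^{-\beta}\rho^{-\beta}\ge\tfrac{c_1}{2}\rho^{-\beta}.
\]
This forces \(V(R')\ge4V(\rho)\) for small \(\rho\). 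We also have the upper bound \(\log\frac{V(R')}{V(\rho)}\le\psi(\rho)\le c_2\rho^{-\beta}\). Hence \(Q(\rho):=V(\kappa\rho)/V(\rho)\) satisfies \(\log Q(\rho)\simeq\rho^{-\beta}\).

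Now apply (\ref{eq:radial-point}) at a differentiability point \(\rho\):
\[
U_X(Q(\rho))\le\tfrac32C_P\,\kappa\rho\,|\psi'(\rho)|\preceq\rho^{-\beta}\simeq\log Q(\rho).
\]
This gives \(U_X(t)\preceq\log t\) along the values \(t=Q(\rho)\), for \(\rho\) outside a countable set in \((0,\rho_0)\).

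To pass to all \(t\ge2\), I use the monotonicity of \(U_X\) together with the fact that the values \(Q(\rho)\) are spread out enough. Take \(\rho_k=2^{-k}\rho_1\), perturbed slightly to avoid the countable exceptional set. Since \(\log Q(\rho_k)\simeq 2^{k\beta}\), we get \(\log Q(\rho_k)\to\infty\). Also \(\log Q(\rho_{k+1})\le\Lambda\log Q(\rho_k)\) for a fixed \(\Lambda\) depending on \(\beta,c_1,c_2\). The perturbation is harmless because the two-sided bounds on \(\log Q\) are stable under changing \(\rho\) by a factor close to \(1\).

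The sequence need not be monotone, so I argue as follows. For \(t\) large, pick \(k\) minimal with \(\log Q(\rho_k)\ge\log t\). Then \(\log Q(\rho_k)\preceq 2^{\beta}\cdot 2^{(k-1)\beta}\preceq\log t\), using minimality and the two-sided bounds. Monotonicity of \(U_X\) then gives
\[
U_X(t)\le U_X(Q(\rho_k))\preceq\log Q(\rho_k)\preceq\log t.
\]
For bounded \(t\) the bound is trivial, since \(U_X(t)\le t\). This is the same bookkeeping as in the proof of Lemma~\ref{lem:test}.

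The main obstacle is the derivative estimate: convexity is exactly what converts the size information \(\psi\simeq r^{-\beta}\) into the bound \(\rho|\psi'(\rho)|\preceq\psi(\rho/2)\). Without convexity, \(\psi'\) could spike, and the averaging trick of Lemma~\ref{lem:radial} would only give control at some unspecified point. A second subtlety is the choice of \(\kappa\). It must be large enough that \(\log Q\) is bounded below by \(\rho^{-\beta}\) even though only a two-sided comparability with unequal constants is available.
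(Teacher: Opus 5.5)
Your argument is correct and is essentially the paper's proof: the same choice of dilation factor ($c_1\kappa^{\beta}\ge 2c_2$, equivalently $\frac{c_2}{c_1}\lambda^{-\beta}\le\frac12$) giving $\log Q\simeq\rho^{-\beta}$, a radial cutoff tested on $B(x,\kappa\rho)$, and convexity used to compare the slope of $\psi$ at $\rho$ with its secant on $[\rho/2,\rho]$. The differences are only technical: you invoke Lemma~\ref{lem:radial-point} at differentiability points of $\psi$, whereas the paper uses a cutoff of finite width $\delta_r=r/\psi(r)$ and shows $\mu(B(x,r+\delta_r))\le e^{C_0}\mu(B(x,r))$; and you reach all large $t$ through a dyadic sequence plus monotonicity of $U_X$, whereas the paper uses continuity and the intermediate value theorem for $r\mapsto\log(1/p(r))$.
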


\begin{proof}
Choose \(c_1,c_2>0\) and \(\lambda>1\) such that, for all
sufficiently small \(r\),
\[
c_1r^{-\beta}\leq\psi(r)\leq c_2r^{-\beta}, \qquad
\frac{c_2}{c_1}\lambda^{-\beta}\leq\frac12.
\]
Then
\[
\psi(\lambda r) \leq c_2\lambda^{-\beta}r^{-\beta} \leq\frac12
c_1r^{-\beta} \leq\frac12\psi(r).
\]
Set
\[
p(r)=\frac{\mu(B(x,r))}{\mu(B(x,\lambda r))}.
\]
Since
\[
\log\frac1{p(r)} = \psi(r)-\psi(\lambda r),
\]
we obtain
\[
\frac12\psi(r) \leq \log\frac1{p(r)} \leq \psi(r).
\]

Let us write
\[
\delta_r=\frac{r}{\psi(r)}.
\]
Since \(\psi(r)\to\infty\), we have \(\delta_r=o(r)\), and hence
\(r+\delta_r<\lambda r\) for sufficiently small \(r\). By the
convexity of \(\psi\), comparing the secant slopes on \([r/2,r]\)
and \([r,r+\delta_r]\), and using \(\delta_r/r=1/\psi(r)\), we
obtain
\[
\begin{aligned}
0
&\leq \psi(r)-\psi(r+\delta_r) \\
&\leq
\frac{2\delta_r}{r}\bigl(\psi(r/2)-\psi(r)\bigr) \\
&\leq \frac{2}{\psi(r)}\,\psi(r/2) \leq 2^{\beta+1}\frac{c_2}{c_1}
=:C_0.
\end{aligned}
\]
Therefore
\[
\mu(B(x,r+\delta_r)) \leq e^{C_0}\mu(B(x,r)).
\]

Let \(B=B(x,\lambda r)\), and consider the radial cutoff
\[
f(y)=\left(1-\frac{(d(x,y)-r)_+}{\delta_r}\right)_+.
\]
Then \(f=1\) on \(B(x,r)\), \(f=0\) outside \(B(x,r+\delta_r)\), and
the standard radial cutoff estimate shows that
\[
g=\delta_r^{-1}\chi_{B(x,r+\delta_r)}
\]
is an upper gradient of \(f\). Moreover,
\[
f_{B,\mu} \leq \frac{\mu(B(x,r+\delta_r))}{\mu(B(x,\lambda r))} \leq
e^{C_0}p(r).
\]
Since
\[
\log\frac1{p(r)} \geq \frac12\psi(r) \longrightarrow\infty
\qquad\text{as }r\downarrow0,
\]
we have \(e^{C_0}p(r)\leq1/2\) for sufficiently small \(r\).
Therefore
\[
|f-f_{B,\mu}|\geq\frac12 \qquad\text{on }B(x,r).
\]

Applying the testing \((X,L^1)\)-Poincar\'e inequality gives
\[
\frac12\phi_X(p(r)) \leq C_P\lambda r\, \frac{\mu(B(x,r+\delta_r))}
{\delta_r\,\mu(B(x,\lambda r))} \leq C_P\lambda e^{C_0}\psi(r)p(r).
\]
Hence
\[
U_X(1/p(r)) = \frac{\phi_X(p(r))}{p(r)} \leq 2C_P\lambda
e^{C_0}\psi(r) \leq 4C_P\lambda e^{C_0}\log\frac1{p(r)}.
\]

Finally, by assumption, \(\psi\) is finite and convex for all
sufficiently small \(r\), and hence it is continuous there.
Therefore
\[
r\longmapsto \log\frac1{p(r)} = \psi(r)-\psi(\lambda r)
\]
is continuous. Moreover,
\[
\log\frac1{p(r)} \simeq \psi(r) \longrightarrow\infty \qquad\text{as
}r\downarrow0.
\]
Thus it assumes every sufficiently large value. Consequently,
\[
U_X(t)\preceq\log t
\]
for all sufficiently large \(t\), and the remaining range \(t\geq2\)
is absorbed by changing the implicit constant.
\end{proof}

\subsection{Infinite product models}
\label{sec:products}

We consider infinite products of one-dimensional log-concave
probabilities equipped with weighted supremum metrics. Isoperimetric
inequalities for infinite products with supremum metrics have been
studied in \cite{BobkovHoudre,BartheSup,BartheHuou} and the
references therein. We recall the elementary properties of the
product models needed below, and then establish a uniform local
\(L\log L\)-Poincar\'e inequality on their balls. We also show that
any r.i.\ space \(X\) for which the corresponding
\((X,L^1)\)-Poincar\'e inequality holds must satisfy
\(U_X(t)\preceq\log t\).

Let \(\nu\) be a nondegenerate log-concave probability measure on
\(\mathbb R\), set \(S=\operatorname{supp}\nu\), and fix
\(b\in\operatorname{int}S\). Let \((a_j)_{j\geq1}\) be a
nonincreasing sequence of positive numbers with \(a_j\to0\), and
assume that
\begin{equation}
\label{eq:producttail} \sum_{j=1}^{\infty}
\nu\left\{|t-b|>\frac{\varepsilon}{a_j}\right\}<\infty
\qquad\text{for every }\varepsilon>0.
\end{equation}
This condition is automatic when \(S\) is compact. If \(S\) is
unbounded, a nondegenerate one-dimensional log-concave probability
has exponential tails \cite[Lemma~3.1]{Bobkov}. Hence
(\ref{eq:producttail}) holds whenever \(a_j\log j\to0\), since for
every \(\varepsilon>0\),
\[
\nu\left\{|t-b|>\frac{\varepsilon}{a_j}\right\} \leq
C\exp\left(-\frac{c\varepsilon}{a_j}\right)
\]
for all sufficiently large \(j\).%
\footnote{This includes, for example, geometrically decreasing
weights \(a_{j+1}\leq qa_j\), \(0<q<1\).}

Define
\[
\Omega_{a,b} = \{x\in S^{\mathbb N}:a_j|x_j-b|\to0\},
\]
equip it with the metric
\begin{equation*}
 d_a(x,y)=\sup_{j\geq1}a_j|x_j-y_j|,
\end{equation*}
and let
\[
\mu=\nu^{\otimes\mathbb N}
\]
denote the infinite product measure.

We first verify the standing assumptions. Applying Borel--Cantelli
to (\ref{eq:producttail}) with \(\varepsilon=1/k\), \(k\geq1\),
gives
\[
\mu(\Omega_{a,b})=1.
\]
Moreover, recalling that \(S=\operatorname{supp}\nu\), the map
\[
x\longmapsto (a_j(x_j-b))_{j\geq1}
\]
identifies \(\Omega_{a,b}\) isometrically with the closed subset
\[
c_0\cap\prod_{j\geq1}a_j(S-b)
\]
of \(c_0\). Hence \(\Omega_{a,b}\) is complete and separable, and
its Borel \(\sigma\)-algebra is generated by the coordinate maps.

Every ball has positive measure. Indeed, let \(x\in\Omega_{a,b}\)
and \(r>0\). Since \(a_j|x_j-b|\to0\), we have
\[
a_j|x_j-b|<\frac r2
\]
for all sufficiently large \(j\). For such \(j\),
\[
\left\{|t-b|<\frac{r}{2a_j}\right\} \subset \left\{|t-x_j|<\frac
r{a_j}\right\}.
\]
By (\ref{eq:producttail}), the complements of the sets on the left
have summable \(\nu\)-measure, and hence the corresponding infinite
product of probabilities is positive after discarding finitely many
indices. The remaining finitely many coordinate intervals also have
positive \(\nu\)-measure because \(x_j\in S\). Since
\(a_j|x_j-y_j|\to0\) for \(x,y\in\Omega_{a,b}\), the conditions
\[
a_j|x_j-y_j|<r \qquad\text{for every }j
\]
are equivalent to \(d_a(x,y)<r\). Therefore
\[
\mu(B(x,r))>0.
\]
Since \(\mu\) is a probability and \(\nu\) is nonatomic, all the
standing assumptions are satisfied.

We next observe that every Borel set of full \(\mu\)-measure has
infinite Hausdorff dimension. Let \(F\subset\Omega_{a,b}\) be Borel
with \(\mu(F)=1\), and, for \(m\geq1\), let
\[
\pi_m(x)=(x_1,\ldots,x_m).
\]
Since \(\Omega_{a,b}\) is a Polish space, the Borel probability
measure \(\mu\) is Radon. Hence, by inner regularity, there exists a
compact set \(K\subset F\) with \(\mu(K)>0\). Since \(\pi_m(K)\) is
compact,
\[
\nu^{\otimes m}(\pi_m(K)) = \mu\bigl(\pi_m^{-1}(\pi_m(K))\bigr) \geq
\mu(K)>0.
\]
As \(\nu^{\otimes m}\) is absolutely continuous with respect to
Lebesgue measure, \(\pi_m(K)\) has positive Lebesgue measure in
\(\mathbb R^m\). Therefore
\[
\dim_H\pi_m(K)=m.
\]
Moreover,
\[
|\pi_m(x)-\pi_m(y)| \leq \left(\sum_{j=1}^m
a_j^{-2}\right)^{1/2}d_a(x,y),
\]
so \(\pi_m\) is Lipschitz. Hence
\[
\dim_HF \geq \dim_HK \geq \dim_H\pi_m(K) = m.
\]
Since this holds for every \(m\geq1\), we obtain
\[
\dim_HF=\infty.
\]
As \(F\) was arbitrary, it follows that
\[
\dim_H^+\mu=\infty.
\]

To prove the endpoint inequality, we use the following normed
version of Proposition~\ref{prop:logPI}. Its proof follows the same
arguments, starting from Bobkov's dimension-free isoperimetric
inequality \cite[Proposition~2.1]{Bobkov}.

\begin{lemma}[Normed logarithmic endpoint]
\label{lem:normed-logPI} Let \(K\subset\mathbb R^m\) be a bounded
symmetric convex body with nonempty interior. Write \(\|\cdot\|_K\)
for its Minkowski norm and \(\|\cdot\|_{K,*}\) for the dual norm. If
\(\rho\) is an absolutely continuous log-concave probability measure
concentrated in \(x_0+rK\), then
\begin{equation}
\label{eq:normed-logPI} \|u-u_\rho\|_{L^{\Phi_0}(\rho)} \preceq
r\int g\,d\rho, \qquad u_\rho=\int u\,d\rho,
\end{equation}
for every \(u\in L^1(\rho)\) and every integrable upper gradient
\(g\) with respect to \(\|\cdot\|_K\). It is enough that \(u\) and
\(g\) be defined on a convex Borel set \(A\) of full
\(\rho\)-measure and satisfy the upper-gradient inequality along
curves contained in \(A\). The implicit constant is absolute and
independent of \(m\) and \(K\).
\end{lemma}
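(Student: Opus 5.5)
The plan is to repeat the proof of Proposition~\ref{prop:logPI}, replacing the Euclidean isoperimetric input with Bobkov's normed version \cite[Proposition~2.1]{Bobkov}. That result gives, for a log-concave probability \(\rho\) concentrated in a set of \(\|\cdot\|_K\)-diameter \(2r\), an isoperimetric inequality measured with respect to \(\|\cdot\|_K\). In that inequality the boundary measure is computed from enlargements in the \(\|\cdot\|_K\)-metric, and the isoperimetric profile is bounded below by \(\frac{1}{2r}\bigl(s\log\frac1s+(1-s)\log\frac1{1-s}\bigr)\). Since \(x_0+rK\) has \(\|\cdot\|_K\)-diameter at most \(2r\), we obtain \(I_\rho(s)\succeq r^{-1}s\log(e/s)\) for \(0<s\le 1/2\), with an absolute constant.

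\textbf{Lipschitz case.} Take \(u\) locally Lipschitz. We use the coarea/isoperimetric–Sobolev inequality of \cite{BobkovHoudre} in the normed setting. There the modulus of the gradient is the local Lipschitz constant with respect to \(\|\cdot\|_K\), which a.e. equals \(\|\nabla u\|_{K,*}\). Apply it to \(h_\pm=(u-m_u)_\pm\), where \(m_u\) is a median, so that \(\lambda_{h_\pm}(t)\le 1/2\) for \(t>0\). This yields
\[
r\int\|\nabla h\|_{K,*}\,d\rho\succeq\int_0^\infty\phi_{L^{\Phi_0}}(\lambda_h(t))\,dt\ge\|h\|_{L^{\Phi_0}(\rho)},
\]
using (\ref{eq:Phi0inverse}) and the layer-cake/triangle inequality. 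Summing the estimates for \(h_+\) and \(h_-\), and then replacing the median by the mean at the cost of a factor \(2\), gives (\ref{eq:normed-logPI}) with \(g=\|\nabla u\|_{K,*}\).

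\textbf{General case.} Let \(D\) be the interior, relative to the affine hull, of \(\operatorname{supp}\rho\cap A\); after reducing dimension we may assume \(\rho\) has a positive continuous density on the open convex set \(D\). On segments in \(D\), the upper-gradient inequality with respect to \(\|\cdot\|_K\) gives \(|\frac{d}{dt}u(x+tv)|\le \|v\|_K\,g(x+tv)\). As in the proof of Proposition~\ref{prop:logPI}, the ACL characterization then gives \(u\in W^{1,1}_{\rm loc}(D)\) and \(|\nabla u\cdot v|\le g\) a.e. for every \(v\) with \(\|v\|_K=1\), first along a countable dense set of directions and then for all of them by continuity. Hence \(\|\nabla u\|_{K,*}\le g\) a.e. Next, exhaust \(D\) by the compact convex sets \(K_j=x_*+t_j(\overline D-x_*)\) and pass to the renormalized restrictions \(\rho_j\), which are log-concave and still concentrated in \(x_0+rK\). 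Approximate \(u\) by mollifications in \(W^{1,1}(K_j)\), apply the Lipschitz case, and pass to the limit using Fatou; then let \(j\to\infty\) exactly as before.

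\textbf{Main obstacle.} The main point requiring care is the normed coarea/isoperimetric step: the boundary measure in Bobkov's inequality must be matched with \(\|\nabla u\|_{K,*}\) and not with the Euclidean gradient. This is ensured because Bobkov's statement uses \(\|\cdot\|_K\)-enlargements, and the Bobkov–Houdré equivalence holds for an arbitrary metric with \(|\nabla u|\) interpreted as the metric local Lipschitz constant. Independence of \(m\) and \(K\) comes from the absolute constant in Bobkov's bound.
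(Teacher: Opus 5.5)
Your proposal is correct and follows essentially the same route as the paper: Bobkov's normed dimension-free inequality (the paper first takes the entropy form from \cite[Proposition~2.1]{Bobkov} applied to \(rK\), then passes to the isoperimetric profile by approximating indicators as in \cite[Corollary~2.2]{Bobkov}), the median/coarea/layer-cake argument with \(\|\nabla u\|_{K,*}\) for Lipschitz functions, and then the ACL step, countable dense directions, exhaustion by compact convex sets, mollification and Fatou. Your choice \(D=\operatorname{int}(\operatorname{supp}\rho\cap A)\) is a harmless variant of the paper's \(D=\operatorname{int}(\operatorname{supp}\rho)\subset A\); both have full \(\rho\)-measure because boundaries of convex sets are Lebesgue-null, and the dimension reduction is unnecessary since \(\rho\) is absolutely continuous on \(\mathbb R^m\).
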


\begin{proof}
Applying \cite[Proposition~2.1]{Bobkov} with the symmetric convex
body \(rK\), and using \(\|\xi\|_{rK,*}=r\|\xi\|_{K,*}\), gives
\[
2r\int\|\nabla h\|_{K,*}\,d\rho \geq \operatorname{Ent}_\rho(h)
+\operatorname{Ent}_\rho(1-h)
\]
for every locally Lipschitz function \(h\) with values in \([0,1]\),
where
\[
\operatorname{Ent}_\rho(h) =\int h\log h\,d\rho -\left(\int
h\,d\rho\right) \log\left(\int h\,d\rho\right).
\]

Approximating characteristic functions by Lipschitz functions with
respect to \(\|\cdot\|_K\), as in \cite[Corollary~2.2]{Bobkov},
yields
\[
2r\,I_\rho(s)\geq s\log\frac1s+(1-s)\log\frac1{1-s}, \qquad
0<s\leq\frac12,
\]
where \(I_\rho\) is the isoperimetric profile associated with the
metric induced by \(\|\cdot\|_K\).

We now follow the argument of Proposition~\ref{prop:logPI}. First,
the coarea and layer-cake argument, with \(|\nabla u|\) replaced by
\(\|\nabla u\|_{K,*}\), gives
\[
\|u-u_\rho\|_{L^{\Phi_0}(\rho)} \preceq r\int\|\nabla
u\|_{K,*}\,d\rho
\]
for Lipschitz functions \(u\).

Second, let \(u\in L^1(\rho)\) have an integrable upper gradient
\(g\) on a convex Borel set \(A\) of full \(\rho\)-measure. Put
\(D=\operatorname{int}(\operatorname{supp}\rho)\). The density of
\(\rho\) is positive on \(D\), so \(A\cap D\) has full Lebesgue
measure in \(D\). Since \(A\) is convex, this implies \(D\subset
A\).

The line-segment argument gives \(u\in W^{1,1}_{\rm loc}(D)\).
Repeating it for a countable dense subset of \(\{v:\|v\|_K=1\}\), as
in Proposition~\ref{prop:logPI}, yields
\[
\|\nabla u\|_{K,*}\leq g \qquad\text{a.e. in }D.
\]
Choose an increasing sequence of compact convex sets \(C_j\Subset
D\) whose union is \(D\), with \(\rho(C_j)>0\), and set
\[
\rho_j=\frac{\rho|_{C_j}}{\rho(C_j)}.
\]
Each \(\rho_j\) is log-concave and remains concentrated in
\(x_0+rK\). The density of \(\rho\) is continuous and positive on
\(D\), and hence is bounded above and bounded away from zero on a
neighborhood of each \(C_j\).

Fix \(j\). By local mollification, there are smooth functions
\(u_\ell\) on a neighborhood of \(C_j\) such that
\[
u_\ell\to u,\qquad \nabla u_\ell\to\nabla u \quad\text{in
}L^1(C_j,\rho_j).
\]
Applying the Lipschitz estimate to \(\rho_j\), then passing to an
almost everywhere convergent subsequence and using the Fatou
property, gives
\[
\|u-u_{\rho_j}\|_{L^{\Phi_0}(\rho_j)} \leq Cr\int_{C_j}\|\nabla
u\|_{K,*}\,d\rho_j \leq Cr\int_{C_j}g\,d\rho_j,
\]
where \(C\) is the same absolute constant.

Finally, \(\rho(C_j)\to1\), \(u_{\rho_j}\to u_\rho\), and
\(\int_{C_j}g\,d\rho_j\to\int g\,d\rho\). Applying Fatou's lemma to
the Luxemburg modular, as in Proposition~\ref{prop:logPI}, proves
(\ref{eq:normed-logPI}).
\end{proof}

In the next theorem, we establish the sharp logarithmic endpoint for
these infinite product models. Since every ball has positive measure
and \(\dim_H^+\mu=\infty\), the measure \(\mu\) is necessarily
non-doubling.

\begin{theorem}
\label{thm:product} Let \((\Omega_{a,b},d_a,\mu)\) be the infinite
product model defined above. Then
\begin{equation}
\label{eq:productendpoint} \|f-f_{B,\mu}\|_{L^{\Phi_0}(B,\mu_B)}
\preceq r_B\int_B g\,d\mu_B
\end{equation}
for every ball \(B\), every \(f\in L^1(B,\mu_B)\), and every
integrable upper gradient \(g\) of \(f\) along rectifiable curves
contained in \(B\). The implicit constant is absolute.

Moreover, if \((\Omega_{a,b},d_a,\mu)\) supports the
\((X,L^1)\)-Poincar\'e inequality (\ref{eq:intro-PI}), then
\begin{equation}
\label{eq:productlog} U_X(t)\preceq\log t,\qquad t\geq2.
\end{equation}
Consequently, \((\Omega_{a,b},d_a,\mu)\) supports
(\ref{eq:intro-PI}) if and only if
\[
L\log L(0,1)\hookrightarrow X.
\]
\end{theorem}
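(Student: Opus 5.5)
The proof splits into three parts: the endpoint inequality (\ref{eq:productendpoint}) by finite-dimensional reduction, the necessity (\ref{eq:productlog}) by testing, and the final equivalence.

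\emph{Endpoint inequality.} Fix a ball \(B=B(x_0,r)\). In the coordinates \(y_j=a_j(x_j-b)\), \(B\) is the product of the intervals \(I_j=\{t:a_j|t-x_{0,j}|<r\}\) intersected with \(\Omega_{a,b}\), so \(\mu_B\) is the product of the normalized restrictions \(\nu_j=\nu|_{I_j}/\nu(I_j)\). Each \(\nu_j\) is a one-dimensional log-concave probability. For \(m\geq1\), the finite-dimensional marginal \(\rho_m=\nu_1\otimes\cdots\otimes\nu_m\) is log-concave on \(\mathbb R^m\). It is concentrated in \(\pi_m(x_0)+rK_m\), where \(K_m=\prod_{j\leq m}[-a_j^{-1},a_j^{-1}]\) is a symmetric box whose Minkowski norm is \(\max_{j\leq m}a_j|t_j|\).

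Given \(f\) and \(g\), I will condition on the tail: for a.e.\ tail \(z=(x_j)_{j>m}\), consider \(u_z(y)=f(y,z)\) and \(g_z(y)=g(y,z)\) on the convex slice \(A_z=\{y:(y,z)\in B\}\). Segments in \(A_z\) are rectifiable curves in \(B\), with \(d_a\)-length equal to the \(\|\cdot\|_{K_m}\)-length. Hence \(g_z\) is an upper gradient of \(u_z\) along curves in \(A_z\), and Lemma~\ref{lem:normed-logPI} gives
\[
\|u_z-(u_z)_{\rho_m}\|_{L^{\Phi_0}(\rho_m)}\preceq r\int g_z\,d\rho_m
\]
with an absolute constant.

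\emph{Main obstacle.} Passing from these slice estimates to the full oscillation \(f-f_{B,\mu}\) is the hard step, because the slice inequality only controls oscillation in the first \(m\) coordinates. My plan is to apply the slice estimate to the conditional expectation \(E_mf=\int f(\cdot,z)\,d\mu_B^{(>m)}(z)\) instead of to \(f\) itself. \(E_mf\) depends only on the first \(m\) coordinates, and Minkowski's inequality shows that \(E_mg\) is an upper gradient of \(E_mf\) along segments. Lemma~\ref{lem:normed-logPI} then gives
\[
\|E_mf-f_{B,\mu}\|_{L^{\Phi_0}(\mu_B)}\preceq r\int_B g\,d\mu_B .
\]
By the martingale convergence theorem, \(E_mf\to f\) in \(L^1(\mu_B)\) and a.e., since \(f\in L^1\) and the Borel \(\sigma\)-algebra is generated by the coordinates. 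The Fatou property of the Luxemburg norm then yields (\ref{eq:productendpoint}). The points to check are measurability and the upper-gradient property of \(E_mg\) on the whole slice; I expect these to follow as in the last step of Proposition~\ref{prop:logPI}.

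\emph{Necessity.} I will use Lemma~\ref{lem:test} with product sets. Take \(B_n=B(x_0,r)\) with the radius fixed, or with radii shrinking if convenient, and let \(E_n\) be a cylinder such as \(\{x\in B:x_1<c_n\}\), chosen so that \(p_n\downarrow0\) geometrically, for instance \(p_{n+1}=p_n/2\), which satisfies (\ref{eq:testp}) with \(\Lambda=2\). For \(f_n\), take a one-coordinate cutoff \(f_n(x)=\eta(x_1)\), which equals \(1\) on \(E_n\) and \(0\) outside a slightly larger cylinder with measure ratio \(\leq 2p_n\). Its upper gradient is \(|\eta'(x_1)|a_1^{-1}\) with respect to \(d_a\), since \(a_1|x_1-y_1|\leq d_a(x,y)\).

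The quantitative heart is then one-dimensional: for the log-concave measure \(\nu_1\) near the edge of its interval, I need a level set of measure \(p\) whose transition layer costs \(\preceq p\log(1/p)\) in \(r\int g\,d\mu_{\sigma B}\). A sharp log-linear cutoff on the \(\nu_1\)-quantile scale gives exactly this. Since the quantile function \(F^{-1}\) of a log-concave law satisfies \(I(s)\simeq s\log(1/s)\) only for exponential-type tails, I will instead put the ball center far out where \(\nu\) has exponential decay. If \(S\) is compact, I will instead use a Gaussian-like interior construction, working in a low-probability region obtained by concentrating on many coordinates, or combine the product structure over the first \(m\) coordinates to produce \(\log(1/p)\) cost. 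The worry is that the construction for compact \(S\) may require products over \(k\simeq\log(1/p)\) coordinates, with \(E_n\) a product of intervals of relative mass \(1/2\) each and the cutoff \(\max_j\) type, whose gradient layer has measure \(\preceq kp\). Since \(\sigma B\) versus \(B\) only changes measures by bounded factors for such cylinders, this yields (\ref{eq:productlog}).

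\emph{Equivalence.} Finally, (\ref{eq:productendpoint}) together with Lemma~\ref{lem:LlogL-X} gives sufficiency, using \(\mu(\sigma B)\geq\mu(B)\) after enlarging the gradient integral to \(\sigma B\) and normalizing (taking \(\sigma=1\)). Necessity follows from (\ref{eq:productlog}) and Lemma~\ref{lem:LlogL-X}.
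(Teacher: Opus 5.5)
The endpoint inequality (\ref{eq:productendpoint}) and the final equivalence follow essentially the paper's argument: slice estimate from Lemma~\ref{lem:normed-logPI}, conditional expectation on the first $m$ coordinates, martingale convergence, and Fatou. The paper avoids defining $E_mf$ pointwise and checking that $E_mg$ is an upper gradient. It applies the lemma to each slice $f_z$ and then uses Minkowski's integral inequality in $L^{\Phi_0}(\rho_m)$ for $E_mf-f_{B,\mu}=\int\bigl(f_z-(f_z)_{\rho_m}\bigr)\,d\mu_B^{(>m)}(z)$.

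\textbf{The gap is in the necessity estimate (\ref{eq:productlog}).} Your one-coordinate cutoff $\eta(x_1)$ cannot work in the basic case of the theorem. Put $R_1=r/a_1$ and $\sigma I_1=S\cap(x_{0,1}-\sigma R_1,x_{0,1}+\sigma R_1)$. The cost is then $r\,\mu(\sigma B)^{-1}\int_{\sigma B}g\,d\mu=R_1\int|\eta'|\,d\nu_{\sigma I_1}$, where $\nu_{\sigma I_1}$ is the normalized restriction. Since $\eta$ must vary by at least $a$ inside $I_1$, for uniform $\nu$ on $[0,1]$ (the Hilbert cube) this is at least $aR_1/|\sigma I_1|\geq a/(2\sigma)$, whatever level set you choose. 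Meanwhile $p_n\log(1/p_n)\to0$. Moving the center into an exponential-type region can be made to work when $S$ is unbounded or $V'$ is unbounded ($\nu=e^{-V}dt$). You would need intervals on which $V'$ changes by a bounded factor while $V'$ times the length tends to infinity, and you have not checked this. It also gives nothing for densities comparable to constants on compact $S$.

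There a genuinely infinite-dimensional construction is needed. The max-type cutoff over $k\simeq\log(1/p)$ coordinates that you mention only as a ``worry'' has the right shape, but it fails with a fixed radius. Since $a_j\to0$, only finitely many coordinates have $d_a$-width comparable to $r$; for large $j$, $I_j=S$ has $d_a$-width $a_j|S|\ll r$. A layer of $d_a$-thickness $h$ has relative mass about $h/(a_j|I_j|)$ in coordinate $j$. Keeping the layer of size $O(p)$ therefore forces a cost of order $rp\sum_{j\leq k}(a_j|I_j|)^{-1}$, which is much larger than $kp$.

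\textbf{The missing idea is the paper's scaling.} Take $B_N=B(o,r_N)$ with $o=(b,b,\dots)$ and $r_N=ca_N$. Then, for $j\leq N$, all half-widths $s_{j,N}=r_N/a_j\leq c$ lie where $w_0\leq w\leq W$, and all $N$ coordinates have the same $d_a$-width $r_N$. With $E_N=\{|x_j-b|\leq t_\theta(s_{j,N}),\ j\leq N\}$ one gets $p_N=\theta^N$. The cutoff is $f_N=(1-\rho_N/h_N)_+$, with $\rho_N=\max_{j\leq N}a_j(|x_j-b|-t_\theta(s_{j,N}))_+$ and $h_N=\eta r_N/N$. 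The density bounds give $v(t_\theta(s)+\eta s/N)\leq\theta v(s)(1+1/(2N))$. So the layer has relative measure at most $e^{1/2}p_N$, and the cost is at most $(e^{1/2}/\eta)Np_N\simeq p_N\log(1/p_N)$.

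Two further corrections. First, the layer must have measure $O(p)$ with gradient of size $N/r_N$; a layer of measure of order $kp$, as you wrote, would give cost of order $k^2p$. Second, $\mu(\sigma B_N)/\mu(B_N)$ is not bounded; it is typically exponential in $N$. What suffices is that, by the product structure, a cylinder whose first $N$ sections lie in the intervals of $B_N$ has $\mu_{\sigma B_N}$-measure at most its $\mu_{B_N}$-measure.
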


\begin{proof}
Fix \(B=B_{d_a}(x,r)\), and set \(I_j=S\cap(x_j-r/a_j,x_j+r/a_j)\).
By the coordinate description of balls,
\(B=\Omega_{a,b}\cap\prod_{j\geq1}I_j\). Since
\(\mu(\Omega_{a,b})=1\) and \(\mu(B)>0\), the normalized restriction
factorizes as
\begin{equation*}
 \mu_B=\bigotimes_{j=1}^{\infty}\nu_j,
\qquad \nu_j=\frac{\nu|_{I_j}}{\nu(I_j)},
\end{equation*}
where each \(\nu_j\) is log-concave.

Let \(f\) and \(g\) satisfy (\ref{eq:productendpoint}). For
\(m\geq1\), write \(K_m=\prod_{j=1}^m I_j\),
\(\nu^{(m)}=\bigotimes_{j=1}^m\nu_j\), and
\(\nu_{>m}=\bigotimes_{j>m}\nu_j\). Equip \(\mathbb R^m\) with the
norm \(\|y\|_{a,m}=\max_{1\leq j\leq m}a_j|y_j|\). Then \(K_m\) lies
in the ball of radius \(r\) centered at \((x_1,\ldots,x_m)\).

For \(\nu_{>m}\)-almost every tail \(z\), we have \((y,z)\in B\) for
every \(y\in K_m\), and write \(f_z(y)=f(y,z)\) and
\(g_z(y)=g(y,z)\). By Fubini, these functions are integrable with
respect to \(\nu^{(m)}\) for almost every \(z\). Every rectifiable
curve in \(K_m\), with all coordinates \(j>m\) fixed at \(z\),
defines a curve in \(B\) of the same length, and hence \(g_z\) is an
upper gradient of \(f_z\).

Since \(\nu^{(m)}\) is a probability measure,
Lemma~\ref{lem:normed-logPI} gives
\[
\left\|f_z-\int_{K_m}f_z\,d\nu^{(m)}
\right\|_{L^{\Phi_0}(\nu^{(m)})} \leq Cr\int_{K_m}g_z\,d\nu^{(m)}.
\]
Let \(f_m\) be the conditional expectation of \(f\) with respect to
the first \(m\) coordinates, so that \(f_m(y)=\int
f(y,z)\,d\nu_{>m}(z)\). By Fubini's theorem and Minkowski's integral
inequality,
\[
\begin{aligned}
\|f_m-f_{B,\mu}\|_{L^{\Phi_0}(\nu^{(m)})} &\leq \int
\left\|f_z-\int_{K_m}f_z\,d\nu^{(m)}
\right\|_{L^{\Phi_0}(\nu^{(m)})}
\,d\nu_{>m}(z)\\
&\leq Cr\int_B g\,d\mu_B.
\end{aligned}
\]
Here the norm on the left-hand side coincides with
\(\|f_m-f_{B,\mu}\|_{L^{\Phi_0}(B,\mu_B)}\). Indeed, since the
coordinates generate the Borel \(\sigma\)-algebra, Doob's martingale
convergence theorem (see \cite{Kallenberg}) gives \(f_m\to f\)
almost everywhere, which, together with the Fatou property, yields
(\ref{eq:productendpoint}).

It remains to prove (\ref{eq:productlog}). Let \(w\) be the
continuous positive density of \(\nu\) on \(\operatorname{int}S\),
choose \(s_0>0\) with \([b-s_0,b+s_0]\subset\operatorname{int}S\),
and set
\begin{equation*}
 w_0=\min_{[b-s_0,b+s_0]}w>0,\quad
W=\max_{[b-s_0,b+s_0]}w<\infty, \quad v(s)=\nu((b-s,b+s)).
\end{equation*}
Thus \(v\) is continuous and strictly increasing on \([0,s_0]\),
with \(v(s)\geq2w_0s\). For \(0<s\leq s_0\) and \(0<\lambda\leq1\),
let \(t_\lambda(s)\in(0,s]\) be the unique number such that
\(v(t_\lambda(s))=\lambda v(s)\).

Fix \(0<c<s_0\) and \(0<\theta<1/2\), and write \(r_N=ca_N\),
\(s_{j,N}=r_N/a_j\), \(o=(b,b,\ldots)\), and \(B_N=B_{d_a}(o,r_N)\).
Since \((a_j)\) is nonincreasing, we have \(s_{j,N}\leq c\) for
\(j\leq N\), so we may define
\[
E_N=\{x\in B_N: |x_j-b|\leq t_\theta(s_{j,N}),\ 1\leq j\leq N\},
\]
Taking into account the product structure of \(\mu_{B_N}\), its
relative measure is given by
\begin{equation*}
 p_N=\mu_{B_N}(E_N) =\prod_{j=1}^N
\frac{v(t_\theta(s_{j,N}))}{v(s_{j,N})} =\theta^N.
\end{equation*}

Now put \(\eta=\theta w_0/(2W)\), \(h_N=\eta r_N/N\), and define
\[
\rho_N(x)=\max_{1\leq j\leq N}
a_j\bigl(|x_j-b|-t_\theta(s_{j,N})\bigr)_+, \qquad
f_N=(1-\rho_N/h_N)_+.
\]

Since \(\rho_N\) is \(1\)-Lipschitz with respect to \(d_a\), the
function \(f_N\) is \(1/h_N\)-Lipschitz, and \(\rho_N=0\) on
\(E_N\), so that \(f_N=1\) there. To estimate the measure of its
support, we use the density bounds, which give, for \(0<s\leq c\),
\[
s-t_\theta(s)\geq\frac{(1-\theta)w_0}{W}s
>\frac{\eta s}{N}.
\]

Consequently, the enlarged intervals remain inside \((b-s,b+s)\),
which implies
\begin{equation}
\label{eq:productenlarge}
\begin{aligned}
v(t_\theta(s)+\eta s/N)
&\leq\theta v(s)+\frac{2W\eta s}{N}\\
&\leq\theta v(s)(1+1/(2N)).
\end{aligned}
\end{equation}
Taking into account that \(h_N/a_j=\eta s_{j,N}/N\), the product
structure of \(\mu_{\sigma B_N}\) and (\ref{eq:productenlarge})
give, for every \(\sigma\geq1\),
\[
\begin{aligned}
\mu_{\sigma B_N}\{\rho_N<h_N\} &=\prod_{j=1}^N
\frac{v(t_\theta(s_{j,N})+h_N/a_j)}
{v(\sigma s_{j,N})}\\
&\leq\theta^N(1+1/(2N))^N \leq e^{1/2}p_N.
\end{aligned}
\]

Since \(0\leq f_N\leq\chi_{\{\rho_N<h_N\}}\), we have
\((f_N)_{B_N,\mu}\leq e^{1/2}p_N\to0\). Combining this with
\(f_N=1\) on \(E_N\), we conclude that, for all sufficiently large
\(N\),
\begin{equation}
\label{eq:productosc} |f_N-(f_N)_{B_N,\mu}|
\geq\tfrac12\qquad\text{on }E_N.
\end{equation}

To apply Lemma~\ref{lem:test}, we need to estimate the integral of
an upper gradient of \(f_N\). We claim that
\[
g_N=h_N^{-1}\chi_{\{\rho_N<h_N\}}
\]
is an upper gradient of \(f_N\). Indeed, since \(\rho_N\) is
\(1\)-Lipschitz, along any arc-length parametrized curve the
derivative of \(f_N\) is bounded by \(1/h_N\) on \(\{\rho_N<h_N\}\)
and vanishes almost everywhere on its complement, including the
boundary level. Using the preceding estimate for the measure of
\(\{\rho_N<h_N\}\), we obtain
\begin{equation}
\label{eq:productgrad}
\begin{aligned}
r_{B_N}\int_{\sigma B_N}g_N\,d\mu_{\sigma B_N}
&\leq\frac{e^{1/2}}{\eta}Np_N\\
&=\frac{e^{1/2}}{\eta\log(1/\theta)} p_N\log\frac1{p_N}.
\end{aligned}
\end{equation}
Since \(p_N=\theta^N\) decreases to zero and
\(p_N/p_{N+1}=1/\theta\), the sets \(E_N\), the functions \(f_N\),
and their upper gradients \(g_N\) satisfy the assumptions of
Lemma~\ref{lem:test}, by (\ref{eq:productosc}) and
(\ref{eq:productgrad}), for all sufficiently large \(N\). This
proves (\ref{eq:productlog}).

It remains to establish the final equivalence. If
(\ref{eq:intro-PI}) holds, then (\ref{eq:productlog}) and
Lemma~\ref{lem:LlogL-X} imply
\[
L\log L(0,1)\hookrightarrow X.
\]
Conversely, if \(L\log L(0,1)\hookrightarrow X\), then
(\ref{eq:LlogL-X-embedding}) and (\ref{eq:productendpoint}) give
\[
\|f-f_{B,\mu}\|_{X(B,\mu_B)} \preceq r_B\int_Bg\,d\mu_B.
\]
Thus (\ref{eq:intro-PI}) holds with \(\sigma=1\), which completes
the proof.

\end{proof}

Taking \(\nu\) to be normalized Lebesgue measure on \([0,1]\) gives
the Hilbert cube \([0,1]^{\mathbb N}\) with the weighted supremum
metric \(d_a\), for every nonincreasing sequence of positive weights
\(a_j\to0\), for instance \(a_j=2^{-j}\). Theorem~\ref{thm:product}
also applies to products of the standard Gaussian law when
\(a_j^2\log j\to0\), and of the exponential law on \([0,\infty)\)
when \(a_j\log j\to0\), choosing \(b>0\) in the latter case.

The following example shows that the logarithmic endpoint can remain
sharp even when \(\underline{\operatorname{ord}}_{\rm
loc}\mu(x)=\infty\) at every point. Thus, finiteness of the lower
local order is not necessary for the logarithmic barrier to hold.

\begin{example}
\label{ex:product-log} Let \(\nu\) be normalized Lebesgue measure on
\([0,1]\), let \(b=1/2\), and take \(a_j=1/\log(j+1)\). By
Theorem~\ref{thm:product}, the Hilbert cube \(([0,1]^{\mathbb
N},d_a,\mu)\) satisfies the \(L\log L\) endpoint inequality
(\ref{eq:productendpoint}), and every r.i.\ space \(X\) for which it
supports (\ref{eq:intro-PI}) satisfies \(U_X(t)\preceq\log t\).

On the other hand, the lower local order is infinite at every point.
Indeed, let \(x\in[0,1]^{\mathbb N}\) and \(0<r<1/8\). If
\(a_j>4r\), the interval \(\{t\in[0,1]:|t-x_j|<r/a_j\}\) has length
at most \(2r/a_j<1/2\). Since \(a_j>4r\) for \(1\leq
j<e^{1/(4r)}-1\), the product structure of the balls gives
\[
\psi_x(r)\geq \bigl(e^{1/(4r)}-2\bigr)\log2, \qquad
\psi_x(r)=-\log\mu(B(x,r)).
\]
Consequently, \(r^\beta\psi_x(r)\to\infty\) as \(r\downarrow0\) for
every \(\beta>0\), and hence
\[
\underline{\operatorname{ord}}_{\rm loc}\mu(x)=\infty
\qquad\text{for every }x\in[0,1]^{\mathbb N}.
\]
\end{example}

\subsection{Gaussian measures on Hilbert spaces}
\label{sec:hilbert-gauss}

We first consider a separable Banach space \(E\) equipped with a
centered Gaussian Radon probability measure \(\gamma\) whose support
is infinite-dimensional. Replacing \(E\) by
\(\operatorname{supp}\gamma\), we may assume that \(\gamma\) has
full support.

In this setting, \(\dim_H^+\gamma=\infty\). Indeed, for every
\(m\geq1\), there exists a continuous linear map \(T:E\to\mathbb
R^m\) whose image measure is a nondegenerate Gaussian probability.
If \(F\subset E\) is Borel with \(\gamma(F)=1\), inner regularity
gives a compact set \(K\subset F\) with \(\gamma(K)>0\). Since the
image measure is absolutely continuous with respect to Lebesgue
measure, \(T(K)\) has positive Lebesgue measure and hence \(\dim_H
T(K)=m\). As \(T\) is Lipschitz, \(\dim_H F\geq m\) for every \(m\),
and therefore \(\dim_H^+\gamma=\infty\).

Let \(H_\gamma\subset E\) denote the Cameron--Martin space of
\(\gamma\), equipped with its norm \(|\cdot|_{H_\gamma}\). Recall
that \(H_\gamma\) is dense in \(E\) (see \cite{BogachevGaussian}).
We next recall a standard consequence of the Cameron--Martin
formula, including a short proof for completeness.

\begin{lemma}
\label{lem:CM} For every \(h\in H_\gamma\) and \(s>0\),
\[
\gamma(B(h,s)) \geq e^{-|h|_{H_\gamma}^2/2}\gamma(B(0,s)).
\]
\end{lemma}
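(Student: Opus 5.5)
The plan is to use the Cameron--Martin formula for the shifted measure together with the symmetry of the ball \(B(0,s)\) and Jensen's inequality.

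\emph{Step 1: change of variables.} For \(h\in H_\gamma\), the translate \(\gamma_h(A)=\gamma(A-h)\) is absolutely continuous with respect to \(\gamma\), with density
\[
\frac{d\gamma_h}{d\gamma}(x)=\exp\Bigl(\hat h(x)-\tfrac12|h|_{H_\gamma}^2\Bigr),
\]
where \(\hat h\) is the centered Gaussian random variable (an element of the closure of \(E^*\) in \(L^2(\gamma)\)) associated with \(h\); see \cite{BogachevGaussian}. Since \(B(h,s)=h+B(0,s)\), this gives
\[
\gamma(B(h,s))=\gamma_h(B(0,s))
=e^{-|h|_{H_\gamma}^2/2}\int_{B(0,s)}e^{\hat h}\,d\gamma .
\]

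\emph{Step 2: symmetrization.} The ball \(B(0,s)\) is symmetric, and \(\gamma\) is centered, hence invariant under \(x\mapsto -x\). We may choose \(\hat h\) to be odd, \(\hat h(-x)=-\hat h(x)\) \(\gamma\)-a.e., since it is an \(L^2\)-limit of linear functionals. Therefore
\[
\int_{B(0,s)}e^{\hat h}\,d\gamma=\int_{B(0,s)}\cosh(\hat h)\,d\gamma\geq\gamma(B(0,s)),
\]
because \(\cosh\geq1\). Alternatively, apply Jensen's inequality to the probability \(\gamma_{B(0,s)}\), using \(\int_{B(0,s)}\hat h\,d\gamma=0\) by oddness. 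Combining the two steps yields the claimed bound.

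\emph{Main obstacle.} The only delicate point is justifying the oddness of \(\hat h\) when \(h\notin j(E^*)\). We handle it by approximating \(\hat h\) in \(L^2(\gamma)\) by linear functionals \(\ell_k\in E^*\), which are odd. A subsequence converges a.e., so the limit is odd a.e. Note also that \(\gamma(B(0,s))>0\) by full support.
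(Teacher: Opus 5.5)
Your proposal is correct and is essentially the paper's argument: the Cameron--Martin formula, symmetrization of the integrand to \(\cosh\hat h\) using the symmetry of \(\gamma\) and \(B(0,s)\), and \(\cosh\geq1\); your justification that \(\hat h\) is odd fills in a detail the paper leaves implicit. One harmless slip: with \(\gamma_h(A)=\gamma(A-h)\) one has \(\gamma(h+B(0,s))=\gamma_{-h}(B(0,s))\), so the integrand is \(e^{-\hat h}\) as in the paper, which makes no difference after symmetrization.
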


\begin{proof}
By the Cameron--Martin formula (see \cite{BogachevGaussian}),
\[
\gamma(B(h,s)) =e^{-|h|_{H_\gamma}^2/2} \int_{B(0,s)}e^{-\hat
h}\,d\gamma,
\]
where \(\hat h\) is the measurable linear functional associated with
\(h\). Since \(\gamma\) and \(B(0,s)\) are symmetric, averaging the
integrand with its reflection gives
\[
\gamma(B(h,s)) =e^{-|h|_{H_\gamma}^2/2} \int_{B(0,s)}\cosh\hat
h\,d\gamma \geq e^{-|h|_{H_\gamma}^2/2}\gamma(B(0,s)).
\]
\end{proof}

The preceding lemma allows us to transfer estimates for the measure
of balls centered at the origin to balls centered at points of
\(H_\gamma\). Since \(H_\gamma\) is dense in \(E\), this provides a
way to apply our results on lower local order and obtain
restrictions on the growth of \(U_X\).

\begin{corollary}
\label{cor:Gauss-order} Suppose that, for some \(\beta>0\),
\[
-\log\gamma(B(0,r))=O(r^{-\beta}) \qquad\text{as }r\downarrow0.
\]
Then
\[
\underline{\operatorname{ord}}_{\rm loc}\gamma(h)<\infty
\qquad\text{for every }h\in H_\gamma.
\]
Consequently, if \((E,\|\cdot\|,\gamma)\) supports the testing
\((X,L^1)\)-Poincar\'e inequality (\ref{eq:PI-B}), then
\[
\frac{U_X(t)}{\log t}\not\longrightarrow\infty \qquad\text{as
}t\to\infty.
\]
\end{corollary}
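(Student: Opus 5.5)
The first claim follows from Lemma~\ref{lem:CM}, and the second by contradiction using Lemma~\ref{lem:vertical} and Lemma~\ref{lem:propagation}.

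\emph{Finite lower local order on \(H_\gamma\).} Fix \(h\in H_\gamma\). By hypothesis there are \(C,r_0>0\) with \(\gamma(B(0,r))\geq\exp(-Cr^{-\beta})\) for \(0<r<r_0\). Lemma~\ref{lem:CM} then gives
\[
\gamma(B(h,r))\geq e^{-|h|_{H_\gamma}^2/2}\exp(-Cr^{-\beta})\geq \exp(-C'r^{-\beta})
\]
for all small \(r\), with \(C'=C+|h|_{H_\gamma}^2\) say, since \(r^{-\beta}\geq1\) for \(r\leq1\). This is condition (ii) of Lemma~\ref{lem:order-SB} along any sequence \(r_j\downarrow0\). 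Hence \(\underline{\operatorname{ord}}_{\rm loc}\gamma(h)<\infty\).

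\emph{The growth restriction.} Suppose the testing inequality (\ref{eq:PI-B}) holds and \(U_X(t)/\log t\to\infty\). By Lemma~\ref{lem:vertical}, every \(h\in H_\gamma\) satisfies \(\sup_{r>0}D_h(r)\leq e^{T_0}\), with \(T_0\) depending only on \(C_P\) and \(X\). The Cameron--Martin space \(H_\gamma\) is dense in \(E\), and by full support it is dense in \(\operatorname{supp}\gamma=E\). Since the standing assumption \(0<\gamma(B)<\infty\) holds for every ball, the second part of Lemma~\ref{lem:propagation} applies with \(G=H_\gamma\) and \(D_0=e^{T_0}\): the radii \(r_j\) can be arbitrary, because the bound holds at all scales. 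Its hypothesis \(\sup_{t\geq2} U_X(t)/\log t=\infty\) is implied by divergence. Therefore \(\gamma\) is doubling.

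\emph{Contradiction with infinite dimension.} We have shown \(\dim_H^+\gamma=\infty\). On the other hand, a doubling measure with full support forces \(E\) to be geometrically doubling, hence \(\dim_H E<\infty\). This contradicts \(\dim_H^+\gamma\leq\dim_H E\). One could also argue directly: by part (a) of Lemma~\ref{lem:dim-point}, doubling gives \(\underline d_\gamma\leq\log_2 C_D\) everywhere. This should contradict the infinite-dimensional small-ball decay, but that decay is not assumed to be superpolynomial, so I would rely on the Hausdorff dimension argument. I expect this last step to be the only delicate point. It needs the standard fact that doubling with full support forces finite Hausdorff dimension of \(E\), and that fact is cited from \cite{HeinonenLectures} in the paper.
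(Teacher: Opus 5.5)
Your proof is correct and follows the paper's argument. Lemma~\ref{lem:CM} gives finite lower local order on \(H_\gamma\); then the (iii)\(\Rightarrow\)(i) mechanism of Theorem~\ref{thm:SB} (Lemma~\ref{lem:vertical} together with the second part of Lemma~\ref{lem:propagation}, applied to the dense but \(\gamma\)-null set \(G=H_\gamma\)) forces doubling, which contradicts \(\dim_H^+\gamma=\infty\). This explicit dense-set route is exactly what the paper means by ``the argument in the proof of Theorem~\ref{thm:SB}'', since Theorem~\ref{thm:SB} itself would require a full-measure set.
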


\begin{proof}
By Lemma~\ref{lem:CM}, for every \(h\in H_\gamma\),
\[
-\log\gamma(B(h,r)) \leq \frac{|h|_{H_\gamma}^2}{2}
-\log\gamma(B(0,r)) =O(r^{-\beta}),
\]
which implies that the lower local order is finite at every point of
\(H_\gamma\).

Now suppose that \(U_X(t)/\log t\to\infty\). Since \(H_\gamma\) is
dense in \(E\), the argument in the proof of Theorem~\ref{thm:SB}
would imply that \(\gamma\) is doubling, contradicting
\(\dim_H^+\gamma=\infty\).
\end{proof}

We now specialize to the Hilbertian setting. Let \(H\) be a
separable Hilbert space and let \(\mu\) be a centered Gaussian
measure on \(H\) with injective covariance operator \(Q\). We equip
\(H\) with its norm distance. Then \(\operatorname{supp}\mu=H\), and
every ball has positive measure. If \(H\) is infinite-dimensional,
the preceding argument gives \(\dim_H^+\mu=\infty\), so that \(\mu\)
is non-doubling.

All upper gradients below are taken with respect to the ambient
Hilbert norm, rather than the Cameron--Martin norm.

We shall use the following standard properties of log-concave
measures, which apply in particular to restrictions to convex sets
and finite-dimensional projections (see \cite{Borell}). We include
their short proofs for completeness.

\begin{lemma}
\label{lem:LC-stability} Let \(\nu\) be a log-concave probability
measure on a separable Banach space \(E\).
\begin{enumerate}
\item[\rm (i)] If \(D\subset E\) is a convex Borel set
with \(\nu(D)>0\), then \(\nu_D=\nu|_D/\nu(D)\) is log-concave.

\item[\rm (ii)] If \(T:E\to F\) is a continuous
linear map into a Banach space \(F\), then the image measure of
\(\nu\) under \(T\) is log-concave.

\item[\rm (iii)] For every \(x_0\in E\), the function
\(r\mapsto\log\nu(B(x_0,r))\) is concave on
\(\{r>0:\nu(B(x_0,r))>0\}\).
\end{enumerate}
\end{lemma}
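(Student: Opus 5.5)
We prove the three assertions separately. Throughout we use the definition of log-concavity on a separable Banach space (Borell's): \(\nu\) is log-concave if
\[
\nu(\lambda A+(1-\lambda)B)\geq\nu(A)^\lambda\nu(B)^{1-\lambda}
\]
for all Borel sets \(A,B\subset E\) and \(0<\lambda<1\). If \(\lambda A+(1-\lambda)B\) need not be Borel, the left-hand side is read as an inner measure; this makes no difference below, since every set we form is Borel or a ball.

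For (i), let \(A,B\) be Borel. Since \(D\) is convex,
\[
\lambda(A\cap D)+(1-\lambda)(B\cap D)\subset(\lambda A+(1-\lambda)B)\cap D.
\]
Monotonicity and the log-concavity of \(\nu\) therefore give
\[
\nu\bigl((\lambda A+(1-\lambda)B)\cap D\bigr)\geq\nu(A\cap D)^\lambda\nu(B\cap D)^{1-\lambda}.
\]
Dividing by \(\nu(D)=\nu(D)^\lambda\nu(D)^{1-\lambda}\) gives the inequality for \(\nu_D\).

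For (ii), let \(A',B'\subset F\) be Borel. Linearity of \(T\) gives
\[
\lambda T^{-1}(A')+(1-\lambda)T^{-1}(B')\subset T^{-1}\bigl(\lambda A'+(1-\lambda)B'\bigr).
\]
Applying log-concavity of \(\nu\) to the Borel sets \(T^{-1}(A')\) and \(T^{-1}(B')\) yields the inequality for \(\nu\circ T^{-1}\). Continuity of \(T\) is used only to ensure that the preimages are Borel.

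For (iii), fix \(r,s>0\) and \(\lambda\in(0,1)\). The key observation is that open balls with a common center satisfy
\[
\lambda B(x_0,r)+(1-\lambda)B(x_0,s)\subset B\bigl(x_0,\lambda r+(1-\lambda)s\bigr).
\]
Indeed, \(\lambda(x_0+u)+(1-\lambda)(x_0+v)=x_0+\lambda u+(1-\lambda)v\), and
\[
\|\lambda u+(1-\lambda)v\|<\lambda r+(1-\lambda)s
\]
whenever \(\|u\|<r\) and \(\|v\|<s\). Log-concavity then gives
\[
\nu\bigl(B(x_0,\lambda r+(1-\lambda)s)\bigr)\geq\nu(B(x_0,r))^\lambda\,\nu(B(x_0,s))^{1-\lambda}.
\]
Taking logarithms on the set where these measures are positive gives concavity. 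That set is an interval \((r_0,\infty)\) or \([r_0,\infty)\), since the ball measure is monotone in \(r\), so concavity on it is meaningful.

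The only delicate point is measurability and the definition of log-concavity in infinite dimensions. I would settle this by invoking Borell's definition with inner measures, as cited in \cite{Borell}. Alternatively, one can note that in each case the right-hand set containing the Minkowski combination is Borel: it is a preimage, an intersection with \(D\), or a ball. Monotonicity of inner measure then suffices.
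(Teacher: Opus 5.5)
Your proof is correct and is essentially the paper's argument: the same inclusion \(\lambda(A\cap D)+(1-\lambda)(B\cap D)\subset(\lambda A+(1-\lambda)B)\cap D\) for (i), log-concavity applied to preimages under the linear map \(T\) for (ii), and the same Minkowski inclusion of concentric balls for (iii). One small correction to your measurability aside, which the paper does not address: in (i) and (ii) the sets containing the Minkowski combinations need not be Borel, since they are the trace on \(D\), or the preimage under \(T\), of a possibly non-Borel set, so you do need the inner-measure convention there; moreover, in (ii) the required inequality \((\nu\circ T^{-1})_*(S)\geq\nu_*(T^{-1}S)\), with \(S=\lambda A'+(1-\lambda)B'\), does not follow from monotonicity (which gives the reverse inequality) but from inner regularity of \(\nu\), because a compact \(L\subset T^{-1}S\) has compact image \(T(L)\subset S\) with \(\nu(T^{-1}(T(L)))\geq\nu(L)\).
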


\begin{proof}
For (i), the convexity of \(D\) gives
\[
(1-t)(U\cap D)+t(V\cap D) \subset ((1-t)U+tV)\cap D,
\]
and the assertion follows from the log-concavity of \(\nu\) after
normalization. Similarly, (ii) follows by applying log-concavity to
the inverse images of Borel sets under \(T\). Finally, the inclusion
\[
(1-t)B(x_0,r)+tB(x_0,s) \subset B(x_0,(1-t)r+ts)
\]
and the log-concavity of \(\nu\) give (iii).
\end{proof}

We first establish the logarithmic endpoint, without any additional
assumption on the small-ball behavior of \(\mu\).

\begin{theorem}
\label{thm:HilbertGauss} Let \(H\) be a separable Hilbert space and
let \(\mu\) be a centered Gaussian probability measure on \(H\) with
injective covariance operator. Then, for every ball
\(B=B(x_0,r_B)\subset H\), every \(f\in L^1(B,\mu_B)\), and every
integrable upper gradient \(g\) of \(f\) along rectifiable curves
contained in \(B\), it holds that
\begin{equation}
\label{eq:HilbertGauss} \|f-f_{B,\mu}\|_{L^{\Phi_0}(B,\mu_B)}
\preceq r_B\int_Bg\,d\mu_B.
\end{equation}
The implicit constant is absolute. Consequently, if \(L\log
L(0,1)\hookrightarrow X\), then \((H,\|\cdot\|,\mu)\) supports
(\ref{eq:intro-PI}) with \(\sigma=1\).
\end{theorem}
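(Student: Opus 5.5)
We reduce to finite dimensions by conditional expectations, in the same way as in the proof of Theorem~\ref{thm:product}, with Lemma~\ref{lem:normed-logPI} (Euclidean norm case, i.e. Proposition~\ref{prop:logPI}) supplying the uniform finite-dimensional estimate.

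Diagonalize the covariance operator \(Q\) in an orthonormal basis \((e_j)\), so that \(\mu=\bigotimes_j N(0,\lambda_j)\) in the coordinates \(x_j=\langle x,e_j\rangle\). Fix \(B=B(x_0,r_B)\). The measure \(\mu_B\) is log-concave by Lemma~\ref{lem:LC-stability}(i). However, unlike the product setting, the Hilbert ball does not factorize, so we split coordinates as \(x=(y,z)\) with \(y\in\mathbb R^m\) the first \(m\) coordinates and \(z\) the tail. For fixed \(z\), the section
\[
B_z=\{y\in\mathbb R^m:(y,z)\in B\}
\]
is a Euclidean ball in \(\mathbb R^m\) of radius at most \(r_B\), since \(\|y-y_0\|^2<r_B^2-\|z-z_0\|^2\). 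The conditional law of \(y\) given \(z\) under \(\mu_B\) is the Gaussian \(\bigotimes_{j\le m}N(0,\lambda_j)\) restricted to \(B_z\) and normalized. This is an absolutely continuous log-concave probability measure concentrated in a Euclidean ball of radius at most \(r_B\).

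For \(\mu\)-a.e.\ \(z\) with \(B_z\ne\emptyset\), the slice \(f_z=f(\cdot,z)\) is integrable, and \(g_z=g(\cdot,z)\) is an upper gradient of \(f_z\) along curves in \(B_z\). Indeed, curves in \(B_z\) with \(z\) frozen are curves in \(B\) of the same Hilbert length. Proposition~\ref{prop:logPI}, or Lemma~\ref{lem:normed-logPI} with \(K\) the Euclidean unit ball, then gives
\[
\|f_z-c_z\|_{L^{\Phi_0}(\rho_z)}\le Cr_B\int g_z\,d\rho_z,
\]
where \(\rho_z\) is the conditional measure on \(B_z\) and \(c_z=\int f_z\,d\rho_z\). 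The constant is absolute and independent of \(m\).

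The key difficulty is that averaging these slice estimates controls \(f-c_z\), not \(f-f_{B,\mu}\). The fluctuation of \(c_z\) in \(z\) must therefore be handled separately. I would try to avoid this problem by working directly with finite-dimensional marginals. Let \(\pi_m\) be the orthogonal projection onto \(\mathrm{span}\{e_1,\dots,e_m\}\), and set
\[
B^{(m)}=B\cap\{x:\pi_m^\perp x=\pi_m^\perp x_0\}.
\]
This does not carry the full measure either. The cleaner route is the one used for Theorem~\ref{thm:product}: prove the Lipschitz case first, then pass to general \(f\).

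For Lipschitz (cylindrical) \(f\) depending only on \(\pi_m x\), the relevant measure is the image \((\pi_m)_\#\mu_B\). By Lemma~\ref{lem:LC-stability}(ii) it is log-concave, it is concentrated in \(\pi_m x_0+r_B\bar B_{\mathbb R^m}\) because \(\pi_m\) is 1-Lipschitz, and it is absolutely continuous. Proposition~\ref{prop:logPI}'s Lipschitz step, or directly Bobkov's \cite[Corollary~2.2]{Bobkov} with the coarea and layer-cake argument, gives
\[
\|f-f_{B,\mu}\|_{L^{\Phi_0}(\mu_B)}\preceq r_B\int|\nabla_m f|\,d\mu_B .
\]
Here \(|\nabla_m f|(x)\) is the Euclidean gradient of the cylindrical profile, and it is bounded by the local Lipschitz constant in \(H\).

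For general \(f\) with upper gradient \(g\), take \(f_m=\mathbb E_{\mu_B}[f\mid \pi_m]\). One must show that \(f_m\) has an upper gradient (on the projected convex body) bounded in \(L^1\) by \(\int g\,d\mu_B\). This is the main obstacle, because \(\mu_B\) is not a product and the conditional measures move with \(y\).

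My fallback is the isoperimetric route, which bypasses gradients of conditional expectations. First establish the infinite-dimensional isoperimetric inequality
\[
2r_B\,I_{\mu_B}(s)\ge s\log\tfrac1s+(1-s)\log\tfrac1{1-s}.
\]
To do this, approximate a Borel set \(A\subset B\) by cylinder sets. The marginals \((\pi_m)_\#\mu_B\) satisfy the inequality uniformly in \(m\). Moreover, \((\pi_m^{-1}\pi_m A)_\varepsilon\supset\) the appropriate enlargement, since \(\pi_m\) is 1-Lipschitz and distances in \(\mathbb R^m\) are dominated by those in \(H\). One then passes to the limit via martingale convergence of \(\mu_B(A\mid\pi_m)\).

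With the profile bound, proceed as in Proposition~\ref{prop:logPI}. For a general upper gradient, apply the metric coarea inequality
\[
\int_B g\,d\mu\ge\int_0^\infty\mu^+(\{h>t\})\,dt,
\]
valid for upper gradients via the 1-Lipschitz truncations, to \(h_\pm=(f-m_f)_\pm\). Then use layer cake and \(\phi_{L^{\Phi_0}}(s)\simeq s\log(e/s)\) to get the estimate. The final statement then follows from (\ref{eq:LlogL-X-embedding}).
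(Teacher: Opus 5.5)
There is a genuine gap: none of your three routes handles a general \(f\in L^1(B,\mu_B)\) whose upper gradient \(g\) is merely Borel and integrable. The slice route and the conditional-expectation route stop at the obstacles you name yourself: the fluctuation of \(c_z\), and the lack of an upper gradient for \(\mathbb E_{\mu_B}[f\mid\pi_m]\), whose conditional measures move with \(y\).

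The isoperimetric fallback rests on the inequality \(\int_B g\,d\mu\ge\int_0^\infty\mu^+(\{h>t\})\,dt\) for an arbitrary upper gradient, and this is false. The Bobkov--Houdr\'e coarea inequality holds for Lipschitz \(h\), with the local slope in place of \(g\); its proof uses dominated convergence. For general \(h\), the outer Minkowski content \(\mu^+\) can be infinite on every level set. Already in \(\mathbb R^2\) with Lebesgue measure, proceed as follows:
\begin{itemize}
\item let \(\{c_k\}\) be dense in an open set \(U\), and choose \(r_k\) with \(\sum_kr_k<\infty\) and \(4\pi\sum_kr_k^2<|U|/2\);
\item let \(\phi_k\) be the radial cutoff equal to \(1\) on \(B(c_k,r_k)\) and to \(0\) off \(B(c_k,2r_k)\), and set \(h=\sup_k\phi_k\).
\end{itemize}
Then \(g=\sum_kr_k^{-1}\chi_{B(c_k,2r_k)}\) is an integrable upper gradient of \(h\). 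But each \(\{h>t\}\), \(0<t<1\), is dense in \(U\) and has measure \(<|U|/2\), so \(\mu^+(\{h>t\})=\infty\).

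Truncating \(h\) does not help. The truncations of a non-Lipschitz function are not Lipschitz, so using them requires the functional inequality for non-Lipschitz functions, which is what you are trying to prove. You are therefore thrown back on approximating \(f\) by Lipschitz functions while keeping control by \(\int g\,d\mu_B\), which is exactly the step you left open.

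Your set-level approximation is also set up backwards. The inclusion \(A_\varepsilon\subset(\pi_m^{-1}\pi_mA)_\varepsilon\) only bounds \(\mu_B(A_\varepsilon)\) from above, and convergence in measure of cylinder approximants does not control enlargements. This part is repairable: use compact inner approximation, the integrated form of the profile bound, and \(\mu_B\{|\pi_m^\perp x|>\delta\}\to0\).

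The idea you are missing is the paper's decoupling of the averaging measure from \(\mu_B\). Let \(P_n\) be the orthogonal projection onto the span \(H_n\) of the first \(n\) eigenvectors of \(Q\), and \(R_n=I-P_n\). Fix \(s<r_B\), put \(\delta=(r_B-s)/3\) and \(B_s=B(x_0,s)\). Average over the tail with a \emph{fixed} measure: the law \(\mu_n^\perp\) of \(R_nx\) under the full Gaussian \(\mu\), restricted to \(\{|z|<\varepsilon_n\}\) and normalized by \(m_n=\mu\{|R_nx|<\varepsilon_n\}\to1\). Under \(\mu\), \(R_nx\) is independent of \(P_nx\).

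Because this measure does not depend on \(y\), the averages \(\varphi_n(y)\) of \(f(y+z)\) and \(G_n(y)\) of \(g(y+z)\) satisfy the upper-gradient inequality along every curve in \(D_n=B_{H_n}(P_nx_0,s+\delta)\); indeed \(y+z\in B\) there for large \(n\). Lemma~\ref{lem:normed-logPI} already contains the finite-dimensional ACL and mollification step for non-Lipschitz functions. It is applied to \(\varphi_n\), \(G_n\), and the log-concave image \(\rho_n\) of \(\mu_{B_s}\) under \(P_n\), which is the measure you had already identified.

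The mismatch between the averaging measure and \(\mu_{B_s}\) costs nothing:
\begin{itemize}
\item \(\int G_n\,d\rho_n=\int_B g\,h_n\,d\mu\), with \(0\le h_n\le(m_n\mu(B_s))^{-1}\) and \(h_n\to\chi_{B_s}/\mu(B_s)\) in measure;
\item on \(B_s\), \(\varphi_n\circ P_n=m_n^{-1}\mathbb E_\mu[f\chi_B\chi_{\{|R_n\cdot|<\varepsilon_n\}}\mid\sigma(P_n)]\), which converges to \(f\) in \(L^1(B_s,\mu)\) by martingale convergence.
\end{itemize}
Fatou's lemma and \(s\uparrow r_B\) conclude, and no infinite-dimensional isoperimetric or coarea inequality is needed.
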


\begin{proof}
The finite-dimensional case follows from
Proposition~\ref{prop:logPI}, so we assume that \(H\) is
infinite-dimensional. By truncation and the Fatou property, we may
also assume that \(f\) is bounded.

Let \(Q\) be the covariance operator of \(\mu\), and let \((e_k)\)
be an orthonormal basis of eigenvectors of \(Q\), with
\(Qe_k=\lambda_ke_k\), \(\lambda_k>0\). Since \(Q\) is trace class
(see \cite{BogachevGaussian}), we have
\(\tau_n=\sum_{k>n}\lambda_k\to0\). Let \(P_n\) be the orthogonal
projection onto \(H_n=\operatorname{span}\{e_1,\ldots,e_n\}\), and
put \(R_n=I-P_n\), so that \(P_nx\) and \(R_nx\) are independent
under \(\mu\).

Choose \(\varepsilon_n\downarrow0\) such that
\(\tau_n/\varepsilon_n^2\to0\). By Chebyshev's inequality,
\begin{equation}
\label{eq:HG-mn} m_n:=\mu\{|R_nx|<\varepsilon_n\} \longrightarrow1.
\end{equation}

Write \(r=r_B\), fix \(0<s<r\), and put \(\delta=(r-s)/3\) and
\(B_s=B(x_0,s)\). Since \(\varepsilon_n+|R_nx_0|\to0\), for all
sufficiently large \(n\), every \(y\in
D_n:=B_{H_n}(P_nx_0,s+\delta)\) and every \(z\in R_nH\) with
\(|z|<\varepsilon_n\) satisfy
\[
|y+z-x_0| \leq |y-P_nx_0|+|z|+|R_nx_0| <s+2\delta<r.
\]
Thus \(y+z\in B\).

Let \(\mu_n\) and \(\mu_n^\perp\) be the image measures of \(\mu\)
under \(P_n\) and \(R_n\), respectively, and let \(\rho_n\) be the
image of \(\mu_{B_s}\) under \(P_n\). By independence,
\(\rho_n=\kappa_n\mu_n\), where
\[
\kappa_n(y) =\frac{\mu_n^\perp\{z:y+z\in B_s\}}{\mu(B_s)}
\leq\frac1{\mu(B_s)}.
\]
Since Gaussian measures are log-concave,
Lemma~\ref{lem:LC-stability}, applied first to the restriction to
\(B_s\) and then to the projection \(P_n\), shows that \(\rho_n\) is
log-concave. Since \(\rho_n=\kappa_n\mu_n\) and \(\mu_n\) is a
nondegenerate Gaussian measure on \(H_n\), the measure \(\rho_n\) is
absolutely continuous, and its density is log-concave in the usual
finite-dimensional sense (see \cite{Borell} and
\cite[p.~1917]{Bobkov}). Moreover, \(\rho_n(D_n)=1\).

For \(y\in D_n\), define
\[
\begin{aligned}
\varphi_n(y) &=\frac1{m_n}\int_{\{|z|<\varepsilon_n\}}
f(y+z)\,d\mu_n^\perp(z),\\
G_n(y) &=\frac1{m_n}\int_{\{|z|<\varepsilon_n\}}
g(y+z)\,d\mu_n^\perp(z).
\end{aligned}
\]
Every rectifiable curve in \(D_n\), with the remaining coordinates
fixed at \(z\), defines a curve in \(B\) of the same length whenever
\(|z|<\varepsilon_n\). Integrating the upper-gradient inequality for
\(f\) and using Tonelli's theorem therefore shows that \(G_n\) is an
upper gradient of \(\varphi_n\).

Let \(\lambda_n\) be the image of
\[
\rho_n\otimes \left(m_n^{-1}
\chi_{\{|z|<\varepsilon_n\}}\mu_n^\perp\right)
\]
under the map \((y,z)\mapsto y+z\). By independence,
\begin{equation*}
 d\lambda_n=h_n\,d\mu, \qquad h_n(x)=\frac1{m_n}
\chi_{\{|R_nx|<\varepsilon_n\}} \kappa_n(P_nx)
\leq\frac1{m_n\mu(B_s)}.
\end{equation*}
For all sufficiently large \(n\), the function \(h_n\) vanishes
outside \(B\), and hence \(\int
G_n\,d\rho_n=\int_Bgh_n\,d\mu<\infty\).

Applying Lemma~\ref{lem:normed-logPI} with the Euclidean unit ball
as \(K\), the convex domain \(A=D_n\), and radius \(s+\delta\), to
the absolutely continuous log-concave probability measure \(\rho_n\)
and the upper gradient \(G_n\), gives
\begin{equation}
\label{eq:HG-step} \left\|\varphi_n-\int\varphi_n\,d\rho_n
\right\|_{L^{\Phi_0}(\rho_n)} \leq C(s+\delta)\int_Bgh_n\,d\mu,
\end{equation}
where \(C\) is an absolute constant.

We now pass to the limit. Let \(\bar f=f\chi_B\) and \(\mathcal
F_n=\sigma(P_n)\). For \(x\in B_s\),
\[
\varphi_n(P_nx) =\frac1{m_n} \mathbb E_\mu\left[ \bar
f\,\chi_{\{|R_n\cdot|<\varepsilon_n\}} \mid\mathcal F_n\right](x),
\]
while
\[
\kappa_n(P_nx) =\frac{\mathbb E_\mu[\chi_{B_s}\mid\mathcal F_n](x)}
{\mu(B_s)}.
\]
The \(\sigma\)-algebras \(\mathcal F_n\) increase and generate the
Borel \(\sigma\)-algebra of \(H\). Thus Doob's martingale
convergence theorem (see \cite{Kallenberg}), together with
(\ref{eq:HG-mn}) and the \(L^1\)-contractivity of conditional
expectations, gives \(\varphi_n\circ P_n\to f\) in \(L^1(B_s,\mu)\).

Similarly, \(h_n\to\chi_{B_s}/\mu(B_s)\) in \(\mu\)-measure. Since
\(m_n\to1\), the functions \(h_n\) are uniformly bounded for all
sufficiently large \(n\). The absolute continuity of the integral of
\(g\) therefore gives
\[
\int_Bgh_n\,d\mu \longrightarrow \frac1{\mu(B_s)}\int_{B_s}g\,d\mu.
\]
On the other hand, since \(\varphi_n\circ P_n\to f\) in
\(L^1(B_s,\mu)\) and \(\rho_n\) is the image of \(\mu_{B_s}\) under
\(P_n\), we also have \(\int\varphi_n\,d\rho_n\to f_{B_s,\mu}\).

Since \(\rho_n\) is the image of \(\mu_{B_s}\) under \(P_n\), the
norm on the left-hand side of (\ref{eq:HG-step}) coincides with
\[
\left\|\varphi_n\circ P_n- \int\varphi_n\,d\rho_n
\right\|_{L^{\Phi_0}(B_s,\mu_{B_s})}.
\]
Passing to an almost everywhere convergent subsequence and using the
Fatou property, we obtain
\[
\|f-f_{B_s,\mu}\|_{L^{\Phi_0}(B_s,\mu_{B_s})} \leq
C(s+\delta)\int_{B_s}g\,d\mu_{B_s}.
\]

Finally, let \(s\uparrow r\). Since \(\mu(B_s)\uparrow\mu(B)\) and
\(f_{B_s,\mu}\to f_{B,\mu}\), for every
\(\lambda>Cr\int_Bg\,d\mu_B\), the preceding estimate gives, for
\(s\) sufficiently close to \(r\),
\[
\int_{B_s} \Phi_0\left(\frac{|f-f_{B_s,\mu}|}{\lambda}\right) \,d\mu
\leq\mu(B_s).
\]
Applying Fatou's lemma and taking the infimum over \(\lambda\)
proves (\ref{eq:HilbertGauss}), while the final assertion follows
from (\ref{eq:LlogL-X-embedding}).
\end{proof}

\begin{remark}
The \(L^1\)-Poincar\'e inequality in the Hilbertian Gaussian setting
can also be obtained from the curvature-dimension approach, using
the entropy convexity result of \cite[Proposition~2.3]{ASZ} and
\cite[Theorem~1]{RajalaCV}. Since Hilbert balls are convex, the
inequality holds with the same ball on both sides (see
\cite[Remark~1(i)]{RajalaCV}). Theorem~\ref{thm:HilbertGauss}
strengthens the oscillation norm from \(L^1\) to \(L\log L\).
\end{remark}

Theorem~\ref{thm:HilbertGauss} establishes the logarithmic endpoint
for every nondegenerate Gaussian measure on a Hilbert space, without
any additional assumption on its small-ball behavior. Under
two-sided power-type small-ball estimates, this endpoint is also
sharp.

\begin{corollary}
\label{cor:HilbertGauss} Suppose, in addition, that
\(\psi(r)=-\log\mu(B(0,r))\simeq r^{-\beta}\) as \(r\downarrow0\),
for some \(\beta>0\). Then the following are equivalent:
\begin{enumerate}
\item[\rm (i)] \((H,\|\cdot\|,\mu)\) supports
(\ref{eq:intro-PI});
\item[\rm (ii)] \((H,\|\cdot\|,\mu)\) satisfies
(\ref{eq:PI-B});
\item[\rm (iii)] \(L\log L(0,1)\hookrightarrow X\).
\end{enumerate}
\end{corollary}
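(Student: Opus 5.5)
The plan is to prove the cycle of implications (i)\(\Rightarrow\)(ii)\(\Rightarrow\)(iii)\(\Rightarrow\)(i). The first implication is immediate, since the \((X,L^1)\)-Poincar\'e inequality (\ref{eq:intro-PI}) always implies the testing version (\ref{eq:PI-B}), as noted after Definition~\ref{def:PI}. The implication (iii)\(\Rightarrow\)(i) is the final assertion of Theorem~\ref{thm:HilbertGauss}. That assertion combines the absolute \(L\log L\) endpoint (\ref{eq:HilbertGauss}) with the embedding estimate (\ref{eq:LlogL-X-embedding}) of Lemma~\ref{lem:LlogL-X}, and it gives (\ref{eq:intro-PI}) with \(\sigma=1\) for every locally integrable \(f\) and every upper gradient \(g\). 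If \(g\) is not integrable on \(B\), the right-hand side is infinite and there is nothing to prove.

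The substantive step is (ii)\(\Rightarrow\)(iii). I would apply the small-ball criterion, Lemma~\ref{lem:smallball}, at the point \(x=0\). The two-sided estimate \(\psi(r)\simeq r^{-\beta}\) is exactly one of its hypotheses. The other hypothesis is that \(\psi(r)=-\log\mu(B(0,r))\) is convex for small \(r\). I would obtain this from Lemma~\ref{lem:LC-stability}(iii): a Gaussian measure is log-concave, so \(r\mapsto\log\mu(B(0,r))\) is concave on \((0,\infty)\), since every ball has positive measure because \(\operatorname{supp}\mu=H\). Hence \(\psi\) is convex on all of \((0,\infty)\). Lemma~\ref{lem:smallball} then yields \(U_X(t)\preceq\log t\) for \(t\geq2\). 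By Lemma~\ref{lem:LlogL-X}, condition (i) of that lemma holds, and therefore \(L\log L(0,1)\hookrightarrow X\).

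The only point needing care is that log-concavity of the Gaussian measure on the infinite-dimensional space \(H\) holds in Borell's sense, so that Lemma~\ref{lem:LC-stability}(iii) applies. I would cite \cite{Borell} for this. I would also check that the cutoff functions used in Lemma~\ref{lem:smallball} are bounded Lipschitz functions, so that the testing inequality (\ref{eq:PI-B}) suffices. They are, because radial cutoffs are Lipschitz with respect to the Hilbert norm. Everything else is a direct citation of earlier results, so I do not expect a genuine obstacle.
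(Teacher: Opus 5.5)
Your proposal is correct and is essentially the paper's own proof. In both, (i)$\Rightarrow$(ii) is immediate, (iii)$\Rightarrow$(i) is the final assertion of Theorem~\ref{thm:HilbertGauss}, and (ii)$\Rightarrow$(iii) combines the convexity of $\psi$ from Lemma~\ref{lem:LC-stability}(iii) with the small-ball criterion Lemma~\ref{lem:smallball} at $x=0$, followed by Lemma~\ref{lem:LlogL-X}. Your extra checks — that every ball has positive measure, so $\psi$ is convex on all of $(0,\infty)$, and that the radial cutoffs are bounded Lipschitz, so the testing inequality suffices — are consistent with the paper.
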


\begin{proof}
The implication {\rm (iii)}\(\Rightarrow\){\rm (i)} follows from
Theorem~\ref{thm:HilbertGauss}, while {\rm (i)}\(\Rightarrow\){\rm
(ii)} is immediate. For {\rm (ii)}\(\Rightarrow\){\rm (iii)}, by
Lemma~\ref{lem:LC-stability}(iii), the function \(\psi\) is convex,
so Lemma~\ref{lem:smallball} gives \(U_X(t)\preceq\log t\). The
conclusion now follows from Lemma~\ref{lem:LlogL-X}.
\end{proof}

\begin{example}
Let \(\mu\) be the law of standard Brownian motion regarded as a
Gaussian measure on \(H=L^2(0,1)\). Its covariance operator is
injective, and
\[
-\log\mu(B(0,r))\sim\frac1{8r^2}, \qquad r\downarrow0
\]
(see \cite{LiShao}). Thus Corollary~\ref{cor:HilbertGauss} applies
with \(\beta=2\), providing a Hilbertian example of infinite upper
Hausdorff dimension, distinct from the weighted supremum product
models, for which the \(L\log L\) endpoint is sharp for both
(\ref{eq:intro-PI}) and (\ref{eq:PI-B}).
\end{example}

\begin{remark}
Assume that \((H,\|\cdot\|,\mu)\) satisfies (\ref{eq:PI-B}). The
upper bound \(\psi(r)=O(r^{-\beta})\) is sufficient, by
Corollary~\ref{cor:Gauss-order}, to rule out \(U_X(t)/\log
t\to\infty\). The stronger two-sided estimate \(\psi(r)\simeq
r^{-\beta}\) yields \(U_X(t)\preceq\log t\) and hence the exact
characterization in Corollary~\ref{cor:HilbertGauss}. For example,
if the eigenvalues of the covariance operator satisfy
\(\lambda_k\simeq k^{-\alpha}\), \(\alpha>1\), then \(\psi(r)\simeq
r^{-2/(\alpha-1)}\) (see \cite{LiShao,KuelbsLi}).
\end{remark}

\subsection{Wiener measure with the uniform norm}
\label{sec:Wiener}

We finally consider Wiener measure on \(C_0([0,1])\), equipped with
the uniform norm. In this setting, the logarithmic endpoint remains
sharp, with upper gradients taken with respect to the uniform
metric.

\begin{corollary}
\label{cor:Wiener} Let \(\gamma_W\) be Wiener measure on
\(C_0([0,1])\), equipped with \(\|\cdot\|_\infty\). Then
\(\gamma_W\) is non-doubling, and the following are equivalent:
\begin{enumerate}
\item[\rm (i)]
\((C_0([0,1]),\|\cdot\|_\infty,\gamma_W)\) supports
(\ref{eq:intro-PI});
\item[\rm (ii)]
\((C_0([0,1]),\|\cdot\|_\infty,\gamma_W)\) satisfies
(\ref{eq:PI-B});
\item[\rm (iii)]
\(L\log L(0,1)\hookrightarrow X\).
\end{enumerate}
\end{corollary}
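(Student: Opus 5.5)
The plan follows the Hilbertian case in Corollary~\ref{cor:HilbertGauss}, with the Euclidean norm replaced by the uniform norm. Non-doubling is immediate: \(\gamma_W\) is a centered Gaussian Radon measure on the separable Banach space \(C_0([0,1])\) with full support and infinite-dimensional support. The argument at the beginning of Section~\ref{sec:hilbert-gauss} then gives \(\dim_H^+\gamma_W=\infty\), and a doubling measure with full support would force finite Hausdorff dimension. The implication (i)\(\Rightarrow\)(ii) is trivial.

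For (ii)\(\Rightarrow\)(iii), apply Lemma~\ref{lem:smallball} at \(x=0\). By Lemma~\ref{lem:LC-stability}(iii), \(\psi(r)=-\log\gamma_W(B(0,r))\) is convex, since \(\gamma_W\) is log-concave. The classical Chung small-deviation estimate gives
\[
\psi(r)\sim\frac{\pi^2}{8r^2},\qquad r\downarrow0
\]
(see \cite{LiShao}), so \(\psi(r)\simeq r^{-2}\). Lemma~\ref{lem:smallball} yields \(U_X(t)\preceq\log t\), and Lemma~\ref{lem:LlogL-X} gives \(L\log L(0,1)\hookrightarrow X\).

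For (iii)\(\Rightarrow\)(i), it suffices, by (\ref{eq:LlogL-X-embedding}), to prove a dimension-free \(L\log L\)-Poincar\'e inequality
\[
\|f-f_{B,\gamma_W}\|_{L^{\Phi_0}(B,(\gamma_W)_B)}\preceq r_B\int_Bg\,d(\gamma_W)_B
\]
for uniform-norm balls. This would reproduce the proof of Theorem~\ref{thm:HilbertGauss}, using the normed version Lemma~\ref{lem:normed-logPI} in place of the Euclidean one. Independence of the finite-dimensional part and the remainder would come from the Lévy--Ciesielski (Schauder) expansion
\[
W=\sum_k\xi_k S_k,
\]
with i.i.d.\ standard normal \(\xi_k\). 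Let \(P_n\) be the piecewise-linear interpolation of the path at dyadic points of level \(n\). Then \(P_nW\) and \(R_nW=W-P_nW\) are independent Gaussian elements, and \(\|R_nW\|_\infty\to0\) almost surely. The range \(H_n\) of \(P_n\) consists of piecewise-linear functions and is finite dimensional. On \(H_n\) the sup norm restricted to \(H_n\) is a norm whose unit ball \(K\) is a symmetric convex body, and \(\|y+z-x_0\|_\infty\) is controlled by \(\|y-P_nx_0\|_\infty+\|z\|_\infty+\|R_nx_0\|_\infty\). Since \(P_n\) is a contraction in sup norm, \(\|R_nx_0\|_\infty\to0\) for every \(x_0\in C_0\).

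With \(B_s\), \(D_n=P_nx_0+(s+\delta)K\), \(\varphi_n\), \(G_n\), \(\rho_n=\kappa_n\mu_n\) and \(h_n\) defined exactly as before, Lemma~\ref{lem:LC-stability}(i)--(ii) shows \(\rho_n\) is absolutely continuous and log-concave on \(H_n\). Lemma~\ref{lem:normed-logPI} with this \(K\) gives the analogue of (\ref{eq:HG-step}) with an absolute constant. The limit passage uses the same ingredients as in Theorem~\ref{thm:HilbertGauss}: Doob convergence along \(\sigma(P_n)\), which generates the Borel \(\sigma\)-algebra of \(C_0\); uniform boundedness of \(h_n\); the Fatou property; and finally \(s\uparrow r\).

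The main obstacle is this last step. Two points need checking:
\begin{itemize}
\item[(a)] The maps \(P_n\) must have the right independence structure under \(\gamma_W\) even though they are not orthogonal projections. The Schauder expansion handles this, because \(P_nW\) is a function of \(\xi_1,\dots,\xi_{2^n}\) only.
\item[(b)] Curves in \(D_n\), translated by a fixed \(z\), must have the same sup-norm length in \(C_0\), so that \(G_n\) is an upper gradient of \(\varphi_n\) for the norm \(\|\cdot\|_K\). This holds because the translation is an isometry.
\end{itemize}
If either point causes trouble, the fallback is to obtain the inequality from the product model of Theorem~\ref{thm:product} via the Schauder coordinates. That route seems less direct, since the sup norm is not a weighted sup of the coordinates.
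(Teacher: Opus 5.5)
Your proposal is correct and follows essentially the paper's own proof. The paper uses the same dyadic piecewise-linear projections \(P_n\), the same transfer of the argument of Theorem~\ref{thm:HilbertGauss} via Lemma~\ref{lem:normed-logPI} with the sup-norm on the range, and the same use of Lemma~\ref{lem:smallball}, with convexity from Lemma~\ref{lem:LC-stability}(iii) and \(\psi(r)\simeq r^{-2}\). The differences are cosmetic: you get non-doubling from \(\dim_H^+\gamma_W=\infty\) rather than from \(V(2r)/V(r)\to\infty\), and you justify the independence of \(P_nW\) and \(R_nW\) via the L\'evy--Ciesielski expansion rather than the Brownian bridge decomposition; both variants are valid.
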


\begin{proof}
We first prove {\rm (iii)}\(\Rightarrow\){\rm (i)}. Let \(P_nx\) be
the piecewise linear interpolation of \(x\) at the dyadic points
\(k2^{-n}\), and put \(R_n=I-P_n\). Then \(P_n\) is a linear
projection of norm \(1\), and its range \(V_n\) is isometric to
\(\ell^\infty_{2^n}\), since the uniform norm of a piecewise linear
function is attained at a dyadic node.

For Brownian motion \(W\), the random elements \(P_nW\) and \(R_nW\)
are independent: on each dyadic interval the remainder is a Brownian
bridge, independent of the dyadic nodal values. Moreover, uniform
continuity of Brownian paths gives \(\|R_nW\|_\infty\to0\) almost
surely. We may therefore choose \(\varepsilon_n\downarrow0\) such
that
\[
\gamma_W\{\|R_nW\|_\infty<\varepsilon_n\} \longrightarrow1.
\]
For every \(x_0\in C_0([0,1])\), we also have
\(\|R_nx_0\|_\infty\to0\), while the increasing \(\sigma\)-algebras
\(\sigma(P_n)\) generate the Borel \(\sigma\)-algebra of
\(C_0([0,1])\).

We now follow the proof of Theorem~\ref{thm:HilbertGauss}, replacing
the orthogonal projections by \(P_n\). Fix \(B=B(x_0,r)\), choose
\(0<s<r\), and put \(\delta=(r-s)/3\) and \(B_s=B(x_0,s)\). For all
sufficiently large \(n\),
\[
\|y+z-x_0\|_\infty \leq\|y-P_nx_0\|_\infty
+\|z\|_\infty+\|R_nx_0\|_\infty <s+2\delta<r
\]
whenever \(y\in D_n:=B_{V_n}(P_nx_0,s+\delta)\) and
\(\|z\|_\infty<\varepsilon_n\).

Let \(\rho_n\) be the image of \((\gamma_W)_{B_s}\) under \(P_n\).
By Lemma~\ref{lem:LC-stability}, \(\rho_n\) is log-concave.
Moreover, independence gives \(\rho_n=\kappa_n\mu_n\), where
\(\mu_n\) is the image of \(\gamma_W\) under \(P_n\) and
\(0\leq\kappa_n\leq1/\gamma_W(B_s)\). Since \(\mu_n\) is a
nondegenerate Gaussian probability on \(V_n\), the measure
\(\rho_n\) is absolutely continuous and has a log-concave density
(see \cite{Borell} and \cite[p.~1917]{Bobkov}). The contractivity of
\(P_n\) also gives \(\rho_n(D_n)=1\).

Thus the finite-dimensional step in the proof of
Theorem~\ref{thm:HilbertGauss} applies with
Lemma~\ref{lem:normed-logPI} in place of
Proposition~\ref{prop:logPI}, using the \(\ell^\infty\)-norm on
\(V_n\). The constant is independent of \(\dim V_n=2^n\). The
averaging, martingale convergence and Fatou arguments are unchanged
and yield
\[
\|f-f_{B,\gamma_W}\|_ {L^{\Phi_0}(B,(\gamma_W)_B)} \preceq
r\int_Bg\,d(\gamma_W)_B.
\]
Together with (\ref{eq:LlogL-X-embedding}), this proves {\rm
(iii)}\(\Rightarrow\){\rm (i)}. The implication {\rm
(i)}\(\Rightarrow\){\rm (ii)} is immediate.

For {\rm (ii)}\(\Rightarrow\){\rm (iii)}, put
\(V(r)=\gamma_W(B(0,r))\) and \(\psi(r)=-\log V(r)\). The classical
exit-time expansion gives
\begin{equation*}
 V(r)=\frac4\pi\sum_{k=0}^{\infty}
\frac{(-1)^k}{2k+1} \exp\left(-\frac{(2k+1)^2\pi^2}{8r^2}\right)
\end{equation*}
(see \cite{BorodinSalminen,LiShao}). Consequently, with
\(a=\pi^2/8\),
\begin{equation}
\label{eq:Wienerasymp} V(r)=\frac4\pi e^{-a/r^2}
\left(1+O(e^{-8a/r^2})\right), \qquad r\downarrow0.
\end{equation}
In particular, \(V(2r)/V(r)\to\infty\), so \(\gamma_W\) is
non-doubling.

Since Wiener measure is Gaussian, Lemma~\ref{lem:LC-stability}(iii)
shows that \(\psi\) is convex, while (\ref{eq:Wienerasymp}) gives
\(\psi(r)\simeq r^{-2}\). Assuming {\rm (ii)},
Lemma~\ref{lem:smallball} therefore yields \(U_X(t)\preceq\log t\),
and Lemma~\ref{lem:LlogL-X} gives \(L\log L(0,1)\hookrightarrow X\).
\end{proof}

\section*{Declaration of generative AI and AI-assisted
technologies in the manuscript preparation process}

During the preparation of this manuscript, the author used ChatGPT
(OpenAI) and Claude (Anthropic) to assist with language editing,
manuscript organization, critical examination of mathematical
arguments, and bibliographic searches. The author reviewed and
revised the resulting suggestions and takes full responsibility for
the mathematical content, references, and final text.

\end{document}